\newcounter{satznum}
\newtheorem{theorem}{Theorem}[satznum]
\newcounter{cornum}
\newtheorem{corollary}{Corollary}[cornum]
\newcounter{defnum}
\newtheorem{definition}{Definition}[defnum]
\newcounter{lemmanum}
\newtheorem{lemma}{Lemma}[lemmanum]
\newcounter{propnum}
\newtheorem{proposition}{Proposition}[propnum]
\newenvironment{remark}
 {\begin{trivlist}\item[]{\bf Remark.}}
 {\end{trivlist}}
\gdef\me{{\mathbb E}} 
\gdef\nz{{\mathbb N}} 
\gdef\pr{{\mathbb P}} 
\gdef\rz{{\mathbb R}} 
\gdef\gz{{\mathbb Z}} 
\newcounter{todocounter}
\def\@MRExtract#1 #2!{#1}
\newcommand{\MR}[1]{
  \xdef\@MRSTRIP{\@MRExtract#1 !}
  \href{http://www.ams.org/mathscinet-getitem?mr=\@MRSTRIP}{MR\@MRSTRIP}}
\begin{document}
   \section*{ON THE GENEALOGY OF MULTI-TYPE CANNINGS MODELS AND THEIR LIMITING EXCHANGEABLE COALESCENTS}
   {\sc Maximilian Flamm and Martin M\"ohle}\footnote{Fachbereich Mathematik, Eberhard Karls Universit\"at T\"ubingen,
   Auf der Morgenstelle 10, 72076 T\"ubingen, Germany\\ E-mail addresses: maximilian.flamm@uni-tuebingen.de, martin.moehle@uni-tuebingen.de}
\begin{center}
   Date: \today\\
\end{center}
\begin{abstract}
   We study the multi-type Cannings population model. Each individual has a type belonging to a given at most countable type space $E$. The population is hence divided into $|E|$ subpopulations. The subpopulation sizes are assumed to be constant over the generations, whereas the number of offspring of type $\ell\in E$ of all individuals of type $k\in E$ is allowed to be random. Under a joint exchangeability assumption on the offspring numbers, the transition probabilities of the ancestral process of a sample of individuals satisfy a multi-type consistency property, paving a way to prove in the limit for large subpopulation sizes the existence of multi-type exchangeable coalescent processes via Kolmogorov's extension theorem. Integral representations for the infinitesimal rates of these multi-type exchangeable coalescents and some of their properties are studied. Examples are provided, among them multi-type Wright--Fisher models and multi-type pure mutation models. The results contribute to the foundations of multi-type coalescent theory and provide new insights into (the existence of) multi-type exchangeable coalescents.

   \vspace{2mm}

   \noindent Keywords: Consistency; exchangeability; integral representation; multi-type Cannings model; multi-type coalescent

   \vspace{2mm}

   \noindent 2020 Mathematics Subject Classification:
            Primary 60J90; 
            60J10 
            Secondary 92D15; 
            92D25 
\end{abstract}
\subsection{Introduction and model definition} \label{intro}
   Multi-type population models in the spirit of Cannings \cite{Cannings1974,Cannings1975,Cannings1976} are studied, where each individual is equipped with a certain type taken from a given type space $E$. It is assumed that $E$ is at most countable. The population is hence divided into $|E|$ subpopulations. It is furthermore assumed that the size $N_k\in\nz:=\{1,2,\ldots\}$ of each subpopulation $k\in E$ is deterministic and constant over the generations. We denote by $N:=(N_k)_{k\in E}\in\nz^E$ the vector of all subpopulation sizes. In the following the dynamics of the model is first described for a single generation step. For $k,\ell\in E$ and $i\in[N_k]:=\{1,\ldots,N_k\}$ let $\nu_{k,\ell,i}$ denote the (random) number of offspring of type $\ell\in E$ of the $i$-th individual of type $k\in E$. Clearly, $N_{k,\ell}:=\sum_{i\in[N_k]}\nu_{k,\ell,i}$ is the number of offspring of type $\ell\in E$ of all individuals of type $k\in E$. Note that $\sum_{k\in E}N_{k,\ell}$ is the total number of offspring of type $\ell\in E$, whereas $\sum_{\ell\in E}N_{k,\ell}$ is the number of offspring of all individuals of type $k\in E$. The assumption that all subpopulation sizes are constant over time puts the restrictions
   \begin{equation} \label{size1}
      \sum_{k\in E}N_{k,\ell}\ =\ N_\ell,\qquad \ell\in E,
   \end{equation}
   on the random variables $N_{k,\ell}$, $k,\ell\in E$. Eq.~(\ref{size1}) is the analog of Cannings \cite[Eq.~(30)]{Cannings1975} and H\"ossjer \cite[Eq.~(2.1)]{Hoessjer2011}. The subclass of models satisfying $N_{k,k}=N_k$ for all $k\in E$ or, equivalently, $N_{k,\ell}=0$ for all $k,\ell\in E$ with $k\ne\ell$, is studied in \cite{Moehle2024}. Our analysis is based on the assumption (compare also with \cite[p.~266]{Cannings1975}) that the offspring sizes are jointly exchangeable, i.e.,
   \begin{itemize}
      \item[(A)] for all permutations
         $\pi_{k,\ell}$ of $[N_k]$, $k,\ell\in E$, $(\nu_{k,\ell,\pi_{k,\ell}(i)})_{k,\ell\in E,i\in[N_k]}$ has the same distribution as $(\nu_{k,\ell,i})_{k,\ell\in E, i\in[N_k]}$.
   \end{itemize}
   Less general, as in \cite{Hoessjer2011}, one may assume that the offspring sizes are
   \begin{itemize}
      \item[(A1)] exchangeable in each subpopulation, i.e., for each $k\in E$ the random vectors $\nu_{k,i}:=(\nu_{k,\ell,i})_{\ell\in E}$, $i\in[N_k]$, are exchangeable and
      \item[(A2)] independent in different subpopulations, i.e., the $(\nu_{k,i})_{i\in[N_k]}$, $k\in E$, are independent.
   \end{itemize}
   Clearly, (A) implies (A1). If (A1) and (A2) are satisfied, then (A) holds as well, since independence implies exchangeability. If (A2) is satisfied, then, for each $\ell\in E$, the $N_{k,\ell}$, $k\in E$, are independent, which together with (\ref{size1}) implies that the $N_{k,\ell}$ are deterministic almost surely, which is the reason why in \cite{Hoessjer2011} it is assumed that the $N_{k,\ell}$ are deterministic. Our results however hold under the more general assumption (A). In particular, the $N_{k,\ell}$ are allowed to be random and all results in this article hold for random $N_{k,\ell}$ satisfying (\ref{size1}).

   Up to now the model is defined for a single generation step. The description of the model over the generations is rather simple. Offspring sizes in different generations, labeled with $r\in\gz:=\{\ldots,-1,0,1,\ldots\}$, are simply assumed to be independent and identically distributed (iid).

   Figure \ref{figure1} illustrates a realization of two consecutive generations of the model with type space $E=\{1,2,3\}$, subpopulation sizes $N=(N_1,N_2,N_3)=(4,6,5)$ and with the realization
   \[
   (N_{k,\ell}(\omega))_{k,\ell\in E}=
      \left(
      \begin{tabular}{ccc}
         3 & 2 & 1\\
         1 & 4 & 3\\
         0  & 0 & 1
      \end{tabular}
   \right).
   \]
   The offspring numbers can be read off from Figure \ref{figure1}. For example, the first individual of the first (blue) subpopulation has $\nu_{1,1,1}(\omega)=2$ offspring of type $1$ (blue), $\nu_{1,2,1}(\omega)=1$ offspring of type $2$ (green) and $\nu_{1,3,1}(\omega)=0$ offspring of type $3$ (red).
   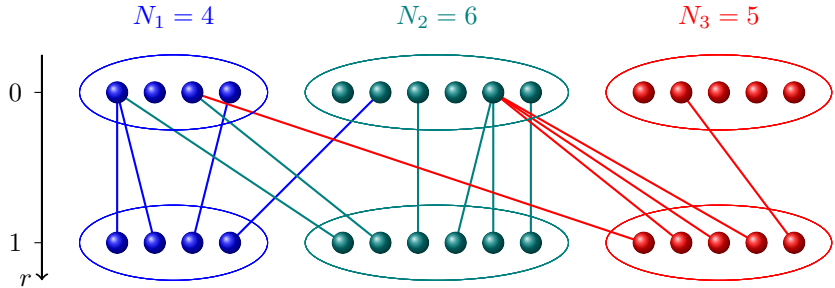
\begin{figure}[htpb]
      \caption[]{An illustration of two consecutive generations with three subpopulations}
      \centering
      \definecolor{myblue}{HTML}{92dcec}
\begin{tikzpicture}[scale=1.0]
   \def \N {10} 
   \def \r {1}
   \def \radius {0.15} 
   
   \draw[->,thick] (-0.5,2*\r+0.5) -- (-0.5,0-0.5) node[below,left] {$r$};
   
   \foreach \y in {0,...,\r}
      \draw (-0.6,2*\r-2*\y) -- (-0.5,2*\r-2*\y) node[left=4pt] {\y};
   
   \draw[thick,color=blue] (0.5,0) -- (0.5,2);
   \draw[thick,color=blue] (1,0) -- (0.5,2);
   \draw[thick,color=blue] (1.5,0) -- (2,2);
   \draw[thick,color=blue] (2,0) -- (4,2);
   
   \draw[thick,color=teal] (3.5,0) -- (0.5,2);
   \draw[thick,color=teal] (4,0) -- (1.5,2);
   \draw[thick,color=teal] (4.5,0) -- (4.5,2);
   \draw[thick,color=teal] (5,0) -- (5.5,2);
   \draw[thick,color=teal] (5.5,0) -- (5.5,2);
   \draw[thick,color=teal] (6,0) -- (6,2);

   \draw[thick,color=red] (7.5,0) -- (1.5,2);
   \draw[thick,color=red] (8,0) -- (5.5,2);
   \draw[thick,color=red] (8.5,0) -- (5.5,2);
   \draw[thick,color=red] (9,0) -- (5.5,2);
   \draw[thick,color=red] (9.5,0) -- (8,2);

   \draw[color=blue] (1.25,2*\r+1) node {\centering $N_1=4$};
   \foreach \n in {0,...,\r} \foreach \i in {1,...,4}
      {
      \draw[color=blue] (1.25,2*\n) ellipse (1.25cm and 5mm);
      \shade[ball color=blue] (\i/2,2*\n) circle (\radius);
      }
   \draw[color=teal] (4.75,2*\r+1) node {\centering $N_2=6$};
   \foreach \n in {0,...,\r} \foreach \i in {1,...,6}
      {
      \draw[color=teal] (4.75,2*\n) ellipse (1.75cm and 5mm);
      \shade[ball color=teal] (3+\i/2,2*\n) circle (\radius);
      }
   \draw[color=red] (8.5,2*\r+1) node {\centering $N_3=5$};
   \foreach \n in {0,...,\r} \foreach \i in {1,...,5}
      {
      \draw[color=red] (8.5,2*\n) ellipse (1.5cm and 5mm);
      \shade[ball color=red] (7+\i/2,2*\n) circle (\radius);
      }
\end{tikzpicture}
      \label{figure1}
   \end{figure}
   For $|E|=1$ and under (A), this model reduces to the classical single-type exchangeable population model introduced by Cannings \cite{Cannings1974, Cannings1975}.

   The paper is organized as follows. In Section \ref{ancestry} basic properties on the ancestry of multi-type Cannings models are derived. Section \ref{consistency} provides new multi-type consistency equations and discusses symmetry and exchangeability issues of multi-type Cannings models. Convergence results as all subpopulation sizes become large are provided in Section \ref{limits}. Sections \ref{coal} and \ref{intrep} deal with (the existence of) multi-type exchangeable coalescent processes and their integral representations. Two examples, the multi-type Wright--Fisher model and a multi-type pure mutation model, are extensively analysed in Section \ref{examples}. The proofs are provided in Sections \ref{proofs} except for the proof of the integral representation (Theorem \ref{main3}), which is -- because of its length -- provided in the separate Section \ref{proofmain3}. The article finishes with short appendix collecting some basic bounds, definitions, illustrations and results used throughout the article. The results extend those obtained in \cite{Moehle2024} and, hence, contribute to the foundations of multi-type coalescent theory.

   It is almost impossible to mention all relevant literature here. Precise citations are made at the appropriate places throughout the article. For recent related works on multi-type $\Lambda$-coalescents we refer the reader exemplary to Gonz\'alez et al. \cite{GonzalezCasanovaKurtNunezMoralesPerez2024} and Johnston, Kyprianou and Rogers \cite{JohnstonKyprianouRogers2023}.

\subsection{Ancestral structure} \label{ancestry}
   Before we will define the multi-type ancestral process, let us recall some basics on set partitions having labeled blocks, also called labeled set partitions. We shall also introduce some notations for random labeled partitions.
\subsubsection{Block labeled set partitions}
   The literature on labeled set partitions, also called colored, marked or typed set partitions, is sparse compared to the immense literature on standard set partitions. We refer the reader to Kallenberg \cite{Kallenberg2005} for probabilistic theory on marked partitions, to Goyt and Pudwell \cite{GoytPudwell2011,GoytPudwell2012} for some literature on the combinatorics of colored set partitions and to Alberti \cite{Alberti2024}, where a labeled partitioning process is studied. For $n\in\nz$ let ${\cal P}_n$ denote the space of partitions of $[n]$. Note that $|{\cal P}_n|=\sum_{j=1}^nS(n,j)$, where the $S(.,.)$ denote the Stirling numbers of the second kind, i.e., $S(n,j)$ is the number of partitions of $[n]$ having $j$ blocks. Any partition $\pi\in{\cal P}_n$ can be written as $\pi=\{B_1,\ldots,B_j\}$, where $B_1,\ldots,B_j$ are the (non-empty) blocks of $\pi$. The order of the blocks is not relevant, but usually the blocks are listed in order of appearance, i.e.~$1\in B_1$, $\min([n]\setminus B_1)\in B_2$ and so on. Given a partition $\{B_1,\ldots,B_j\}\in{\cal P}_n$ having $j$ blocks $B_1,\ldots,B_j$, one may equip each block $B_i$ with a type $k_i\in E$ leading to the partition $\{(B_1,k_1),\ldots,(B_j,k_j)\}$ having labeled (colored) blocks. Such a partition is called a block-labeled partition or simply a labeled (or colored, marked or typed) partition. Let ${\cal P}_{n,E}$ denotes the space of labeled partitions of $[n]$. If $d:=|E|<\infty$, then $|{\cal P}_{n,E}|=\sum_{j=1}^n d^j S(n,j)<\infty$. Alternatively, by Dobi\'nski's formula (see, for example, Eq.~(21) of Hsu and Shiue \cite{HsuShiue1998} or Mansour \cite[p.~384, Example 8.12]{Mansour2013}), $|{\cal P}_{n,E}|=e^{-d}\sum_{j\ge 0}d^jj^n/j!$ if $d<\infty$. For $\pi\in{\cal P}_{n,E}$ any block $(B,k)\in\pi$ is called a $k$-block of $\pi$.

   It is sometimes useful to delete all labels of a labeled partition. Formally, for $n\in\nz$, the function $r_n:{\cal P}_{n,E}\to{\cal P}_n$, defined via $r_n(\pi):=\{B_1,\ldots,B_j\}$ for all $\pi=\{(B_1,k_1),\ldots,(B_j,k_j)\}\in{\cal P}_{n,E}$, is called the ($n$-th) label removal function. The function $r_n$ simply maps each labeled partition to its non-labeled counterpart by removing all labels.

   For a permutation $\sigma\in S_n$ and a block $B\subseteq[n]$ define $\sigma(B):=\{\sigma(i):i\in B\}$ and for $\pi=\{(B_1,k_1),\ldots,(B_j,k_j)\}\in{\cal P}_{n,E}$ define $\sigma(\pi):=\{(\sigma(B_1),k_1),\ldots,(\sigma(B_j),k_j)\}\in{\cal P}_{n,E}$.
\subsubsection{Random labeled partitions}
   A random labeled partition of $[n]$ (with label space $E$) is a random variable $\Pi$ taking values in ${\cal P}_{n,E}$. The following notions of exchangeability are in the spirit of Aldous \cite{Aldous1985}, Kallenberg \cite[p.~343]{Kallenberg2005} and Pitman \cite{Pitman1995}. For a probability space $(\Omega,{\cal F},\pr)$ and $A\in{\cal F}$ let $\pr_A$ denote the restriction of $\pr$ to $A$ defined via $\pr_A(B):=\pr(B\cap A)$ for all $B\in{\cal F}$.
   \begin{definition}[Partial exchangeability] \label{partialexchangeability}
      Let $n\in\nz$, $G\subseteq S_n$ and $A\in{\cal F}$. A random labeled partition $\Pi$ of $[n]$ is called partially exchangeable with respect to $G$, if $\sigma(\Pi)\stackrel{d}{=}\Pi$ for all $\sigma\in G$, i.e., $\pr(\sigma(\Pi)=\pi)=\pr(\Pi=\pi)$ for all $\sigma\in G$ and all $\pi\in{\cal P}_{n,E}$. A random labeled partition $\Pi$ is called partially exchangeable with respect to $G$ on the event $A$, if $\pr_A(\sigma(\Pi)=\pi)=\pr_A(\Pi=\pi)$ for all $\sigma\in G$ and all $\pi\in{\cal P}_{n,E}$, where $\pr_A$ denotes the restriction of $\pr$ to $A$.
   \end{definition}
   \begin{remark}
      For $A=\Omega$, partial exchangeability with respect to $G$ on $A$ is the same as partial exchangeability with respect to $G$. If $\Pi$ is partially exchangeable with respect to $G$, then $\Pi$ is even partially exchangeable with respect to the group $\langle G\rangle$ generated by $G$. Partial exchangeability with respect to the full group $G=S_n$ of permutations of $[n]$ is simply called exchangeability. We will come back to exchangeability issues at the end of Section \ref{consistency}.
   \end{remark}

\subsubsection{Multi-type ancestral process}
Suppose that one has sampled $n\in[\sum_{k\in E}N_k]$ individuals from the current generation $0$. One may order these $n$ individuals (in some arbitrary way) and let $k_1,\ldots,k_n\in E$ denote their types. Looking $r\in\nz_0:=\{0,1,\ldots\}$ generations backward in time, the ancestry of the sample can be captured by defining a random labeled partition ${\cal A}_r={\cal A}_r^{(n,N)}$ of $[n]$ such that (by definition) $i,j\in[n]$ belong to the same $k$-block of ${\cal A}_r$ if and only if the individuals $i$ and $j$ have a common ancestor $r$ generations backward in time and this ancestor has type $k$. Note that ${\cal A}_r$ not only depends on the sample size $n$ and on the subpopulation sizes $N_k$, $k\in E$, but also on the distribution of the offspring numbers $\nu_{k,\ell,i}$, but this dependence is suppressed in our notation for simplicity. The process ${\cal A}:=({\cal A}_r)_{r\in\nz_0}$ is called a \emph{multi-type ancestral process} or, alternatively, a \emph{multi-type backward process} or a \emph{multi-type discrete $n$-coalescent process}. The assumption that offspring sizes in different generations are iid ensures that ${\cal A}$ is a homogeneous Markov chain (HMC) with state space ${\cal P}_{n,E}$ and initial state ${\cal A}_0=\{(\{1\},k_1),\ldots,(\{n\},k_n)\}$. In this case the initial state ${\cal A}_0$ is a deterministic labeled partition of $[n]$ into singletons. Note however, that we could have sampled from generation $0$ according to some different (even random) scheme, for example in such a way that ${\cal A}_0$ is an exchangeable random labeled partition. We will come back to the impact of the distribution of ${\cal A}_0$ at the end of Section \ref{consistency}. Let
   \begin{equation} \label{rtrans1}
      p_{\pi,\pi'}
      \ :=\ \pr({\cal A}_r=\pi'\,|\,{\cal A}_{r-1}=\pi),
      \qquad\pi,\pi'\in{\cal P}_{n,E},r\in\nz,
   \end{equation}
   denote the transition probabilities of ${\cal A}$. Note that $p_{\pi,\pi'}=p_{\pi,\pi'}^{(N)}$ depends on the subpopulation sizes $N=(N_k)_{k\in E}$, but this dependence is often suppressed in our notation. Transitions from $\pi\in{\cal P}_{n,E}$ to $\pi'\in{\cal P}_{n,E}$ are only possible if each block of $\pi'$ is a union of some blocks of $\pi$ (types of the blocks disregarded here). We write $\pi\subseteq\pi'$ in this case. For $x\in\rz$ and $n\in\nz_0$ let $(x)_n:=\prod_{i=0}^{n-1}(x-i)$ denote the descending factorials. Since, in each subpopulation, offspring to parents are randomly assigned (random assignment condition), it follows under (A) that the transition probability (\ref{rtrans1}) can be expressed in terms of the offspring variables $\nu_{k,\ell,s}$, $k,\ell\in E$, $s\in[j_k]$, via
   \begin{equation} \label{rtrans2}
      p_{\pi,\pi'}\ =\
      \frac{\prod_{k\in E}(N_k)_{j_k}}{\prod_{\ell\in E}(N_\ell)_{i_\ell}}
      \me\bigg(
      \prod_{k,\ell\in E}
            \prod_{s=1}^{j_k}(\nu_{k,\ell,s})_{i_{k,\ell,s}}
      \bigg),\qquad\pi,\pi'\in{\cal P}_{n,E},\pi\subseteq\pi',
   \end{equation}
   where $i_\ell$ and $j_k$ are the number of $\ell$-blocks of $\pi$ and $k$-blocks of $\pi'$ respectively and $i_{k,\ell,s}$, $k,\ell\in E$, $s\in[j_k]$, are the group sizes of $\ell$-blocks of $\pi$ merging to the $s$-th $k$-block of $\pi'$. A proof of (\ref{rtrans2}) is provided in Section \ref{proofs}. The structure of the transition matrix $P$ is illustrated exemplary for sample size $n=2$ in Section \ref{structure} in the appendix. Note that $\sum_{k\in E}\sum_{s=1}^{j_k}i_{k,\ell,s}=i_\ell$, $\ell\in E$. In particular, the transition matrix $P:=(p_{\pi,\pi'})_{\pi,\pi'\in{\cal P}_{n,E}}$ has diagonal entries
   \begin{equation}
      p_{\pi,\pi}\ =\ \me\bigg(\prod_{k\in E}\prod_{s=1}^{i_k}\nu_{k,k,s}\bigg),
      \qquad \pi\in{\cal P}_{n,E}.
   \end{equation}
   Consistency properties of the transition probabilities (\ref{rtrans2}) and symmetry properties of the ancestral process are deferred to Section \ref{consistency}.
%
   If $N_{k,\ell}=0$ for all $k\ne\ell$, then (\ref{rtrans2}) reduces to the transition probabilities \cite[Eq.~(9)]{Moehle2024} for multi-type Cannings models, where each offspring has the same type as its parent. For $E=\{1\}$, Eq.~(\ref{rtrans2}) reduces to the well-known formula for the transition probabilities of the discrete coalescent for single-type Cannings models (see, for example, \cite[Eq.~(3)]{MoehleSagitov2001})
   \begin{equation} \label{singletypetrans}
      p_{\pi,\pi'}
      \ =\ \frac{(N)_j}{(N)_i}\me\big((\nu_1)_{i_1}\cdots(\nu_j)_{i_j}\big),
   \end{equation}
   where $N$ $(:=N_1)$ is the total population size, $i$ and $j$ are the number of blocks of $\pi$ and $\pi'$ respectively, $\nu_s:=\nu_{1,1,s}$ for $s\in[N]$ and $i_1,\ldots,i_j$ are the group sizes of merging blocks of $\pi$. Note that $i_1+\cdots+i_j=i$.

   The coalescence probability that two individuals of the same type $\ell\in E$ share a common parent of type $k\in E$ one generation backward in time is
   \begin{equation} \label{coal1}
      \me\bigg(\sum_{i=1}^{N_k}\frac{(\nu_{k,\ell,i})_2}{(N_\ell)_2}\bigg)
      \ =\
      \frac{N_k}{(N_\ell)_2}\me((\nu_{k,\ell,1})_2)
      \ =\ \frac{N_k}{(N_\ell)_2}\me(\nu_{k,\ell,1}^2)-\frac{\me(N_{k,\ell})}{(N_\ell)_2}
      \ =:\ c_{k,\ell}(N_k,N_\ell),
   \end{equation}
   provided that $N_\ell>1$, in agreement with (\ref{rtrans2}) for $i_\ell:=2$, $i_s:=0$ for $s\in E\setminus\{\ell\}$, $j_k:=1$ and $j_s:=0$ for $s\in E\setminus\{k\}$.

   Similarly, for $k,\ell_1,\ell_2\in E$ with $\ell_1\ne\ell_2$, the coalescence probability that two individuals of different types $\ell_1$ and $\ell_2$ respectively share a common parent of type $k$ one generation backward in time is
   \begin{equation} \label{coal2}
      \me\bigg(\sum_{i=1}^{N_k}\frac{\nu_{k,\ell_1,i}}{N_{\ell_1}}
      \frac{\nu_{k,\ell_2,i}}{N_{\ell_2}}\bigg)
      \ =\
      \frac{N_k}{N_{\ell_1}N_{\ell_2}}\me(\nu_{k,\ell_1,1}\nu_{k,\ell_2,1})
      \ =:\ c_{k,\ell_1,\ell_2}(N_k,N_{\ell_1},N_{\ell_2}),
   \end{equation}
   again in agreement with (\ref{rtrans2}) for $i_{\ell_1}:=i_{\ell_2}:=1$,
   $i_s:=0$ for $s\in E\setminus\{\ell_1,\ell_2\}$, $j_k:=1$ and $j_s:=0$ for $s\in E\setminus\{k\}$. We call $c_{k,k}(N_k,N_k)$, $k\in E$, the \emph{diagonal coalescence probabilities} and all the other coalescence probabilities the \emph{off-diagonal coalescence probabilities}. There are $d:=|E|$ diagonal coalescence probabilities and $(d)_2+d(d)_2=d^3-d$ off-diagonal coalescence probabilities, thus, altogether $d^3$ coalescence probabilities.

   For $r\in\nz_0$ and $k\in E$ let $Y_{r,k}$ denote the number of $k$-blocks of ${\cal A}_r$ and define $Y_r:=(Y_{r,k})_{k\in E}$. Using an argument in the spirit of Burke and Rosenblatt \cite{BurkeRosenblatt1958} it is readily seen that $Y:=(Y_r)_{r\in\nz_0}$, called the (multi-type) block counting process of ${\cal A}$, is a HMC with (partially ordered) state space $\nz_0^E$. Let $p_{i,j}:= \pr(Y_r=j\,|\,Y_{r-1}=i)$, $i,j\in\nz_0^E$, $r\in\nz$, denote the transition probabilities of $Y$. From (\ref{rtrans2}) it follows that, for all $i,j\in\nz_0^E$ with $i\ge j$ (componentwise),
   \[
   p_{i,j}
   \ =\ \frac{\prod_{k\in E}\binom{N_k}{j_k}}{\prod_{\ell\in E}\binom{N_\ell}{i_\ell}}
   \sum_{}\me\bigg(
   \prod_{k,\ell\in E}\prod_{s=1}^{j_k}\binom{\nu_{k,\ell,s}}{i_{k,\ell,s}}
   \bigg),
   \]
   where the sum $\sum$ extents over all $i_{k,\ell,s}\in\nz_0$, $k,\ell\in E$, $s\in[j_k]$, with $\sum_{k\in E}\sum_{s=1}^{j_k}i_{k,\ell,s}=i_\ell$ for all $\ell\in E$. It is often useful to study the $\nz_0^E$-valued process $Y$ before considering the full partition-valued process $\Pi$.
\subsection{Consistency and symmetry} \label{consistency}
Single-type Cannings models satisfy a fundamental consistency property (see Eq.~(\ref{singletypeconsis}) below) being crucial for the analysis (of the ancestral structure) of these models. Proposition \ref{consisprop} below shows that multi-type Cannings models satisfy a similar but more involved multi-type consistency property. In order to state the result, it is useful to introduce matrices of the form $T:=(t_{k,\ell})_{k,\ell\in E}$, where each entry $t_{k,\ell}$ of $T$ is a (possibly empty) vector of the form $t_{k,\ell}:=(i_{k,\ell,s})_{s\in[j_k]}$ with $j_k\in\nz_0$ for all $k\in E$ and $i_{k,\ell,s}\in\nz_0$ for all $k,\ell\in E$ and all $s\in[j_k]$ satisfying $i_\ell:=\sum_{k\in E}\sum_{s=1}^{j_k}i_{k,\ell,s}\le N_\ell$ for all $\ell\in E$. If $j_k=0$ then $t_{k,\ell}=()=:\mathbf{0}\in\rz^0$ is the empty vector (neutral and only element of $\rz^0$) for every $\ell\in E$. The symbol $T$ is used since $T$ is a tensor. With this notation, the transition probability (\ref{rtrans2}) is of the form 
\begin{equation} \label{Phi}
   p_{\pi,\pi'}\ =\ \Phi_j(T)\quad\mbox{with}\quad\Phi_j(T)\ :=\ \Phi_j^{(N)}(T)\ :=\
   \frac{\prod_{k\in E}(N_k)_{j_k}}{\prod_{\ell\in E}(N_\ell)_{i_\ell}}
   \me\bigg(
      \prod_{k,\ell\in E}\prod_{s=1}^{j_k}(\nu_{k,\ell,s})_{i_{k,\ell,s}}
   \bigg),
\end{equation}
where $j:=(j_k)_{k\in E}$ and $T$ is the tensor defined above. Note that, if $j=e_k$ is the $k$-th unit vector in $\rz^E$, then
\[
\Phi_{e_k}(T)\ =\ \frac{N_k}{\prod_{\ell\in E}(N_\ell)_{i_{k,\ell,1}}}
\me\bigg(\prod_{\ell\in E}(\nu_{k,\ell,1})_{i_{k,\ell,1}}\bigg),
\qquad k\in E.
\]
In particular, the coalescence probability (\ref{coal1}) has the form $c_{k,\ell}(N_k,N_\ell)=\Phi_{e_k}(T)$, where $T:=(t_{k',\ell'})_{k',\ell'\in E}$ is the tensor with entries $t_{k',\ell'}:=(2)$ if $k'=k$ and $\ell'=\ell$, $t_{k',\ell'}:=(0)=\mathbf{0}\in\rz^1$ if $k'=k$ and $\ell'\ne\ell$ and $t_{k',\ell'}:=()=\mathbf{0}\in\rz^0$ otherwise. Similarly, the coalescence probability (\ref{coal2}) has the form $c_{k,\ell_1,\ell_2}(N_k,N_{\ell_1},N_{\ell_2})=\Phi_{e_k}(T)$, where $T:=(t_{k',\ell'})_{k',\ell'\in E}$ is the tensor with entries $t_{k',\ell'}:=(1)$ if $k'=k$ and $\ell'\in\{\ell_1,\ell_2\}$, $t_{k',\ell'}:=(0)=\mathbf{0}\in\rz^1$ if $k'=k$ and $\ell'\notin\{\ell_1,\ell_2\}$ and $t_{k',\ell'}:=()=\mathbf{0}\in\rz^0$ otherwise.

In the following, for given $j=(j_k)_{k\in E}\in\nz_0^E$, the space of all tensors $T=(t_{k,\ell})_{k,\ell\in E}$ with $t_{k,\ell}=(i_{k,\ell,s})_{s\in[j_k]}$, where the entries $i_{k,\ell,s}\in\nz_0$ satisfy $i_\ell:=\sum_{k\in E}\sum_{s=1}^{j_k}i_{k,\ell,s}\le N_\ell$ for all $\ell\in E$, is denoted by ${\cal T}_j$. Note that ${\cal T}_j={\cal T}_j^{(N)}$ depends on the subpopulation sizes $N:=(N_k)_{k\in E}$ and that $\Phi_j(T)$ is well defined for all $T\in{\cal T}_j$. Thus, $\Phi_j$ is a function from ${\cal T}_j$ to $[0,1]$. The following proposition provides full information on the consistency property of multi-type Cannings models. Despite the fact that its proof, provided in Section \ref{proofs}, is rather short, this consistency property turns out to be crucial for essentially all what follows.
\begin{proposition}[Multi-type consistency] \label{consisprop}
   Under (A), the functions $\Phi_j:{\cal T}_j\to[0,1]$, $j:=(j_k)_{k\in E}\in\nz_0^E$, are consistent in the following sense. For all $j=(j_k)_{k\in E}\in\nz_0^E$ and all tensors $T\in{\cal T}_j$, the equality
   \begin{equation} \label{consis}
      \Phi_j(T)\ =\ \sum_{k\in E}\Phi_{j+e_k}\big(T(k,\ell)\big)
      +\sum_{k\in E}\sum_{s=1}^{j_k}\Phi_j\big(T(k,\ell,s)\big)
   \end{equation}
   holds for each $\ell\in E$ with $i_\ell:=\sum_{k\in E}\sum_{s\in[j_k]}i_{k,\ell,s}<N_\ell$, where $e_k$ denotes the \mbox{$k$-th} unit vector in $\rz^E$, the tensor $T(k,\ell)$ is obtained from $T$ by replacing the (possibly empty) vector $t_{k,\ell}=(i_{k,\ell,1},\ldots,i_{k,\ell,j_k})$ by $(i_{k,\ell,1},\ldots,i_{k,\ell,j_k},1)$ and the (possibly empty) vector $t_{k,\ell'}=(i_{k,\ell',1},\ldots,i_{k,\ell',j_k})$ by $(i_{k,\ell',1},\ldots,i_{k,\ell',j_k},0)$ for all $\ell'\ne\ell$, and the tensor $T(k,\ell,s)$ is obtained from $T$ by replacing the single entry $i_{k,\ell,s}$ by $i_{k,\ell,s}+1$.
\end{proposition}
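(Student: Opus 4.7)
The plan is to start from the explicit formula
\[
\Phi_j(T) \;=\; \frac{\prod_{k\in E}(N_k)_{j_k}}{\prod_{\ell'\in E}(N_{\ell'})_{i_{\ell'}}}\,\me(X),\qquad X\ :=\ \prod_{k',\ell'\in E}\prod_{s'=1}^{j_{k'}}(\nu_{k',\ell',s'})_{i_{k',\ell',s'}},
\]
and expand the two sums on the right-hand side of (\ref{consis}) in terms of $X$ via the descending-factorial identity $(x)_{m+1}=(x)_m(x-m)$. Fix $\ell\in E$ with $i_\ell<N_\ell$. Both sums will naturally produce the common prefactor $\prod_{k}(N_k)_{j_k}\big/\big((N_\ell-i_\ell)\prod_{\ell'}(N_{\ell'})_{i_{\ell'}}\big)$, so the whole identity reduces to an equality of expectations which I will verify by recognizing a telescoping of $\nu$-terms and invoking the size constraint (\ref{size1}).

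First I would unfold the term $\Phi_{j+e_k}(T(k,\ell))$. Passing from $j$ to $j+e_k$ multiplies $(N_k)_{j_k}$ by $(N_k-j_k)$; passing from $i_\ell$ to $i_\ell+1$ multiplies the denominator $(N_\ell)_{i_\ell}$ by $(N_\ell-i_\ell)$; and the new block contributes a single factor $\nu_{k,\ell,j_k+1}$ to the integrand (the added $\ell'\neq\ell$ slots give $(\nu_{k,\ell',j_k+1})_0=1$). Summing over $k$ gives a multiple of $\me\bigl(X\cdot\sum_{k}(N_k-j_k)\nu_{k,\ell,j_k+1}\bigr)$. Here the main trick enters: assumption (A) implies exchangeability among the indices $s\in[N_k]$ within each subpopulation $k$, so
\[
(N_k-j_k)\,\me(X\,\nu_{k,\ell,j_k+1})\;=\;\me\Bigl(X\sum_{s=j_k+1}^{N_k}\nu_{k,\ell,s}\Bigr),
\]
because $X$ only involves the variables with $s'\le j_{k'}$. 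Summing over $k$ and using (\ref{size1}) in the form $\sum_{k\in E}\sum_{s=1}^{N_k}\nu_{k,\ell,s}=N_\ell$, the first sum becomes (up to the prefactor) $\me\bigl(X(N_\ell-S)\bigr)$ with $S:=\sum_{k\in E}\sum_{s=1}^{j_k}\nu_{k,\ell,s}$.

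Next I would treat the second sum in (\ref{consis}). Incrementing $i_{k,\ell,s}$ by $1$ leaves all $(N_{k'})_{j_{k'}}$ untouched, multiplies $(N_\ell)_{i_\ell}$ by $(N_\ell-i_\ell)$, and by $(\nu)_{m+1}=(\nu)_m(\nu-m)$ inserts the factor $\nu_{k,\ell,s}-i_{k,\ell,s}$ into the integrand. Summing over $k\in E$ and $s\in[j_k]$ gives (with the same prefactor) $\me\bigl(X\cdot(S-i_\ell)\bigr)$, since $\sum_{k,s\le j_k}i_{k,\ell,s}=i_\ell$. Adding the two contributions, the random variable $S$ cancels and the combined integrand collapses to $X\cdot(N_\ell-i_\ell)$. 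The deterministic factor $N_\ell-i_\ell$ then cancels the $(N_\ell-i_\ell)$ hidden in the prefactor, restoring precisely $\Phi_j(T)$.

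The only non-routine step is the exchangeability manipulation in the first sum: one has to check that the dummy variable $\nu_{k,\ell,j_k+1}$ appearing in the "new block" term really has the same joint law with $X$ as each of $\nu_{k,\ell,j_k+2},\ldots,\nu_{k,\ell,N_k}$. This is immediate from (A), since $X$ is invariant under any permutation $\pi_{k,\ell}$ of $[N_k]$ that fixes $\{1,\ldots,j_k\}$ pointwise, so one may freely average the factor $\nu_{k,\ell,j_k+1}$ over $s\in\{j_k+1,\ldots,N_k\}$ without changing the expectation. Everything else is algebraic bookkeeping with descending factorials and the conservation law (\ref{size1}).
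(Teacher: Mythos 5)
Your proposal is correct and is essentially the paper's own argument: the paper likewise writes $\Phi_j(T)=\frac{A}{B}\me(X)$, uses (A) to replace $(N_k-j_k)\me(X\nu_{k,\ell,j_k+1})$ by $\me\bigl(X\sum_{s=j_k+1}^{N_k}\nu_{k,\ell,s}\bigr)$, applies the descending-factorial step for the $T(k,\ell,s)$ terms, and invokes (\ref{size1}) so that everything collapses to $(N_\ell-i_\ell)\me(X)$. The only difference is direction of presentation (the paper expands $\me\bigl(X(N_{k,\ell}-i_{k,\ell})\bigr)$ and sums over $k$, while you expand the right-hand side of (\ref{consis})), which is immaterial.
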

\begin{remark}
   In particular, the right-hand side of (\ref{consis}) takes the same value for each $\ell\in E$ with $i_\ell<N_\ell$, which is a-priori not obvious. For $j=(0)_{k\in E}$ (null vector), Eq.~(\ref{consis}) reduces to the normalizing condition
   \begin{equation} \label{consis0}
      1\ =\ \sum_{k\in E}\Phi_{e_k}\big(T(k,\ell)\big),\qquad \ell\in E,
   \end{equation}
   where $T(k,\ell)$ is the tensor with entries $t_{k,\ell}:=(1)$, $t_{k,\ell'}:=(0)$ for $\ell'\ne\ell$ and $t_{k',\ell'}:=()$ (empty vector) otherwise. Eq.~(\ref{consis0}) is easily seen as follows. For all $k,\ell\in E$ we have $\Phi_{e_k}(T(k,\ell))=(N_k/N_\ell)\me(\nu_{k,\ell,1})=\me(N_{k,\ell})/N_\ell$. Summing over all $k\in E$ and taking (\ref{size1}) into account yields (\ref{consis0}).

   For $E=\{1\}$ and $j:=j_1\in\nz$, Eq.~(\ref{consis}) reduces to the consistency equation for single-type Cannings models (see, for example, \cite[Eq.~(3) and (4)]{Moehle2002})
   \begin{equation} \label{singletypeconsis}
      \Phi_j(i_1,\ldots,i_j)\ =\ \Phi_{j+1}(i_1,\ldots,i_j,1)+
      \sum_{s=1}^j \Phi_j(i_1,\ldots,i_{s-1},i_s+1,i_{s+1},\ldots,i_j),
   \end{equation}
   $i_1,\ldots,i_j\in\nz$ with $i_1+\cdots+i_j<N$ ($:=N_1$). The multi-type consistency property provided in Proposition \ref{consisprop} is more involved than its single-type counterpart (\ref{singletypeconsis}) in the sense that it cannot be derived from (\ref{singletypeconsis}) alone. For $E=\{1\}$, the normalizing condition (\ref{consis0}) reduces to $\Phi_1(1)=1$. Note that the same consistency relation (\ref{singletypeconsis}) holds for all $i_1,\ldots,i_j\in\nz$ if $\Phi_j$ is the exchangeable partition probability function (EPPF) of an infinite exchangeable random partition (see, for example, Pitman \cite[Eq.~(2.9)]{Pitman2006}).
\end{remark}
As in the single-type case, multi-type consistency has fundamental consequences, two of the probably most important of them provided in Corollary \ref{monotonecorollary} and Corollary \ref{naturalcoupling} below.

Let $j,j'\in\nz_0^k$ with $j\le j'$ (componentwise) and $T\in{\cal T}_j$, $T'\in{\cal T}_{j'}$ be two tensors. Recall that $T=(t_{k,\ell})_{k,\ell\in E}$ with $t_{k,\ell}=(i_{k,\ell,s})_{s\in[j_k]}$ and, similarly, $T'=(t'_{k,\ell})_{k,\ell\in E}$ with $t'_{k,\ell}=(i'_{k,\ell,s})_{s\in[j_k']}$. We say that $T\le T'$ if $i_{k,\ell,s}\le i'_{k,\ell,s}$ for all $k,\ell\in E$ and all $s\in[j_k]$. The following monotonicity property is a direct consequence of the consistency. Again, its proof is provided in Section \ref{proofs}.
\begin{corollary}[Monotonicity] \label{monotonecorollary}
   Under (A), the functions $\Phi_j:{\cal T}_j\to [0,1]$, $j\in\nz_0^E$, are monotone in the sense that
   \begin{equation} \label{monotone}
      \Phi_{j'}(T')\ \le\ \Phi_j(T)
   \end{equation}
   for all $j,j'\in\nz_0^E$ with $j\le j'$ and all tensors $T\in{\cal T}_j$ and $T'\in{\cal T}_{j'}$ with $T\le T'$.
\end{corollary}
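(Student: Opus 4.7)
The plan is to derive (\ref{monotone}) from the multi-type consistency equation (\ref{consis}) by iterating one-step inequalities. Every summand on the right-hand side of (\ref{consis}) is nonnegative, being itself a quantity of the form $\Phi_{j''}(T'')$ which lies in $[0,1]$ by (\ref{Phi}). Because these nonnegative summands add to $\Phi_j(T)$, each of them is bounded above by $\Phi_j(T)$. Fixing any $\ell \in E$ with $i_\ell < N_\ell$, this yields two families of elementary inequalities: $\Phi_{j+e_k}(T(k,\ell)) \le \Phi_j(T)$ for each $k \in E$ (an ``elementary addition'' of a new $k$-block whose unique nonzero entry is a $1$ in column $\ell$) and $\Phi_j(T(k,\ell,s)) \le \Phi_j(T)$ for each $k \in E$ and $s \in [j_k]$ (an ``elementary increment'' of the single entry $i_{k,\ell,s}$ by one).

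I would then establish (\ref{monotone}) by induction on the total defect $\sum_{k\in E} (j'_k - j_k) + \sum_{k,\ell\in E}\sum_{s\in [j_k]} (i'_{k,\ell,s} - i_{k,\ell,s})$. If the defect vanishes then $(j, T) = (j', T')$ and (\ref{monotone}) is trivial. Otherwise, I would exhibit a single elementary step taking $(j, T)$ to a new pair $(j'', T'')$ that still satisfies $j'' \le j'$ and $T'' \le T'$ but has strictly smaller defect; combining the one-step bound with the inductive hypothesis yields $\Phi_{j'}(T') \le \Phi_{j''}(T'') \le \Phi_j(T)$. If some entry $i_{k,\ell,s}$ with $s \in [j_k]$ lags strictly behind $i'_{k,\ell,s}$, an elementary increment at this triple $(k,\ell,s)$ does the job; otherwise some $j_k$ is strictly less than $j'_k$, and I would select a column $\ell^\ast$ with $i'_{k, \ell^\ast, s} \ge 1$ for the next new block index $s \in \{j_k + 1, \ldots, j'_k\}$ of $T'$ and apply an elementary addition at $\ell = \ell^\ast$. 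In either case the side condition $i_\ell < N_\ell$ required to invoke Proposition~\ref{consisprop} holds, because the current tensor sits entrywise below $T'$ and the coordinate $\ell$ at which I act has strict room to grow toward $i'_\ell \le N_\ell$.

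I expect the main technical nuisance to be the bookkeeping of block positions: an elementary addition necessarily places the new $k$-block at the next available index $j_k + 1$, which need not coincide with the index the target block occupies in $T'$. This is dissolved by the observation that assumption (A) makes the joint distribution of $(\nu_{k,\ell,s})_{k,\ell\in E, s\in[N_k]}$ invariant under permutations of the $s$-index within each fixed $k$, and hence by (\ref{Phi}) the function $\Phi_j$ is symmetric in $s$ inside each $k$-row of $T$; the blocks of $T'$ may therefore be freely relabelled to match the order in which the elementary additions produce them. A minor caveat is that the construction requires each new block of $T'$ to carry at least one positive entry; this is automatic whenever $T'$ arises from a genuine labelled partition, which is the setting in which the corollary is subsequently applied.
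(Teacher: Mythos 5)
Your route is essentially the paper's: from the consistency equation (\ref{consis}) and the nonnegativity of all its summands you extract the two one-step bounds $\Phi_{j+e_k}(T(k,\ell))\le\Phi_j(T)$ and $\Phi_j(T(k,\ell,s))\le\Phi_j(T)$ (valid whenever $i_\ell<N_\ell$), and you then climb from $(j,T)$ to $(j',T')$ by elementary increments and elementary block additions, using the symmetry (\ref{phisym}) to dispose of the position of newly created blocks and verifying the side condition $i_\ell<N_\ell$ exactly where it is needed. This is the paper's argument (first the case $j=j'$ by iterating (\ref{consis}), then induction on $\sum_{k\in E}(j'_k-j_k)$) written out in detail.

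The one step that fails as written is your choice of induction quantity. The scalar defect $\sum_{k\in E}(j'_k-j_k)+\sum_{k,\ell\in E}\sum_{s\in[j_k]}(i'_{k,\ell,s}-i_{k,\ell,s})$ need not strictly decrease under an elementary addition: the first sum drops by one, but the second sum acquires the new index $s=j_k+1$ and with it the deficit $\sum_{\ell\in E}i'_{k,\ell,j_k+1}-1$, so the net change is $\sum_{\ell\in E}i'_{k,\ell,j_k+1}-2\ge 0$ as soon as the matched new block of $T'$ has total size at least two (the typical case, e.g.\ all entries at least $2$ as in Proposition \ref{int1prop}). Precisely at the moment an addition is forced (all shared entries already agree with $T'$), no admissible elementary step strictly decreases this defect, so the inductive hypothesis cannot be invoked as you state it. The repair is immediate and is what the paper does: induct on the number of missing blocks $\sum_{k\in E}(j'_k-j_k)$ and, for each fixed block count, on the remaining entry deficits (equivalently, order the pairs lexicographically); with this change your argument is complete. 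Finally, your closing caveat is necessary rather than cosmetic: if a new block of $T'$ were allowed to be identically zero the inequality can genuinely fail, e.g.\ in the single-type case $\Phi_2(2,0)=\me((\nu_{1,1,1})_2)=(N_1-1)\,\Phi_1(2)>\Phi_1(2)$ for $N_1>2$, so the corollary must be read for tensors whose blocks each carry at least one positive entry — a restriction the paper's own proof uses implicitly as well.
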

For $m,n\in\nz$ with $m\le n$ let $\varrho_{n,m}:{\cal P}_{n,E}\to{\cal P}_{m,E}$ denote the natural restriction from ${\cal P}_{n,E}$ to ${\cal P}_{m,E}$ defined via
\begin{equation} \label{restriction}
   \varrho_{n,m}(\pi)
   \ :=\ \{(B_i\cap[m],k_i):1\le i\le j,B_i\cap[m]\ne\emptyset\}
\end{equation}
for all $\pi=\{(B_1,k_1),\ldots,(B_j,k_j)\}\in{\cal P}_{n,E}$. The following corollary shows that multi-type Cannings models satisfy the natural coupling property. For general information on the natural coupling property of the Kingman $n$-coalescent we refer the reader to Section 7 of \cite{Kingman1982a}.
\begin{corollary}[Natural coupling] \label{naturalcoupling}
   Under (A), for all $m,n\in\nz$ with $m\le n$, the restricted process $(\varrho_{n,m}\circ{\cal A}_r^{(n)})_{r\in\nz_0}$ has the same distribution as the multi-type ancestral process $({\cal A}_r^{(m)})_{r\in\nz_0}$.
\end{corollary}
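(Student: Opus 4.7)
The plan is to verify that the process $(\varrho_{n,m}\circ{\cal A}_r^{(n)})_{r\in\nz_0}$ is a homogeneous Markov chain whose initial state and one-step transition probabilities coincide with those of $({\cal A}_r^{(m)})_{r\in\nz_0}$. The initial states agree deterministically, since $\varrho_{n,m}$ sends $\{(\{i\},k_i):i\in[n]\}$ to $\{(\{i\},k_i):i\in[m]\}={\cal A}_0^{(m)}$. The heart of the matter is the lumpability identity
\[
\sum_{\pi'\in{\cal P}_{n,E},\,\varrho_{n,m}(\pi')=\sigma'}p_{\pi,\pi'}^{(N)}\ =\ p_{\varrho_{n,m}(\pi),\sigma'}^{(N)}\qquad(\star)
\]
for every $\pi\in{\cal P}_{n,E}$ and every $\sigma'\in{\cal P}_{m,E}$, where the LHS refers to the $n$-ancestral and the RHS to the $m$-ancestral transition probabilities. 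Once $(\star)$ is established, the Markov property and distributional equality on finite-dimensional marginals follow by a standard induction on the number of time steps that factors out the terminal one-step sum.

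By the telescoping $\varrho_{n,m}=\varrho_{m+1,m}\circ\varrho_{m+2,m+1}\circ\cdots\circ\varrho_{n,n-1}$, it suffices to prove $(\star)$ for $n=m+1$. Fix $\pi\in{\cal P}_{m+1,E}$, set $\sigma:=\varrho_{m+1,m}(\pi)$, and let $j=(j_k)_{k\in E}$ and $T\in{\cal T}_j$ encode the transition $\sigma\to\sigma'$ as in \eqref{Phi}, so that $p_{\sigma,\sigma'}^{(N)}=\Phi_j(T)$. If $\{m+1\}$ lies in a non-singleton block of $\pi$, then the unique $\pi'\supseteq\pi$ restricting to $\sigma'$ adjoins $m+1$ to the unique block of $\sigma'$ that contains the block of $\sigma$ holding $m+1$. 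The tensor of $\pi\to\pi'$ coincides entry-by-entry with $T$, so \eqref{Phi} immediately gives $p_{\pi,\pi'}^{(N)}=\Phi_j(T)$, yielding $(\star)$ in this case.

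In the substantive case, $(\{m+1\},\ell^*)$ is a singleton block of $\pi$ for some $\ell^*\in E$. The admissible lifts $\pi'\supseteq\pi$ with $\varrho_{m+1,m}(\pi')=\sigma'$ split into two families: either $\{m+1\}$ is merged in $\pi'$ into a $k$-block of $\sigma'$ (one choice for each $k\in E$ and each $s\in[j_k]$), or $\{m+1\}$ remains a singleton $k$-block of $\pi'$ of some type $k\in E$, which $\varrho_{m+1,m}$ subsequently deletes. Reading off the tensors one obtains exactly $T(k,\ell^*,s)$ and $T(k,\ell^*)$ respectively in the notation of Proposition \ref{consisprop}, so the LHS of $(\star)$ becomes
\[
\sum_{k\in E}\sum_{s=1}^{j_k}\Phi_j\bigl(T(k,\ell^*,s)\bigr)+\sum_{k\in E}\Phi_{j+e_k}\bigl(T(k,\ell^*)\bigr),
\]
which by \eqref{consis} applied with $\ell=\ell^*$ collapses to $\Phi_j(T)=p_{\sigma,\sigma'}^{(N)}$, provided $i_{\ell^*}<N_{\ell^*}$.

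The main obstacle is justifying this strict inequality. The number of $\ell^*$-blocks of $\pi$ equals $i_{\ell^*}+1$ (the singleton $\{m+1\}$ being an extra $\ell^*$-block beyond those of $\sigma$), and since distinct $\ell^*$-blocks correspond to distinct type-$\ell^*$ individuals one generation earlier, we must have $i_{\ell^*}+1\le N_{\ell^*}$, i.e.\ $i_{\ell^*}<N_{\ell^*}$, whenever the row $p_{\pi,\cdot}^{(N)}$ is not identically zero. Any $\pi$ violating this bound produces a vanishing descending factorial $(N_{\ell^*})_{i_{\ell^*}+1}$ in the formula \eqref{Phi} for $p_{\pi,\pi'}^{(N)}$, forcing both sides of $(\star)$ to vanish, so the identity holds in full generality and the corollary follows.
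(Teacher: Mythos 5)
Your proof is correct and follows essentially the same route as the paper: reduce to $n=m+1$, establish the lumpability identity by splitting on whether $\{m+1\}$ (resp.\ $\{n\}$) forms a singleton block, and collapse the sum over admissible lifts via the multi-type consistency relation (\ref{consis}), which is exactly the paper's Burke--Rosenblatt-style argument. You even make explicit the side condition $i_{\ell^*}<N_{\ell^*}$ (automatic for states of positive probability, since each $\ell^*$-block corresponds to a distinct type-$\ell^*$ individual) which the paper leaves implicit.
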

We now turn to symmetry and exchangeability properties of multi-type Cannings models. For $n\in\nz$ let $S_n$ be the set of permutations of $[n]$. Each permutation $\sigma\in S_n$ acts on $\rz^n$ via $\sigma x:=(x_{\sigma (1)},\ldots,x_{\sigma(n)})$ for all $x=(x_1,\ldots,x_n)\in\rz^n$. This definition is naturally extended to $n=0$ by assuming that $S_0$ contains only one particular permutation $\sigma_0$ acting on the empty vector $()\in\rz^0$ via $\sigma_0():=()$.

Let $j=(j_k)_{k\in E}\in\nz_0^E$. The definition of the function $\Phi_j$ (see (\ref{Phi})) and (A) imply that $\Phi_j$ is symmetric in the following sense. For all tensors $T=(t_{k,\ell})_{k,\ell\in E}\in{\cal T}_j$ and all permutations $\sigma_{k,\ell}\in S_{j_k}$, $k,\ell\in E$,
\begin{equation} \label{phisym}
   \Phi_j(T)\ =\ \Phi_j(\sigma(T)),
\end{equation}
where $\sigma:=(\sigma_{k,\ell})_{k,\ell\in E}$ and the tensor $\sigma(T)\in{\cal T}_j$ is defined via $\sigma(T):=(\sigma_{k,\ell}t_{k,\ell})_{k,\ell\in E}$.

The distribution of the ancestral process $({\cal A}_r)_{r\in\nz_0}$ clearly does not only depend on the transition probabilities (\ref{Phi}) but also on the distribution of ${\cal A}_0$. Lemma \ref{exchangelemma} below shows that the ancestral process is exchangeable, if one assumes that the individuals from generation $0$ are sampled in such a way that ${\cal A}_0$ is an exchangeable random labeled partition. For example, one may sample all $n$ individuals from a single subpopulation or one may sample the $n$ individuals in an exchangeable manner such that their random types are exchangeable $E$-valued random variables. The proof of Lemma \ref{exchangelemma} is provided on Section \ref{proofs}.
\begin{lemma}[Exchangeability] \label{exchangelemma}
   If ${\cal A}_0$ is exchangeable then ${\cal A}_r$ is exchangeable for all $r\in\nz_0$.
\end{lemma}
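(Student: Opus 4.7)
The plan is to proceed by induction on $r\in\nz_0$. The base case $r=0$ is given by assumption, so the work lies in the inductive step: assuming ${\cal A}_{r-1}$ is exchangeable, deduce that ${\cal A}_r$ is exchangeable. The natural tool is the homogeneous Markov chain structure of ${\cal A}=({\cal A}_r)_{r\in\nz_0}$ together with the transition probabilities $p_{\pi,\pi'}$ in the form (\ref{Phi}).

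The key intermediate step I would establish is the symmetry
\[
   p_{\sigma(\pi),\sigma(\pi')}\ =\ p_{\pi,\pi'}\qquad\mbox{for all }\sigma\in S_n,\ \pi,\pi'\in{\cal P}_{n,E}.
\]
This follows from the representation $p_{\pi,\pi'}=\Phi_j(T)$ in (\ref{Phi}): replacing $\pi,\pi'$ by $\sigma(\pi),\sigma(\pi')$ does not change the number $i_\ell$ of $\ell$-blocks of $\pi$ nor the number $j_k$ of $k$-blocks of $\pi'$, and the multiset of merger sizes $\{i_{k,\ell,s}\}_{s\in[j_k]}$ attached to each pair $(k,\ell)$ is also unchanged; at worst the blocks of $\pi'$ get relabelled, which only permutes the entries within each row-vector $t_{k,\ell}$ of the tensor $T$. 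The symmetry property (\ref{phisym}) then yields $\Phi_j(T)=\Phi_j(\sigma(T))$, proving the claim.

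Given this, the inductive step is a routine change of variables. For any $\pi'\in{\cal P}_{n,E}$ and $\sigma\in S_n$ one writes
\[
   \pr(\sigma({\cal A}_r)=\pi')
   \ =\ \pr({\cal A}_r=\sigma^{-1}(\pi'))
   \ =\ \sum_{\pi\in{\cal P}_{n,E}}\pr({\cal A}_{r-1}=\pi)\,p_{\pi,\sigma^{-1}(\pi')}.
\]
Substituting $\pi=\sigma^{-1}(\widetilde\pi)$, then applying the inductive hypothesis $\pr({\cal A}_{r-1}=\sigma^{-1}(\widetilde\pi))=\pr({\cal A}_{r-1}=\widetilde\pi)$ and the symmetry $p_{\sigma^{-1}(\widetilde\pi),\sigma^{-1}(\pi')}=p_{\widetilde\pi,\pi'}$ just established, the sum collapses to $\pr({\cal A}_r=\pi')$, as required.

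The only mildly delicate point is the verification of transition-probability symmetry, and in particular keeping careful track of how the permutation $\sigma$ affects the indexing of the blocks of $\pi'$ (which controls the inner index $s\in[j_k]$ in the tensor $T$). Once this bookkeeping is handled and absorbed into (\ref{phisym}), the rest of the argument is mechanical, so I do not expect any genuine obstacle beyond this.
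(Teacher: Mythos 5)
Your proposal is correct and follows essentially the same route as the paper: induction on $r$ driven by the transition-probability symmetry $p_{\sigma(\pi),\sigma(\pi')}=p_{\pi,\pi'}$, which the paper isolates as a separate lemma and proves exactly as you sketch, via the representation $p_{\pi,\pi'}=\Phi_j(T)$ and the symmetry (\ref{phisym}) coming from assumption (A). The only detail the paper spells out that you leave implicit is the trivial case $\pi\not\subseteq\pi'$, where both transition probabilities vanish because $\sigma$ preserves the refinement relation.
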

\begin{remark}
   If ${\cal A}_0$ is not exchangeable, then all the ${\cal A}_r$, $r\in\nz$, are in general not exchangeable. Still, the ancestral process shares some `reduced' symmetry properties. To explain this it turns out to be useful to introduce certain subgroups of the permutation group $S_n$ as follows.

   Let $k_1,\ldots,k_n\in E$. For $\ell\in E$ define $A_\ell:=\{i\in[n]:k_i=\ell\}$. Note that $A_\ell=A_\ell^{(k_1,\ldots,k_n)}$ depends on $k_1,\ldots,k_n$ and that $\sum_{\ell\in E}|A_\ell| =n$. For $\ell\in E$ let $\sigma_\ell$ be a permutation of $A_\ell$ and for $i\in A_\ell$ define $\sigma(i):=\sigma_\ell(i)$. Use the symbol $S_{k_1,\ldots,k_n}$ for the set of all such permutations $\sigma$. Note that $S_{k_1,\ldots,k_n}$ is a subgroup of $S_n$.

   Now, let $\pi\in{\cal P}_{n,E}$ and let $k_i=k_i(\pi)$ denote the type of individual $i\in[n]$. Then, for every generation $r\in\nz$, the random labeled partition ${\cal A}_r$ is partially exchangeable with respect to $S_{k_1,\ldots,k_n}$ on the event $\{{\cal A}_{r-1}=\pi\}$ in the sense of Definition \ref{partialexchangeability}, i.e.,
   \[
   \pr(\sigma({\cal A}_r)=\pi',{\cal A}_{r-1}=\pi)\ =\ \pr({\cal A}_r=\pi',{\cal A}_{r-1}=\pi)
  \]
   for all $\sigma\in S_{k_1,\ldots,k_n}$ and $\pi'\in{\cal P}_{n,E}$.

   For the single-type situation $|E|=1$, all the $k_1,\ldots,k_n$ are equal and, hence, the subgroups $S_{k_1,\ldots,k_n}$ coincide with the full permutation group $S_n$. Thus, in this case, the equality $\pr(\sigma({\cal A}_r)=\pi',{\cal A}_{r-1}=\pi)= \pr({\cal A}_r=\pi',{\cal A}_{r-1}=\pi)$ holds for all $\sigma\in S_n$ and $\pi,\pi'\in{\cal P}_n$. Summing over all $\pi$ shows that $\sigma({\cal A}_r)\stackrel{d}{=}{\cal A}_r$ for all $\sigma\in S_n$. Thus, ${\cal A}_r$ is exchangeable. Similar issues concerning $d$-type exchangeability for infinite labeled partitions and $d$-type coalescents are addressed in \cite[Section 3.1]{JohnstonKyprianouRogers2023}.
\end{remark}
\subsection{Limiting results} \label{limits}
It is natural to investigate the behaviour of the ancestral process ${\cal A}^{(n,N)}=({\cal A}_r^{(n,N)})_{r\in\nz_0}$ for large subpopulation sizes $N_k$, $k\in E$, i.e., when the minimal subpopulation size
\begin{equation} \label{Nmin}
   N_{\rm min}\ :=\ \min_{k\in E}N_k
\end{equation}
tends to infinity. In the following, $P_N=(p_{\pi,\pi'}^{(N)})_{\pi,\pi'\in{\cal P}_{n,E}}$ denotes the transition matrix of the ancestral process $({\cal A}_r^{(n,N)})_{r\in\nz_0}$ and $I$ the identity matrix (of the same size as $P_N$).
\begin{lemma} \label{identitylemma}
   Assume that $n\ge 2$. Then the convergence $P_N\to I$ as $N_{\rm min}\to\infty$ holds if and only if $\me(\nu_{k,k,1})\to 1$ and $\me(\nu_{k,k,1}\nu_{k,k,2})\to 1$ as $N_{\rm min}\to\infty$ for all $k\in E$. In this case, $(N_k/N_\ell)^2\me(\nu_{k,\ell,1}\nu_{k,\ell,2})\to\delta_{k,\ell}$ (Kronecker symbol) as $N_{\rm min}\to\infty$ for all $k,\ell\in E$.
\end{lemma}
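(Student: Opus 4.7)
The necessity is immediate from the diagonal case of~(\ref{rtrans2}), which gives $p_{\pi,\pi}=\me(\prod_{k\in E}\prod_{s=1}^{i_k}\nu_{k,k,s})$ with $i_k$ the number of $k$-blocks of $\pi$. If $P_N\to I$ then every diagonal entry tends to $1$; specializing $\pi$ to have a single $k$-block yields $\me(\nu_{k,k,1})\to 1$, and specializing to two $k$-blocks (possible since $n\ge 2$) yields $\me(\nu_{k,k,1}\nu_{k,k,2})\to 1$.

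For sufficiency, since $P_N$ is stochastic, $P_N\to I$ is equivalent to $p_{\pi,\pi}\to 1$ for every $\pi\in{\cal P}_{n,E}$. The plan is to bound $1-p_{\pi,\pi}$ by a union bound over finitely many elementary events. Writing $\pi$ with $j$ blocks of types $k_1,\ldots,k_j$, the event $\{{\cal A}_r=\pi\}$ given ${\cal A}_{r-1}=\pi$ fails only if some block's ancestor is assigned a parent of a different type (event $A_s$, $s\in[j]$) or two blocks' ancestors share a parent (event $B_{s,s'}$, $1\le s<s'\le j$). The random assignment together with the hypothesis give $\pr(A_s)=1-\me(\nu_{k_s,k_s,1})\to 0$. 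For $B_{s,s'}$ with $k_s=k_{s'}=\ell$, summing the coalescence probabilities~(\ref{coal1}) over all parent types $k'\in E$ gives $\pr(B_{s,s'})=(\me(Y_\ell)-N_\ell)/(N_\ell(N_\ell-1))$ with $Y_\ell:=\sum_{k'\in E}\sum_{i=1}^{N_{k'}}\nu_{k',\ell,i}^2$, while for $k_s\ne k_{s'}$ the analogous sum of~(\ref{coal2}) together with a Cauchy--Schwarz step in $(k',i)$ reduces the task to estimating $\me(Y_{k_s})$ and $\me(Y_{k_{s'}})$.

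The core estimate is $\me(Y_\ell)-N_\ell=o(N_\ell^2)$ as $N_{\rm min}\to\infty$. I split the sum by $k'=\ell$ and $k'\ne\ell$. For $k'=\ell$, expanding $(\sum_i\nu_{\ell,\ell,i})^2=N_{\ell,\ell}^2$ and using exchangeability yield the identity $N_\ell\me(\nu_{\ell,\ell,1}^2)=\me(N_{\ell,\ell}^2)-N_\ell(N_\ell-1)\me(\nu_{\ell,\ell,1}\nu_{\ell,\ell,2})$; combining it with the a priori bound $\me(N_{\ell,\ell}^2)\le N_\ell\me(N_{\ell,\ell})=N_\ell^2\me(\nu_{\ell,\ell,1})$ gives $N_\ell\me(\nu_{\ell,\ell,1}^2)\le N_\ell^2(\me(\nu_{\ell,\ell,1})-\me(\nu_{\ell,\ell,1}\nu_{\ell,\ell,2}))+N_\ell\me(\nu_{\ell,\ell,1}\nu_{\ell,\ell,2})=o(N_\ell^2)$ by both hypotheses. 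For $k'\ne\ell$, the bound $\nu_{k',\ell,i}^2\le\nu_{k',\ell,i}N_\ell$ together with the size constraint~(\ref{size1}), in the form $\sum_{k'\ne\ell}\me(N_{k',\ell})=N_\ell(1-\me(\nu_{\ell,\ell,1}))$, yields $\sum_{k'\ne\ell}N_{k'}\me(\nu_{k',\ell,1}^2)\le N_\ell^2(1-\me(\nu_{\ell,\ell,1}))=o(N_\ell^2)$. Summing the $j+\binom{j}{2}$ contributions in the union bound completes the proof that $1-p_{\pi,\pi}\to 0$.

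For the last assertion, an analogous expansion of $(\sum_{i\in[N_k]}\nu_{k,\ell,i})^2=N_{k,\ell}^2$ gives $\me(\nu_{k,\ell,1}\nu_{k,\ell,2})\le\me(N_{k,\ell}^2)/(N_k(N_k-1))$. The case $k=\ell$ reduces immediately to the hypothesis $\me(\nu_{k,k,1}\nu_{k,k,2})\to 1$. For $k\ne\ell$, the size constraint $0\le N_{k,\ell}\le N_\ell-N_{\ell,\ell}$ yields $\me(N_{k,\ell}^2)\le\me((N_\ell-N_{\ell,\ell})^2)$, and the latter is $o(N_\ell^2)$ since $\me(N_{\ell,\ell})/N_\ell\to 1$ and $\me(N_{\ell,\ell}^2)/N_\ell^2\to 1$ are by-products of the previous paragraph. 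The principal obstacle throughout is the estimate $\me(Y_\ell)-N_\ell=o(N_\ell^2)$: it demands careful use of~(\ref{size1}) to propagate the two diagonal convergences on $\me(\nu_{\ell,\ell,1})$ and $\me(\nu_{\ell,\ell,1}\nu_{\ell,\ell,2})$ into uniform control over all cross-type contributions to the coalescence probability.
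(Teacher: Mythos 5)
Your proof is correct, but it takes a genuinely different route from the paper's in the sufficiency direction. The paper first reduces $P_N\to I$, via the monotonicity property (Corollary \ref{monotonecorollary}) together with the bounds of Lemma \ref{coalbounds}, to the statement that the mean backward mutation probabilities $\me(N_{k,\ell})/N_\ell$ tend to $\delta_{k,\ell}$ and that all coalescence probabilities (\ref{coal1}) and (\ref{coal2}) tend to zero; it then shows this is equivalent to the two stated moment conditions by essentially the same second-moment manipulations you use (the bound $\me(N_{\ell,\ell}^2)\le N_\ell\me(N_{\ell,\ell})$, Jensen's inequality, and the expansion of $\me(N_{\ell,\ell}^2)$ in terms of $\me(\nu_{\ell,\ell,1}^2)$ and $\me(\nu_{\ell,\ell,1}\nu_{\ell,\ell,2})$). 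You instead read necessity directly off the diagonal entries of $P_N$ (which is more direct than the paper's equivalence chain), and for sufficiency you use stochasticity of the rows to reduce everything to $p_{\pi,\pi}\to 1$ and then a union bound over the per-block type-change events and the pairwise coalescence events, with the single estimate $\me(Y_\ell)=o(N_\ell^2)$ (your $Y_\ell$, obtained by splitting the sum at $k'=\ell$ and using (\ref{size1})) carrying all the weight; this bypasses Corollary \ref{monotonecorollary} and Lemma \ref{coalbounds} entirely and is more self-contained, at the cost of being longer. For the final assertion the paper is slicker: for $k\ne\ell$ it bounds $\frac{(N_k)_2}{(N_\ell)_2}\me(\nu_{k,\ell,1}\nu_{k,\ell,2})=\frac{N_k}{(N_\ell)_2}\me\big(\nu_{k,\ell,1}\sum_{i=2}^{N_k}\nu_{k,\ell,i}\big)\le\frac{\me(N_{k,\ell})}{N_\ell-1}\to 0$, whereas you go through $\me(N_{k,\ell}^2)\le\me\big((N_\ell-N_{\ell,\ell})^2\big)$; your route works, but you should make explicit that $\me(N_{\ell,\ell}^2)/N_\ell^2\to 1$ (upper bound from $N_{\ell,\ell}\le N_\ell$, lower bound from $\me(N_{\ell,\ell}^2)\ge (N_\ell)_2\,\me(\nu_{\ell,\ell,1}\nu_{\ell,\ell,2})$ or Jensen), since this is used but only implicitly contained in your earlier identities.
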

Let us first focus on a situation, where it will turn out that the ancestral process is in the domain of attraction of a discrete-time limiting multi-type process. Let $j=(j_k)_{k\in E}\in\nz_0^E$. A tensor $T=(t_{k,\ell})_{k,\ell\in E}\in{\cal T}_j$ is called a \emph{diagonal tensor}, if $t_{k,\ell}=\mathbf{0}$ ($\in\rz^{j_k}$) for all $k,\ell\in E$ with $k\ne\ell$. The particular diagonal tensor $(t_{k,\ell})_{k,\ell\in E}\in{\cal T}_j$ with diagonal entries $t_{k,k}:=(1,\ldots,1)\in\rz^{j_k}$ for all $k\in E$ is denoted by $\mathbf{1}_j$.
\begin{theorem} (Convergence of the ancestral process, discrete-time limit) \label{main1}\\
   Assume that
      for all $j=(j_k)_{k\in E}\in\nz_0^E$ and all tensors $T=(t_{k,\ell})_{k,\ell\in E}\in{\cal T}_j\setminus\{\mathbf{1}_j\}$, 
          the convergence $\Phi_j^{(N)}(T)\to\phi_j(T)$ as $N_{\rm min}\to\infty$ holds for some constant $\phi_j(T)\in[0,1]$.
   Let $n\in\nz$. If ${\cal A}_0^{(n,N)}\to\Pi_0^{(n)}$ in distribution as $N_{\rm min}\to\infty$ for some random labeled partition $\Pi_0^{(n)}$ of $[n]$, then the multi-type ancestral process $({\cal A}_r^{(n,N)})_{r\in\nz_0}$ converges in $D_{{\cal P}_{n,E}}(\nz_0)$ as $N_{\rm min}\to\infty$ to a discrete-time limiting Markov chain $\Pi^{(n)}=(\Pi_r^{(n)})_{r\in\nz_0}$ with state space ${\cal P}_{n,E}$ and transition matrix $A:=(a_{\pi,\pi'})_{\pi,\pi'\in{\cal P}_{n,E}}$ having non-diagonal entries $a_{\pi,\pi'}:=\phi_j(T)$ if $\pi\subset\pi'$ and $a_{\pi,\pi'}:=0$ otherwise, and diagonal entries $a_{\pi,\pi}:=1-\sum_{\pi'\ne\pi'} a_{\pi,\pi'}$.
\end{theorem}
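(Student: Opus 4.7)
The plan is to reduce weak convergence in $D_{{\cal P}_{n,E}}(\nz_0)$ to entrywise convergence $P_N\to A$ of the transition matrices together with the assumed convergence of the initial distributions. For discrete-time homogeneous Markov chains on a Polish state space, $D_{{\cal P}_{n,E}}(\nz_0)$ may be identified with the product space $({\cal P}_{n,E})^{\nz_0}$ equipped with the product topology, so weak convergence reduces to convergence of all finite-dimensional distributions, which in turn follows immediately from convergence of the initial distributions and of the transition matrices.

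First I would verify entrywise convergence $P_N\to A$. Using the identification $p_{\pi,\pi'}^{(N)}=\Phi_j^{(N)}(T)$ from (\ref{Phi}), where $j=(j_k)_{k\in E}$ records the block counts of $\pi'$ and the tensor $T$ encodes how the labeled blocks of $\pi$ merge into those of $\pi'$: for $\pi\not\subseteq\pi'$ we have $p_{\pi,\pi'}^{(N)}=0=a_{\pi,\pi'}$; for $\pi\subseteq\pi'$ with $\pi\neq\pi'$ the associated tensor $T$ is necessarily distinct from $\mathbf{1}_j$, so the hypothesis yields $p_{\pi,\pi'}^{(N)}\to\phi_j(T)=a_{\pi,\pi'}$. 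Since each row of $P_N$ is a probability mass function on ${\cal P}_{n,E}$, Fatou's lemma gives $\sum_{\pi'\neq\pi}a_{\pi,\pi'}\le 1$, so $a_{\pi,\pi}:=1-\sum_{\pi'\neq\pi}a_{\pi,\pi'}\in[0,1]$ is well defined and $A$ is stochastic. Writing $p_{\pi,\pi}^{(N)}=1-\sum_{\pi'\neq\pi}p_{\pi,\pi'}^{(N)}$, the convergence $p_{\pi,\pi}^{(N)}\to a_{\pi,\pi}$ then follows by interchanging limit and sum: when $|E|<\infty$ the space ${\cal P}_{n,E}$ is finite and the interchange is immediate; in general a Scheff\'e-type argument applied to the row-probability measures of $P_N$, together with the a priori bound $\sum_{\pi'\neq\pi}p_{\pi,\pi'}^{(N)}\le 1$, does the job.

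With $P_N\to A$ entrywise and ${\cal A}_0^{(n,N)}\to\Pi_0^{(n)}$ in distribution, an elementary induction expresses each finite-dimensional probability $\pr({\cal A}_{r_1}^{(n,N)}=\pi_1,\ldots,{\cal A}_{r_m}^{(n,N)}=\pi_m)$ as a finite sum of products of one initial weight and finitely many transition entries of $P_N$, hence it converges to the corresponding quantity for the chain $\Pi^{(n)}$ with initial state $\Pi_0^{(n)}$ and transition matrix $A$. Joint convergence of all finite-dimensional distributions is equivalent to weak convergence in the product topology on $D_{{\cal P}_{n,E}}(\nz_0)$, yielding the claim. The main obstacle is the justification of the limit-sum interchange for the diagonal entries when $|E|$ is infinite; every other step reduces to bookkeeping with the definition of $\Phi_j^{(N)}$ and standard Markov chain facts.
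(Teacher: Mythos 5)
Your argument is essentially the paper's own proof: identify $p_{\pi,\pi'}^{(N)}$ with $\Phi_j^{(N)}(T)$, treat the cases $\pi\not\subseteq\pi'$, $\pi\subseteq\pi'$ with $T\ne\mathbf{1}_j$, and $\pi=\pi'$ (where $p_{\pi,\pi}^{(N)}=1-\sum_{\pi'\ne\pi}p_{\pi,\pi'}^{(N)}$), conclude $P_N\to A$ entrywise, and pass to convergence of finite-dimensional distributions and hence in $D_{{\cal P}_{n,E}}(\nz_0)$ with its product topology. The only difference is that you explicitly discuss the limit--sum interchange for the diagonal entries when $|E|=\infty$, a point the paper passes over without comment, so your proposal is correct and follows the same route.
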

For many multi-type Cannings models (see Lemma \ref{identitylemma}), the transition matrix $P_N$ of the ancestral process satisfies $P_N\to I$ as $N_{\rm min}\to\infty$. This holds for example under a sort of weak mutation assumption for the multi-type Wright--Fisher model discussed in Section \ref{mwfm}. In this situation, the limits $\phi_j(T)$ in 
Theorem \ref{main1} are all equal to zero and, hence, Theorem \ref{main1} is not useful, since the limiting Markov chain $\Pi^{(n)}$ satisfies $\Pi_r^{(n)}=\Pi_0^{(n)}$ almost surely for all $r\in\nz_0$. Thus, other assumptions are required in this situation to obtain convergence of the ancestral process to a non-degenerate limiting process. The following result covers such cases and it turns out that the limiting process is a continuous-time multi-type process.
\begin{theorem} (Convergence of the ancestral process, continuous-time limit) \label{main2}\\
   Suppose that, for every $N=(N_k)_{k\in E}$, there exists $c_N>0$ such that $c_N\to 0$ as $N_{\rm min}\to\infty$ and such that, 
       for all $j=(j_k)_{k\in E}\in\nz_0^E$ and all tensors $T=(t_{k,\ell})_{k,\ell\in E}\in{\cal T}_j\setminus\{\mathbf{1}_j\}$, 
       the convergence $\Phi_j^{(N)}(T)/c_N\to\phi_j(T)$ as $N_{\rm min}\to\infty$ holds for some real constant $\phi_j(T)$.
   Let $n\in\nz$. If ${\cal A}_0^{(n,N)}\to\Pi_0^{(n)}$ in distribution as $N_{\rm min}\to\infty$ for some random labeled partition $\Pi_0^{(n)}$ of $[n]$, then the time-scaled multi-type ancestral process $({\cal A}_{\lfloor t/c_N\rfloor}^{(n,N)})_{t\ge 0}$ converges in $D_{{\cal P}_{n,E}}([0,\infty))$ as $N_{\rm min}\to\infty$ to a continuous-time limiting Markov process $\Pi^{(n)}=(\Pi_t^{(n)})_{t\ge 0}$ with state space ${\cal P}_{n,E}$ and infinitesimal generator $Q:=(q_{\pi,\pi'})_{\pi,\pi'\in{\cal P}_{n,E}}$ having non-diagonal entries $q_{\pi,\pi'}:=\phi_j(T)$ if $\pi\subset\pi'$ and $q_{\pi,\pi'}:=0$ otherwise, and diagonal entries $q_{\pi,\pi}:=-\sum_{\pi'\ne\pi}q_{\pi,\pi'}$, $\pi\in{\cal P}_{n,E}$.
\end{theorem}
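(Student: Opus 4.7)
The plan is to establish entrywise convergence of the rescaled generators $Q_N:=(P_N-I)/c_N$ to $Q$ on the finite state space ${\cal P}_{n,E}$, and then to invoke a classical semigroup-convergence lemma (the discrete-time analogue of the Trotter--Kurtz result, of the type repeatedly used in earlier work of the second author) to deduce $P_N^{\lfloor t/c_N\rfloor}\to e^{tQ}$ uniformly on compact $t$-intervals; from this the Skorokhod convergence will follow in a standard way.

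I would first handle the off-diagonal entries. Fix $\pi\neq\pi'$ in ${\cal P}_{n,E}$. If $\pi\not\subseteq\pi'$, both $p_{\pi,\pi'}^{(N)}$ and $q_{\pi,\pi'}$ vanish by construction. Otherwise $\pi\subseteq\pi'$ with $\pi\neq\pi'$ as labeled partitions, and formula (\ref{Phi}) gives $p_{\pi,\pi'}^{(N)}=\Phi_j^{(N)}(T)$ for the tensor $T$ encoding the merging structure of $\pi$ into $\pi'$. Since $\pi\neq\pi'$, either at least one parent block of $\pi'$ has two or more child blocks of $\pi$ merging into it (forcing some diagonal entry $i_{k,k,s}\ge 2$) or at least one block undergoes a type change (forcing some off-diagonal entry $i_{k,\ell,s}\ge 1$ with $k\neq\ell$); in both cases $T\neq\mathbf{1}_j$, so the hypothesis delivers $p_{\pi,\pi'}^{(N)}/c_N\to\phi_j(T)=q_{\pi,\pi'}$. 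The diagonal entries are then forced by stochasticity of $P_N$ and the finiteness of ${\cal P}_{n,E}$:
\[
(Q_N)_{\pi,\pi}\ =\ \frac{p_{\pi,\pi}^{(N)}-1}{c_N}\ =\ -\sum_{\pi'\neq\pi}\frac{p_{\pi,\pi'}^{(N)}}{c_N}\ \longrightarrow\ -\sum_{\pi'\neq\pi}q_{\pi,\pi'}\ =\ q_{\pi,\pi}.
\]
Thus $Q_N\to Q$ entrywise and, the index set being finite, also in any matrix norm.

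Combined with $c_N\to 0$, the aforementioned semigroup lemma produces $P_N^{\lfloor t/c_N\rfloor}\to e^{tQ}$ uniformly on compact time intervals. The Markov property together with the assumed convergence ${\cal A}_0^{(n,N)}\Rightarrow\Pi_0^{(n)}$ then yields convergence of all finite-dimensional distributions of $({\cal A}_{\lfloor t/c_N\rfloor}^{(n,N)})_{t\ge 0}$ to those of the continuous-time Markov chain on ${\cal P}_{n,E}$ with generator $Q$ and initial law ${\cal L}(\Pi_0^{(n)})$. Since ${\cal P}_{n,E}$ is finite and $Q$ is bounded, this lifts to weak convergence in $D_{{\cal P}_{n,E}}([0,\infty))$ by standard tightness criteria for Markov chains on finite state spaces. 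The only delicate bookkeeping point—and the step I expect to require a moment of care rather than genuine difficulty—is the verification that every non-diagonal transition corresponds to a tensor $T\neq\mathbf{1}_j$, so that the hypothesis is applicable; once this is in hand I anticipate no deeper obstacle.
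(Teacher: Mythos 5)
Your proposal is correct and follows essentially the same route as the paper: the paper likewise establishes the entrywise expansion $P_N=I+c_NQ+o(c_N)$ (distinguishing $\pi\not\subseteq\pi'$, $\pi\subseteq\pi'$ with $T\ne\mathbf{1}_j$, and $T=\mathbf{1}_j$ forcing $\pi=\pi'$, with the diagonal handled via stochasticity) and then invokes Ethier and Kurtz \cite[p.~168, Theorem 2.6]{EthierKurtz1986}, which is exactly the Trotter--Kurtz-type semigroup lemma you appeal to, to pass to convergence of finite-dimensional distributions and in $D_{{\cal P}_{n,E}}([0,\infty))$.
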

\begin{remark}
   The standard choice for the time-scaling is the supremum over all coalescence probabilities
   \begin{equation} \label{scaling}
      c_N\ :=\ \sup\{
        c_{k,\ell}(N_k,N_\ell),c_{k,\ell_1,\ell_2}(N_k,N_{\ell_1},N_{\ell_2})
        :k,\ell,\ell_1,\ell_2\in E,\ell_1\ne\ell_2
      \},
   \end{equation}
   where $c_{k,\ell}(N_k,N_\ell)$ and $c_{k,\ell_1,\ell_2}(N_k,N_{\ell_1},N_{\ell_2})$ are the coalescence probabilities defined via (\ref{coal1}) and (\ref{coal2}), in agreement with the standard choice (see \cite{MoehleSagitov2001}) $c_N=\Phi_1(2)$ for the $1$-type case. Intuitively, if time is measured in units of $\lfloor 1/c_N\rfloor$ generations, a possible limiting process of the time-scaled ancestral process as $N_{\rm min}\to\infty$ should capture the genealogy for all parts of the ancestral process emerging from maximal coalescence probabilities, i.e., for all parts having the fastest evolution backward in time. This intuition is made rigorous by Theorem \ref{main2} above. If $c_N\to c$ as $N_{\rm min}\to\infty$ for some constant $c>0$, then we are back in the situation of Theorem \ref{main1}, where $c_N$ is not needed for the formulation of the convergence result. In Section \ref{mwfm}, Theorem \ref{main2} is applied with $c_N:=1/N_{\rm min}$ to the multi-type Wright--Fisher model leading in the limit to the multi-type Kingman $n$-coalescent.
\end{remark}

\subsection{Multi-type exchangeable coalescents} \label{coal}

In the following we restrict our attention to the continuous-time setting in the spirit of Theorem \ref{main2}. Similar results hold for the discrete-time setting considered in Theorem \ref{main1}, but are omitted here for the sake of shortness and simplicity.

For $j=(j_k)_{k\in E}\in\nz_0^E$ let ${\cal T}_j$ denote the set of all tensors $T=(t_{k,\ell})_{k,\ell\in E}$ with vector entries $t_{k,\ell}\in\nz_0^{j_k}$ for all $k,\ell\in E$. As in Theorem \ref{main2} it is from now on assumed that for every $N=(N_k)_{k\in E}$ there exists a constant $c_N>0$ such that the limits
\begin{equation} \label{philimits}
   \phi_j(T)\ :=\ \lim_{N_{\rm min}\to\infty}\frac{\Phi_j^{(N)}(T)-\delta_{T,\mathbf{1}_j}}{c_N}
\end{equation}
exist for all $j\in\nz_0^E$ and all tensors $T\in{\cal T}_j$. Note that $\phi_j(\mathbf{1}_j)\le 0$ for all $j\in\nz_0^E$. In the following it is shown that the consistency equation (\ref{consis0}) of the functions $\Phi_j^{(N)}$ carries over to the limits (\ref{philimits}). More precisely, for all $j=(j_k)_{k\in E}\in\nz_0^E$ and all $T=(t_{k,\ell})_{k,\ell\in E}\in{\cal T}_j$, the equality
\begin{equation} \label{phiconsis}
   \phi_j(T)\ =\ \sum_{k\in E}\phi_{j+e_k}(T(k,\ell)) + \sum_{k\in E}\sum_{s=1}^{j_k}\phi_j(T(k,\ell,s))
\end{equation}
holds for all $\ell\in E$, where $e_k$ denotes the $k$-th unit vector in $\rz^E$, the tensor $T(k,\ell)$ is obtained from $T$ by replacing the vector $t_{k,\ell}=(i_{k,\ell,1},\ldots,i_{k,\ell,j_k})$ by $(i_{k,\ell,1},\ldots,i_{k,\ell,j_k},1)$ and the (possibly empty) vector $t_{k,\ell'}=(i_{k,\ell',1},\ldots,i_{k,\ell',j_k})$ by $(i_{k,\ell',1},\ldots,i_{k,\ell',j_k},0)$ for all $\ell'\ne\ell$, and the tensor $T(k,\ell,s)$ is obtained from $T$ by replacing the single entry $i_{k,\ell,s}$ by $i_{k,\ell,s}+1$. For $T\in{\cal T}_j\setminus\{\mathbf{1}_j\}$, Eq.~(\ref{phiconsis}) follows by dividing Eq.~(\ref{consis}) by $c_N$ and taking the limit $N_{\rm min}\to\infty$. For the particular tensor $T=\mathbf{1}_j$, we conclude from (\ref{consis}) that, for all $\ell\in E$,
\begin{eqnarray*}
   \frac{\Phi_j^{(N)}(\mathbf{1}_j)-1}{c_N}
   & = & \sum_{k\in E}\frac{\Phi_{j+e_k}^{(N)}(\mathbf{1}_j(k,\ell))}{c_N}
         + \sum_{k\in E}\sum_{s=1}^{j_k}
         \frac{\Phi_j^{(N)}(\mathbf{1}_j(k,\ell,s))}{c_N}-\frac{1}{c_N}\\
   & = & \frac{\Phi_{j+e_\ell}^{(N)}(\mathbf{1}_{j+e_\ell})-1}{c_N}
+ \sum_{k\ne\ell}\frac{\Phi_{j+e_k}^{(N)}(\mathbf{1}_j(k,\ell))}{c_N}
+ \sum_{k\in E}\sum_{s=1}^{j_k}\frac{\Phi_j^{(N)}(\mathbf{1}_j(k,\ell,s))}{c_N}.
\end{eqnarray*}
Taking the limit $N_{\rm min}\to\infty$ yields
\begin{eqnarray*}
   \phi_j(\mathbf{1}_j)
   & = & \phi_{j+e_\ell}(1_{j
         +e_\ell})+\sum_{k\ne\ell}\phi_{j+e_k}(\mathbf{1}_j(k,\ell))
         +\sum_{k\in E}\sum_{s=1}^{j_k}\phi_j(\mathbf{1}_j(k,\ell,s))\\
   & = & \sum_{k\in E}\phi_{j+e_k}(\mathbf{1}_j(k,\ell))+\sum_{k\in E}\sum_{s=1}^{j_k}\phi_j(\mathbf{1}_j(k,\ell,s)),
\end{eqnarray*}
which shows that (\ref{phiconsis}) holds as well for $T=\mathbf{1}_j$.

From Corollary \ref{monotonecorollary} it follows that the functions $\phi_j$, $j=(j_k)_{k\in E}\in\nz_0^E$, are monotone in the sense that
\begin{equation} \label{phimon}
   0\ \le\ \phi_{j'}(T')\ \le\ \phi_j(T)
\end{equation}
for all $j,j'\in\nz_0^E$ with $j\le j'$ and all tensors $T\in{\cal T}_j\setminus\{\mathbf{1}_j\}$ and $T'\in{\cal T}_{j'}\setminus\{\mathbf{1}_{j'}\}$ with $T\le T'$. Furthermore, $\phi_{j'}(\mathbf{1}_{j'})\le\phi_j(\mathbf{1}_j)\le 0$ for all $j,j'\in\nz_0^E$ with $j\le j'$.

Following the proof of Corollary \ref{naturalcoupling}, but with conditional probabilities replaced by infinitesimal rates (see also Burke and Rosenblatt \cite{BurkeRosenblatt1958}), it follows by exploiting the consistency equations (\ref{phiconsis}) that the limiting processes $\Pi^{(n)}$, $n\in\nz$, satisfy the natural coupling property, i.e., for all $m,n\in\nz$ with $m\le n$, the process $(\varrho_{n,m}\circ \Pi_t^{(n)})_{t\ge 0}$ has the same distribution as $\Pi^{(m)}$. The Daniell--Kolmogorov extension theorem ensures that there exists a process $\Pi$ taking values in the space ${\cal P}_{\infty,E}$ of labeled partitions of $\nz$ such that, for each $n\in\nz$, the restricted process $(\varrho_n\circ\Pi_t)_{t\ge 0}$ has the same distribution as $\Pi^{(n)}$. The process $\Pi$ is called a \emph{multi-type coalescent}. Its distribution is determined by the rate functions $\phi_j$, $j\in\nz_0^E$. We have therefore verified the existence of a general class of multi-type coalescent processes allowing for simultaneous multiple collisions of ancestral lineages. Note that if ${\cal A}_0$ is exchangeable, then, by Lemma \ref{exchangelemma}, the ancestral process $({\cal A}_r^{(n,N)})_{r\in\nz_0}$ is exchangeable and this exchangeability carries over to the limiting multi-type coalescent $\Pi$. As we shall see in the following section, the structure and analysis of these multi-type exchangeable coalescent processes is more involved as it seems to be at a first glance and leads to further open questions.
\subsection{Integral representations} \label{intrep}
It is well-known (Pitman \cite{Pitman1999}, Sagitov \cite{Sagitov1999}) that (single-type) coalescents with multiple collisions can be characterized by a finite measure $\Lambda$ on the unit interval ($\Lambda$-coalescent). Similarly, single-type exchangeable coalescents (allowing for simultaneous multiple mergers of ancestral lineages) can be either characterized (see, for example, M. and Sagitov \cite{MoehleSagitov2001}) by a certain sequence $(Q_j)_{j\in\nz}$ of finite measures $Q_j$ on the finite simplex
\begin{equation} \label{finitesimplex}
   \Delta_j\ :=\ \{(x_1,\ldots,x_j)\in[0,1]^j:\sum_{i=1}^jx_i\le 1\},
\end{equation}
$j\in\nz$, or (Schweinsberg \cite{Schweinsberg2000}; see also Sagitov \cite{Sagitov2003}) by a single finite measure $\Xi$ on the infinite simplex
\begin{equation} \label{simplex}
   \Delta\ :=\ \{x=(x_i)_{i\in\nz}:x_1\ge x_2\ge\cdots\ge 0,\sum_{i\in\nz} x_i\le 1\}
\end{equation}
($\Xi$-coalescent). Note that $\Delta$ is a compact Hausdorff Polish space. In common for these characterizations is the fact that the infinitesimal rates of the coalescent are expressed in terms of certain integrals over the corresponding measure $\Lambda$, the sequence of measures $Q_j$, $j\in\nz$, or the measure $\Xi$, respectively. In this section we aim for similar characterizations for multi-type exchangeable coalescents.

In the following we focus on the particular subclass of multi-type exchangeable coalescents having the additional property that the support of the rate functions $\phi_j$ is concentrated on diagonal tensors $T\in{\cal T}_j$ as already defined at the beginning of Section \ref{limits}. Having a sort of asymptotically weak mutation regime in mind, this is a natural condition.
In this case the general consistency equation (\ref{phiconsis}) considerably simplifies to
\begin{equation} \label{consisdiag}
   \phi_j(T)\ =\ \phi_{j+e_\ell}(T(\ell,\ell)) + \sum_{s=1}^{j_\ell}\phi_j(T(\ell,\ell,s)),\qquad\ell\in E.
\end{equation}

Compared to the rather involved general equations (\ref{phiconsis}), the advantage of (\ref{consisdiag}) is the fact that the values $\phi_j(T)$ of the family of functions $\phi_j$ with diagonal tensors $T$ having only entries greater than or equal to $2$ fully determine the functions $\phi_j$. This crucial fact is readily verified using (\ref{consisdiag}) by induction on the number of ones in the tensor $T$, and paves the way to obtain integral representations in the spirit of M. and Sagitov \cite{MoehleSagitov2001}, Schweinsberg \cite{Schweinsberg2000} and Sagitov \cite{Sagitov2003} for rate functions $\phi_j$, $j\in\nz_0^E$, with support concentrated on diagonal tensors. 

Integral representations for rate functions $\phi_j$, $j\in\nz_0^E$, satisfying the general consistency equations (\ref{phiconsis}), however, remain unknown. The general consistency equations (\ref{phiconsis}) also motivate to study a new class of multi-type exchangeable partition probability functions (M-EPPF) as defined in Section \ref{MEPPF} in the appendix. These stimulating open problems are left for future work.

In the remaining part of this section it is always assumed that the support of each function $\phi_j$ is concentrated on diagonal tensors. We now turn to the desired integral representations.

For $j=(j_k)_{k\in E}\in\nz_0^E$ let $\Delta_j$ denote the simplex of all $x:=(x_{k,s})_{k\in E,s\in[j_k]}$ satisfying $x_{k,s}\ge 0$ for all $k\in E$ and all $s\in[j_k]$ and $\sum_{k\in E}\sum_{s=1}^{j_k}x_{k,s}\le 1$. For $|E|=1$, this definition of $\Delta_j$ is in agreement with (\ref{finitesimplex}).
The following integral representation is the natural extension of \cite[Lemma 3.1]{MoehleSagitov2001}. 
\begin{proposition} \label{int1prop}
   If the limits $\phi_j(T)$ exist for all $j\in\nz_0^E$ and all diagonal tensors $T=(t_{k,\ell})_{k,\ell\in E}\in{\cal T}_j$ with diagonal entries $t_{k,k}\in\{2,3,\ldots\}^{j_k}$ for all $k,\ell\in E$, then there exists for every $j\in\nz_0^E$ a finite measure $Q_j$ on $\Delta_j$ uniquely determined by its moments
   \begin{equation} \label{int1}
      \int_{\Delta_j} \prod_{k\in E}\prod_{s=1}^{j_k}
      x_{k,s}^{i_{k,s}-2}
      \,Q_j\big({\rm d}((x_{k,s})_{k\in E,s\in[j_k]})\big)
      \ =\ \phi_j(T),
   \end{equation}
   $i_{k,s}\in\{2,3,\ldots\}$, $k\in E$, $s\in[j_k]$, where $T:=(t_{k,\ell})_{k,\ell\in E}$ is the diagonal tensor with diagonal entries $t_{k,k}:=(i_{k,s})_{s\in[j_k]}$, $k\in E$. Moreover, the total masses of the family of measures $Q_j$, $j\in\nz_0^E$, satisfy $Q_j(\Delta_j)\ge Q_{j'}(\Delta_{j'})$ for all $j,j'\in\nz_0^E$ with $j\le j'$.
\end{proposition}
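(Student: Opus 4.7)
The plan is to apply the multi-dimensional Hausdorff moment problem on the simplex $\Delta_j$ for each fixed $j=(j_k)_{k\in E}\in\nz_0^E$. Index the coordinates of $\Delta_j$ by $I_j:=\{(k,s):k\in E,\,s\in[j_k]\}$ and, for a multi-index $\alpha=(\alpha_{k,s})_{(k,s)\in I_j}\in\nz_0^{I_j}$, let $T_\alpha$ denote the diagonal tensor with diagonal entries $t_{k,k}=(\alpha_{k,s}+2)_{s\in[j_k]}$. The claim (\ref{int1}) is then equivalent to asserting that $m_j(\alpha):=\phi_j(T_\alpha)$ is the $\alpha$-th moment of a finite measure $Q_j$ on $\Delta_j$, which recasts the problem as a multivariate moment problem on a compact set.

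First I would verify the classical Hausdorff complete-monotonicity criterion
\[
(\Delta^\gamma m_j)(\alpha)\ :=\ \sum_{\gamma'\le\gamma}(-1)^{|\gamma'|}\binom{\gamma}{\gamma'}m_j(\alpha+\gamma')\ \ge\ 0
\]
for all finitely supported $\alpha,\gamma\in\nz_0^{I_j}$, together with the additional inequality encoding the simplex constraint $\sum_{(k,s)}x_{k,s}\le 1$; by the multivariate Hausdorff moment theorem these conditions yield a finite measure on $\Delta_j$ with the prescribed moments, and uniqueness is automatic since $\Delta_j$ is compact and polynomials are dense in $C(\Delta_j)$ by the Stone--Weierstrass theorem. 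The first-order signs are immediate from (\ref{phimon}). For the higher-order differences the strategy is to iterate the diagonal consistency equation (\ref{consisdiag}), following the single-type argument in \cite[Lemma~3.1]{MoehleSagitov2001}, to rewrite each alternating sum as a non-negative combination of $\phi_{j'}$-values evaluated at diagonal tensors with all entries $\ge 2$.

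For the total-mass assertion, specializing (\ref{int1}) to $\alpha=0$ gives $Q_j(\Delta_j)=\phi_j(T_j^\star)$, where $T_j^\star$ denotes the diagonal tensor with every diagonal entry equal to $2$. If $j\le j'$ in $\nz_0^E$, then $T_j^\star\le T_{j'}^\star$ in the sense of Corollary~\ref{monotonecorollary}, and neither of these tensors equals $\mathbf{1}_j$ or $\mathbf{1}_{j'}$ respectively (all entries equal $2$, not $1$), so the monotonicity (\ref{phimon}) yields $Q_{j'}(\Delta_{j'})=\phi_{j'}(T_{j'}^\star)\le\phi_j(T_j^\star)=Q_j(\Delta_j)$, as required.

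The main obstacle I anticipate is the verification of the higher-order complete monotonicity. The consistency equation (\ref{consisdiag}) furnishes only a single first-order identity per application, and the ``singleton-appended'' summand $\phi_{j+e_\ell}(T(\ell,\ell))$ appearing in it lives outside the entries-$\ge 2$ domain where the moments $m_j(\alpha)$ are defined. Iteratively unwinding these substitutions without leaving the permissible domain will require careful combinatorial bookkeeping simultaneously across all $(k,s)$-coordinates of $I_j$; this is the multi-type analogue of the main computation in \cite[Lemma~3.1]{MoehleSagitov2001} and is where the bulk of the technical work will lie.
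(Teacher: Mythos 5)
Your plan hinges on verifying the multivariate Hausdorff moment conditions for the array $m_j(\alpha):=\phi_j(T_\alpha)$, and that is precisely the step you do not carry out; it is not routine bookkeeping. What the consistency relation (\ref{consisdiag}) gives at first order is a \emph{simplex-type} difference, $\phi_j(T)-\sum_{s=1}^{j_\ell}\phi_j(T(\ell,\ell,s))=\phi_{j+e_\ell}(T(\ell,\ell))\ge 0$, in which one subtracts the sum over all coordinates of a fixed colour and the remainder is a rate with an appended entry $1$, i.e.\ outside the entries-$\ge 2$ range on which your moment array is defined. The condition you actually state (product-form complete monotonicity on the cube, plus an unspecified ``inequality encoding the simplex constraint'') would have to be manufactured from (\ref{consisdiag}) and (\ref{phimon}) by an induction that repeatedly applies consistency to tensors containing entries equal to $1$ and controls the resulting mixed configurations; you neither formulate the correct simplex version of the Hausdorff criterion nor perform this induction, and you acknowledge it as the main obstacle. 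Note also that this route needs the limits $\phi_{j'}$ to exist (and the limiting consistency and monotonicity to hold) for tensors with entries equal to $1$, which is more than the proposition's stated hypothesis of existence for diagonal tensors with entries in $\{2,3,\ldots\}$. So, as written, the proof has a genuine hole at its central step; only the uniqueness remark (compactness plus Stone--Weierstrass) and the total-mass monotonicity via (\ref{phimon}) are complete and match the paper.

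For comparison, the paper avoids verifying any moment conditions by using the pre-limit Cannings structure that is implicit in the hypothesis (the $\phi_j(T)$ arise as limits of $\Phi_j^{(N)}(T)/c_N$): assuming $\phi_j(\mathbf{2}_j)>0$ (otherwise take $Q_j$ to be the zero measure), one defines size-biased random variables $Y_{k,s}$ from the offspring numbers $\nu_{k,k,s}$, observes that $(Y_{k,s}/N_k)_{k\in E,s\in[j_k]}$ is a bona fide random element of the compact simplex $\Delta_j$, shows that its mixed moments converge to $\phi_j(T+\mathbf{2}_j)/\phi_j(\mathbf{2}_j)$, and invokes the method of moments on a compact space to obtain a limiting law $P_j$; then $Q_j:=\phi_j(\mathbf{2}_j)P_j$ satisfies (\ref{int1}). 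If you wish to keep your purely limit-side approach, you must actually establish, by induction on the order of differencing, nonnegativity of the iterated simplex differences of $m_j$ (allowing intermediate tensors with entries $1$, handled via (\ref{consisdiag}) and (\ref{phimon})); otherwise the shorter path is to switch to the paper's pre-limit size-biasing construction.
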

\begin{remark}
   If all the measures $Q_j$, $j=(j_k)_{k\in E}\in\nz_0^E$ with $\sum_{k\in E}j_k>1$, are equal to the zero measure, then the corresponding coalescent $\Pi$ is a multi-type $\Lambda$-coalescent characterized by the family $\Lambda:=(\Lambda_k)_{k\in E}$ of measures $\Lambda_k$ defined via $\Lambda_k:=Q_{e_k}$ for all $k\in E$. The measure $\Lambda_k$ characterizes the rates of multiple collisions of $k$-blocks to a single $k$-block. In particular, simultaneous multiple collisions of ancestral lineages are impossible and as well multiple collisions of $k$-blocks to an $\ell$-block with $\ell\ne k$ are impossible for this coalescent.
\end{remark}
The following theorem extends the `only if' part of Schweinsberg \cite[Theorem 2]{Schweinsberg2000}. It characterizes the rates of an exchangeable multi-type coalescent $\Pi$ in terms of a sequence $(a_k)_{k\in E}\in[0,\infty)^E$ and a single finite measure $\Xi$ on $(\Delta\setminus\{0\})\times E^\nz$. Because of its length the proof of Theorem \ref{main3} is provided in the separate Section \ref{proofmain3}. For $x=(x_i)_{i\in\nz}\in\Delta$ define $(x,x):=\sum_{i\in\nz}x_i^2$. 
\begin{theorem} \label{main3}
   Suppose that the rate functions $\phi_j$ of the multi-type coalescent $\Pi$ are concentrated on diagonal tensors. Then there exists a sequence $(a_k)_{k\in E}\in[0,\infty)^E$ and a finite measure $\Xi$ on $(\Delta\setminus\{0\})\times E^\nz$ such that for all $j=(j_k)_{k\in E}\in\nz_0^E$ and all diagonal tensors $T=(t_{k,\ell})_{k,\ell\in E}$ with diagonal entries $t_{k,k}:=(i_{k,s})_{s\in[j_k]}\in\{2,3,\ldots\}^{j_k}$, the representation
   \begin{equation} \label{xichar}
      \phi_j(T)\ =\ \sum_{k\in E}a_k1_{\{j=\mathbf{e}_k,i_{k,1}=2\}}
      +\int_{(\Delta\setminus\{\mathbf{0}\})\times E^\nz}
      \sum_{m_{k,s}} \prod_{k\in E} \prod_{s=1}^{j_k}
      x_{m_{k,s}}^{i_{k,s}}\,1_{\{k\}}(y_{m_{k,s}})
      \,\frac{\Xi({\rm d}(x,y))}{(x,x)}
   \end{equation}
   holds, where 
   the sum $\sum_{m_{k,s}}$ extents over all pairwise distinct $m_{k,s}\in\nz$, $k\in E$, $s\in[j_k]$.
\end{theorem}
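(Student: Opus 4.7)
The strategy is to adapt Schweinsberg's \cite{Schweinsberg2000} proof of the ``only if'' direction of his Theorem~2 to the multi-type setting, using that, since the rate functions are concentrated on diagonal tensors, every admissible merger is monochromatic: a group of $k$-lineages coalesces into a single new $k$-block, and simultaneous mergers in different colors never mix. This matches exactly the structure of (\ref{xichar}), where the factor $1_{\{k\}}(y_{m_{k,s}})$ forces the droplet indexed by $m_{k,s}$ to carry color $k$ in order to contribute to an $i_{k,s}$-fold $k$-merger.

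First, by the simplified consistency equation (\ref{consisdiag}) together with an induction on the number of diagonal entries equal to one, the values $\phi_j(T)$ on diagonal tensors whose diagonal entries all belong to $\{2,3,\ldots\}$ determine $\phi_j$ on all diagonal tensors. Since the right-hand side of (\ref{xichar}) satisfies the same recursion, it suffices to establish the representation for these ``large-entry'' tensors. Proposition \ref{int1prop} already furnishes, for each $j\in\nz_0^E$, a unique finite measure $Q_j$ on $\Delta_j$ whose moments recover $\phi_j(T)$ on such tensors, with the uniform bound $Q_j(\Delta_j)\le Q_{\mathbf{0}}(\Delta_{\mathbf{0}})$ coming from the monotonicity clause of the proposition.

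Next one separates off the Kingman-type contribution. Define $a_k$ as the atomic mass at the origin of $\Delta$ of the prospective measure $\Xi$ restricted to color $k$, obtained by a limit procedure on $Q_{e_k}$ near the ``dust'' regime of $\Delta_{e_k}$. This single atom reproduces precisely the term $a_k\,1_{\{j=\mathbf{e}_k,\,i_{k,1}=2\}}$ and accounts for the only case in which the integrand in (\ref{xichar}) would develop a non-integrable singularity at $x=0$. It then remains to produce a single finite measure $\Xi$ on $(\Delta\setminus\{\mathbf{0}\})\times E^\nz$ realising (\ref{xichar}) for all diagonal tensors with entries $\ge 2$ simultaneously. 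The natural candidate interprets $(x,y)$ as a colored paintbox — the $i$-th droplet has size $x_i$ and color $y_i$ — and regards $\Xi/(x,x)$ as the intensity of a Poisson point process of coloured paintbox events, the distinctness constraint on the indices $m_{k,s}$ encoding that each lineage falls into at most one droplet per event.

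The main obstacle is the measure-theoretic construction of $\Xi$ itself: the convention $x_1\ge x_2\ge\cdots$ on $\Delta$ is incompatible with the labelling $y_i\in E$, since reordering the $x_i$ would scramble the $y_i$. One therefore works first on the Polish space of unordered colored pigments, performs Schweinsberg's moment-matching step there (using Stone--Weierstrass density on the compactification of $\Delta$ and the uniform bound on $Q_j(\Delta_j)$), and then descends to $(\Delta\setminus\{\mathbf{0}\})\times E^\nz$ by ordering the $x_i$ decreasingly and carrying the $y_i$ along. The integrand in (\ref{xichar}) is symmetric under joint permutations of the pairs $(x_i,y_i)$, so the descended measure is well-defined and independent of ties in $x$. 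Finiteness of $\Xi$ follows from the uniform moment bounds, and the general consistency (\ref{phiconsis}), combined with the monotonicity (\ref{phimon}), extends (\ref{xichar}) to all diagonal tensors, some of whose entries may equal one.
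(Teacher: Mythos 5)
Your proposal names Schweinsberg's argument but does not actually follow it, and the route you substitute has a genuine gap at its core: the construction of the single finite measure $\Xi$ on $(\Delta\setminus\{\mathbf{0}\})\times E^\nz$ together with the constants $a_k$ \emph{is} the content of Theorem \ref{main3}, and in your outline it is only asserted. Proposition \ref{int1prop} gives, for each fixed $j$, a measure $Q_j$ on the finite-dimensional simplex $\Delta_j$; passing from this family to one measure on the infinite colored simplex is the hard step, and the ``moment-matching step using Stone--Weierstrass on the compactification of $\Delta$'' that you delegate to \cite{Schweinsberg2000} does not exist there: the `only if' part of his Theorem 2 is a probabilistic construction, not a moment argument. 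Concretely, you never explain where the weight $1/(x,x)$ in (\ref{xichar}) comes from, how the cross-$j$ compatibility of the $Q_j$ (their size-biased projections onto one another) is verified, or how the colors $y_i$ are attached to the frequencies beyond the remark about reordering; likewise ``a limit procedure on $Q_{e_k}$ near the dust regime'' is not a definition of $a_k$. A smaller slip: the bound $Q_j(\Delta_j)\le Q_{\mathbf{0}}(\Delta_{\mathbf{0}})$ is vacuous, since for $j=\mathbf{0}$ the relevant limit is $0$; the usable uniform bound is $Q_j(\Delta_j)=\phi_j(\mathbf{2}_j)\le\phi_{e_k}(\mathbf{2}_{e_k})$ for any $k$ with $j_k\ge 1$, by (\ref{phimon}).

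What the paper actually does, and what some replacement of equal substance must do, is build $\Xi$ from the coalescent itself. Let $S$ be the first time $1$ and $2$ merge and $E_n$ the event that $1,\ldots,n$ lie in distinct blocks of $\Pi_{S-}$; the conditional laws $\Theta_n$ of $\Pi_S$ given $E_n$ are shown to be consistent via (\ref{thetadist}), i.e.\ $\pr(\Theta_m=\theta)=\phi_j(T)/\phi_1(\mathbf{2})$, and Daniell--Kolmogorov yields a labeled partition $\Theta$ of $\nz$. Exchangeability of its restriction $\Theta'$ to $\{3,4,\ldots\}$ gives, by Aldous' paintbox argument, colored asymptotic frequencies $(P_i,K_i)$. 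The decisive step is Lemma \ref{local}, proved by an exchangeability swap between the pairs $\{1,2\}$ and $\{k-1,k\}$ at their respective first-merger times: it identifies $\pr(Z_1=i+x_\ell\,|\,{\cal G})=1_{\{K_i=\ell\}}P_i^2/(P,P)$ on $\{P_1>0\}$, and this size-biasing of the block containing $1$ and $2$ is exactly what produces the factor $1/(x,x)$ in (\ref{xichar}); the dust event $\{P_1=0\}$ produces the Kingman part, $a_k=\phi_1(\mathbf{2})\pr(P_1=0,L_1=k)$, and finally $\Xi$ is $\phi_1(\mathbf{2})$ times the law of $(P,K)$ restricted to $(\Delta\setminus\{\mathbf{0}\})\times E^\nz$. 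None of this machinery (nor an equivalent) appears in your proposal, so as it stands the argument reduces the theorem to itself.
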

\begin{remark}
   For the single-type case, (\ref{xichar}) reduces to the well-known representation of the rates in terms of a finite measure $\Xi$ on $\Delta$ via
   \[
   \phi_j(i_1,\ldots,i_j)\ =\ a1_{\{j=1,i_1=2\}}+\int_{\Delta\setminus\{\mathbf{0}\}} \sum_{\substack{m_1,\ldots,m_j\in\nz\\ \rm all\;distinct}}
   \prod_{s=1}^j x_{m_s}^{i_s}\,\frac{\Xi({\rm d}x)}{(x,x)}
   \]
   for all $j\in\nz$ and $i_1,\ldots,i_j\in\{2,3,\ldots\}$, where $a:=\Xi(\{\mathbf{0}\})$.
\end{remark}
\begin{remark}
   Although rather different in detail, the representation (\ref{xichar}) shares some structural similarities with the L\'evy-Khintchine formula for infinitely divisible distributions on multi-dimensional spaces. We refer the reader exemplary to Applebaum \cite{Applebaum2009} for some background on L\'evy processes and infinite divisibility. The first part $\sum_{k\in E}a_k1_{\{j=e_k,i_{k,1}=2\}}$ in (\ref{xichar}), called the Kingman part, characterizing the binary merging rates of the multi-type exchangeable coalescent. This part can in some sense be viewed as a `normal' or `Brownian' part reflecting the generally accepted statement that the Kingman coalescent is the `normal distribution' of ancestral population genetics. The second (integral) part in (\ref{xichar}) characterises the simultaneous multiple merger rates (resulting from large offspring sizes). This part can be viewed as a sort of jump L\'evy part.
\end{remark}

\subsection{Examples} \label{examples}
In the following two examples the numbers $N_{k,\ell}$, $k,\ell\in E$, are (mainly) deterministic. For simplicity it is also assumed that the type space $E$ is finite, i.e., $d:=|E|\in\nz$.
\subsubsection{Multi-type Wright--Fisher model} \label{mwfm}
Let $N_{k,\ell}\in\nz_0$, $k,\ell\in E$, be given non-negative integer constants with (see (\ref{size1})) $N_\ell:=\sum_{k\in E}N_{k,\ell}\in\nz$ for all $\ell\in E$. Assume that for every $k,\ell\in E$ the vector $\nu_{k,\ell}:=(\nu_{k,\ell,i})_{i\in[N_k]}$ has
a symmetric multinomial distribution with parameters $N_{k,\ell}$ and $(1/N_k)_{i\in[N_k]}$. Assume furthermore that the $\nu_{k,\ell}$, $k,\ell\in E$, are independent. We call this model the \emph{multi-type Wright--Fisher model}. In this case the joint distribution of the offspring sizes is given by
\begin{equation} \label{wfmjoint}
   \pr\bigg(
      \bigcap_{k,\ell\in E}\bigcap_{i=1}^{N_k}\{\nu_{k,\ell,i}=n_{k,\ell,i}\}
   \bigg)
   \ =\ \prod_{k,\ell\in E}
        \frac{N_{k,\ell}!}{\prod_{i=1}^{N_k}n_{k,\ell,i}!}
        \bigg(\frac{1}{N_k}\bigg)^{N_{k,\ell}}
\end{equation}
for all $n_{k,\ell,i}\in\nz_0$ with $\sum_{i=1}^{N_k}n_{k,\ell,i}=N_{k,\ell}$ for all $k,\ell\in E$. It is readily seen from (\ref{wfmjoint}) that (A), (A1) and (A2) hold.

Any multinomial random variable $X:=(X_s)_{s\in S}$ with at most countable set $S$ and parameters $n\in\nz_0$ and $(p_s)_{s\in S}$ has joint descending factorial moments $\me(\prod_{s\in S}(X_s)_{i_s})
=(n)_{\sum_{s\in S}i_s}\prod_sp_s^{i_s}$ for all $(i_s)_{s\in S}\in\nz_0^S$. Thus, the transition probability (\ref{rtrans2}) of the ancestral process takes the form
\begin{eqnarray}
   p_{\pi,\pi'}
   & = & \prod_{k\in E}\bigg(
            \frac{(N_k)_{j_k}}{(N_k)_{i_k}}
            \prod_{\ell\in E}
            \me\bigg(\prod_{s=1}^{j_k}(\nu_{k,\ell,s})_{i_{k,\ell,s}}\bigg)
         \bigg)\nonumber\\
   & = & \prod_{k\in E}\bigg(
         \frac{(N_k)_{j_k}}{(N_k)_{i_k}}
         \prod_{\ell\in E}
         \frac{(N_{k,\ell})_{i_{k,\ell}}}{N_k^{i_{k,\ell}}}\bigg)
   \ =:\ \Phi_j(T), \label{wfmtrans}
\end{eqnarray}
where $i_{k,\ell}:=\sum_{s=1}^{j_k}i_{k,\ell,s}$ for all $k,\ell\in E$. In particular, the diagonal entries are
\begin{equation} \label{wfmdiagonal}
   p_{\pi,\pi}\ =\ \prod_{k\in E}\frac{(N_{k,k})_{i_k}}{N_k^{i_k}},\qquad \pi\in{\cal P}_{n,E},
\end{equation}
since for $\pi=\pi'$, $i_{k,\ell,s}=\delta_{k,\ell}$ (Kronecker symbol) and, hence, $i_{k,\ell}=j_k\delta_{k,\ell}$, $k,\ell\in E$. For the single-type case $E=\{1\}$, (\ref{wfmtrans}) reduces to the well-known expression $p_{\pi,\pi'}=(N)_j/N^i$ for the classical (single-type) Wright--Fisher model with $N:=N_1$, $i:=i_1$ and $j:=j_1$. For the two-type case $E=\{1,2\}$ and sample size $n=2$, the state space ${\cal P}_{n,E}={\cal P}_{2,\{1,2\}}$ consists of the six states
\[
\begin{array}{lll}
   \pi_1\ :=\ \{(\{1,2\},1)\}, &
   \pi_2\ :=\ \{(\{1,2\},2)\}, &
   \pi_3\ :=\ \{(\{1\},1),(\{2\},1)\},\\
   \pi_4\ :=\ \{(\{1\},1),(\{2\},2)\}, &
   \pi_5\ :=\ \{(\{1\},2),(\{2\},1)\}, &
   \pi_6\ :=\ \{(\{1\},2),(\{2\},2)\}.
\end{array}
\]
In this case, by (\ref{wfmtrans}), the transition matrix $P=(p_{\pi_i,\pi_j})_{1\le i,j\le 6}$ of the ancestral process is given by
\[
P\ =\ \left(
   \begin{array}{cc|cccc}
   m_{11} & m_{12} & 0 & 0 & 0 & 0\\
   m_{21} & m_{22} & 0 & 0 & 0 & 0\\
   \hline
   \frac{(N_{1,1})_2}{N_1(N_1)_2} & \frac{(N_{2,1})_2}{(N_1)_2N_2} & \frac{(N_{1,1})_2}{N_1^2} & \frac{N_{1,1}N_{2,1}}{(N_1)_2} & \frac{N_{1,1}N_{2,1}}{(N_1)_2} & \frac{(N_2)_2(N_{2,1})_2}{(N_1)_2N_2^2}\\
   \frac{N_{1,1}N_{1,2}}{N_1^2N_2} & \frac{N_{2,1}N_{2,2}}{N_1N_2^2} & \frac{(N_1)_2N_{1,1}N_{1,2}}{N_1^3N_2} & \frac{N_{1,1}N_{2,2}}{N_1N_2} & \frac{N_{1,2}N_{2,1}}{N_1N_2} & \frac{(N_2)_2N_{2,1}N_{2,2}}{N_1N_2^3}\\
   \frac{N_{1,1}N_{1,2}}{N_1^2N_2} & \frac{N_{2,1}N_{2,2}}{N_1N_2^2} & \frac{(N_1)_2N_{1,1}N_{1,2}}{N_1^3N_2} & \frac{N_{1,2}N_{2,1}}{N_1N_2} & \frac{N_{1,1}N_{2,2}}{N_1N_2} & \frac{(N_2)_2N_{2,1}N_{2,2}}{N_1N_2^3}\\
   \frac{(N_{1,2})_2}{N_1(N_2)_2} & \frac{(N_{2,2})_2}{N_2(N_2)_2} & \frac{(N_1)_2(N_{1,2})_2}{N_1^2(N_2)_2} & \frac{N_{1,2}N_{2,2}}{(N_2)_2} & \frac{N_{1,2}N_{2,2}}{(N_2)_2} & \frac{(N_{2,2})_2}{N_2^2} \\
   \end{array}
\right),
\]
where $m_{k,\ell}:=N_{\ell,k}/N_k$, $k,\ell\in E$, denote the backward mutation probabilities.

The coalescence probabilities (\ref{coal1}) and (\ref{coal2}) reduce to
\begin{equation} \label{wfmcoal1}
   c_{k,\ell}(N_k,N_\ell)
   \ =\ \frac{N_k}{(N_\ell)_2}\me((\nu_{k,\ell,1})_2)
   \ =\ \frac{(N_{k,\ell})_2}{N_k(N_\ell)_2},\qquad k,\ell\in E,
\end{equation}
and, for $k,\ell_1,\ell_2\in E$ with $\ell_1\ne\ell_2$,
\begin{eqnarray}
   c_{k,\ell_1,\ell_2}(N_k,N_{\ell_1},N_{\ell_2})
   & = & \frac{N_k}{N_{\ell_1}N_{\ell_2}}
         \me(\nu_{k,\ell_1,1})\me(\nu_{k,\ell_2,1})\nonumber\\
   & = & \frac{N_k}{N_{\ell_1}N_{\ell_2}}
         \frac{N_{k,\ell_1}}{N_k}\frac{N_{k,\ell_2}}{N_k}
   \ = \ \frac{N_{k,\ell_1}N_{k,\ell_2}}{N_kN_{\ell_1}N_{\ell_2}}.
   \label{wfmcoal2}
\end{eqnarray}
Clearly, the consistency equation (\ref{consis}) holds by Proposition \ref{consisprop}. Alternatively, (\ref{consis}) is verified directly as follows. From (\ref{wfmtrans}) it is readily seen that
\[
\Phi_{j+e_k}(T(k,\ell))
\ =\ \Phi_j(T)\frac{N_k-j_k}{N_\ell-i_\ell}\frac{N_{k,\ell}-i_{k,\ell}}{N_k}
\ =\ \frac{\Phi_j(T)}{N_\ell-i_\ell}\bigg(1-\frac{j_k}{N_k}\bigg)(N_{k,\ell}-i_{k,\ell})
\]
and
\[
\Phi_j(T(k,\ell,s))
\ =\ \Phi_j(T)\frac{1}{N_\ell-i_\ell}\frac{N_{k,\ell}-i_{k,\ell}}{N_k}
\ =\ \frac{\Phi_j(T)}{N_\ell-i_\ell}\frac{1}{N_k}(N_{k,\ell}-i_{k,\ell}).
\]
In particular, the latter expression does not depend on $s$. Therefore,
\begin{eqnarray*}
   &   & \hspace{-15mm}\sum_{k\in E}\Phi_{j+e_k}(T(k,\ell))
         +\sum_{k\in E}\sum_{s=1}^{j_k} \Phi_j(T(k,\ell,s))\\
   & = & \frac{\Phi_j(T)}{N_\ell-i_\ell}\sum_{k\in E}
         \bigg(1-\frac{j_k}{N_k}\bigg)(N_{k,\ell}-i_{k,\ell})
         + \frac{\Phi_j(T)}{N_\ell-i_\ell}\sum_{k\in E} \frac{j_k}{N_k}(N_{k,\ell}-i_{k,\ell})\\
   & = & \frac{\Phi_j(T)}{N_\ell-i_\ell}\sum_{k\in E}(N_{k,\ell}-i_{k,\ell})
   \ = \ \frac{\Phi_j(T)}{N_\ell-i_\ell}(N_\ell-i_\ell)
   \ = \ \Phi_j(T),
\end{eqnarray*}
which is (\ref{consis}).

\subsubsection*{A weak mutation example} \label{weak}
In the following all convergence statements and all asymptotic relations are meant as $N_{\rm min}:=\min_{k\in E}N_k\to\infty$. Assume that $N_{k,k}/N_k\to 1$ as $N_{\rm min}\to\infty$ for all $k\in E$ or, equivalently, that $N_{k,\ell}/N_\ell\to 0$ as $N_{\rm min}\to\infty$ for all $k,\ell\in E$ with $k\ne\ell$. Note that this corresponds to a weak mutation regime. From $\me(\nu_{k,k,1})=N_{k,k}/N_k\to 1$ and $\me(\nu_{k,k,1}\nu_{k,k,2})=(N_{k,k})_2/N_k^2\to 1$ for all $k\in E$ we conclude (Lemma \ref{identitylemma}) that $P_N\to I$.
Let us also assume that the subpopulation sizes $N_k$ satisfy the calibration assumption
\begin{equation} \label{calibration}
   \frac{N_{\rm min}}{N_k}\ \to\ a_k
\end{equation}
as $N_{\rm min}\to\infty$ for some given real constants $a_k\ge 0$. If $a_k>0$ for all $k\in E$ then (\ref{calibration}) means that all subpopulation sizes are asymptotically of the same order. From (\ref{wfmcoal1}) and (\ref{wfmcoal2}) it follows for all $k,\ell\in E$ that $N_kc_{k,\ell}(N_k,N_\ell)\to\delta_{k,\ell}$ as $N_{\rm min}\to\infty$
and for all $k,\ell_1,\ell_2\in E$ with $\ell_1\ne\ell_2$ that $N_kc_{k,\ell_1,\ell_2}(N_k,N_{\ell_1},N_{\ell_2})\to 0$ as $N_{\rm min}\to\infty$. Recall that $N:=(N_k)_{k\in E}$. The natural choice for the time scaling $c_N$ is
\begin{equation} \label{wfmscaling}
   c_N\ :=\ \frac{1}{N_{\rm min}}\ =\ \frac{1}{\min_{k\in E}N_k}.
\end{equation}
Comparing the coalescence probability (\ref{wfmcoal1}) with $c_N$ we have
\[
\frac{c_{k,\ell}(N_k,N_\ell)}{c_N}\ =\ N_{\min}c_{k,\ell}(N_k,N_\ell)
\ \sim\ a_kN_k c_{k,\ell}(N_k,N_\ell)\ \to\ a_k\delta_{k,\ell},
\]
which shows that, in the limit, binary mergers of two $\ell$-individuals to a $k$-individual are only possible for $k=\ell$ (at the rate $a_k$). Similarly, for $k,\ell_1,\ell_2\in E$ with $\ell_1\ne\ell_2$,
\[
\frac{c_{k,\ell_1,\ell_2}(N_k,N_{\ell_1},N_{\ell_2})}{c_N}
\ =\ a_kN_kc_{k,\ell_1,\ell_2}(N_k,N_{\ell_1},N_{\ell_2})
\ \to\ 0,
\]
showing that binary mergers of individuals of different types are impossible in the limit. Having the monotonicity (Corollary \ref{monotonecorollary})) in mind, it follows that the assumptions of Theorem \ref{main2} hold with all the limits $\phi_j(T)$ equal to zero except for $\phi_{e_k}(T)=a_k$ in case of a binary merging of two $k$-individuals to a single $k$-individual. Thus, Theorem \ref{main2} is applicable. The limiting Markov chain $\Pi^{(n)}$ in Theorem \ref{main2} is a multi-type Kingman $n$-coalescent in the sense that binary mergers of two ancestral lineages of the same type $k$ to a single $k$-individual occur at the rate $a_k$. In particular, during any binary merging event a change of type is impossible. The measures $Q_j$ in Proposition \ref{int1prop} are all equal to the zero measure except for $Q_{e_k}$ being a Dirac measure on $\Delta_{e_k}$ assigning mass $a_k$ to the single point $0\in\Delta_{e_k}$, $k\in E$. The measure $\Xi$ in Theorem \ref{main3} is the zero measure on $(\Delta\setminus\{0\})\times E^\nz$. If $a_k=0$, then there is no activity in subpopulation $k$, which could be interpreted as a sort of seed bank. For general information on dormancy and seed bank models we refer the reader to the survey article \cite{BlathKurtSlowikWilkeBerenguer2024} and the references therein. Similar multi-type Kingman ($n$-)coalescents, even with additional mutation forces, occur for example in \cite[p.~4219]{JohnstonKyprianouRogers2023} and \cite[p.~107]{Moehle2024}.

\subsubsection*{A strong mutation example} \label{strong}
Let $M\in\nz$ and assume that $N_{k,\ell}=M$ for all $k,\ell\in E$, corresponding to strong mutation. Then, $N_k=dM$, $k\in E$ and, hence, $N_{\rm min}:=\min_{k\in E}N_k=dM$. Both coalescence probabilities, (\ref{wfmcoal1}) and (\ref{wfmcoal2}), are asymptotically equal to $1/(dM^3)=1/(d^2N_{\rm min})$. Therefore, a reasonable time-scaling is $c_N:=1/N_{\rm min}$. From (\ref{wfmtrans}) it follows for all $\pi,\pi'\in{\cal P}_{n,E}$ with $\pi\subseteq\pi'$ that
\begin{eqnarray*}
   p_{\pi,\pi'}
   & = & \frac{\prod_{k\in E}(N_{\rm min})_{j_k}}{\prod_{\ell\in E}(N_{\rm min})_{i_\ell}}
\frac{\prod_{k,\ell\in E}(N_{\rm min}/d)_{i_{k,\ell}}}{\prod_{\ell\in E}N_{\rm min}^{i_\ell}}\\
   & = & \frac{N_{\rm  min}^{|\pi'|}\Big(1-c_N\sum_{k\in E}\binom{j_k}{2}+o(c_N)\Big)}{N^{|\pi|}\Big(1-c_N\sum_{\ell\in E}\binom{i_\ell}{2}+o(c_N)\Big)}
         \frac{(\frac{N_{\rm min}}{d})^{|\pi|}\Big(1-\frac{d}{N}\sum_{k,\ell\in E}\binom{i_{k,\ell}}{2}+o(c_N)\Big)}{N_{\rm min}^{|\pi|}}\\
   & = & \frac{N_{\rm min}^{|\pi'|-|\pi|}}{d^{|\pi|}}
         \big(1+c_N \kappa(\pi,\pi')+o(c_N)\big)
\end{eqnarray*}
with $\kappa(\pi,\pi')\ :=\ \sum_{\ell\in E}\binom{i_\ell}{2}-\sum_{k\in E}\binom{j_k}{2}-d\sum_{k,\ell\in E}\binom{i_{k,\ell}}{2}$. Thus, the transition matrix $P_N$ of the ancestral process satisfies $P_N=A+c_NB+o(c_N)$ as $N_{\rm min}\to\infty$, where the matrices $A:=(a_{\pi,\pi'})_{\pi,\pi'\in{\cal P}_{n,E}}$ and $B:=(b_{\pi,\pi'})_{\pi,\pi'\in{\cal P}_{n,E}}$ have entries
\[
a_{\pi,\pi'}\ :=\ \left\{
\begin{array}{cl}
   d^{-|\pi|} & \mbox{if $r_n(\pi)=r_n(\pi')$},\\
   0 & \mbox{otherwise,}
\end{array}
\right.
\]
and
\[
b_{\pi,\pi'}\ :=\ \left\{
\begin{array}{cl}
   d^{-|\pi|}\kappa(\pi,\pi') & \mbox{if $r_n(\pi)=r_n(\pi')$,}\\
   d^{-|\pi|} & \mbox{if $r_n(\pi)\subset r_n(\pi')$ and $|\pi'|=|\pi|-1$},\\
   0 & \mbox{otherwise.}
\end{array}
\right.
\]
The matrix $A$ is stochastic and a projection ($A^2=A$). By Theorem \ref{main1}, the ancestral process $({\cal A}_r^{(n,N)})_{r\in\nz_0}$ converges in $D_{{\cal P}_{n,E}}(\nz_0)$ as $N_{\rm min}\to\infty$ to a limiting discrete-time Markov chain $(\Pi_r^{(n)})_{r\in\nz_0}$ with transition matrix $A$. No merging events occur in this limit. In each time step only the types of the blocks may change. More precisely, in each time step the resulting type of every block is chosen at random and independently of everything else. In this case the support of the limiting function $\phi_j$ in Theorem \ref{main1} is concentrated on tensors $T:=(t_{k,\ell})_{k,\ell\in E}$, where the entries $t_{k,\ell}:=(i_{k,\ell,s})_{s\in[j_k]}$ satisfy the constrains
\begin{equation} \label{tensorconstrains}
\begin{minipage}[c]{10cm}
\begin{itemize}
   \item $i_{k,\ell,s}\in\{0,1\}$ for all $k,\ell\in E$ and $s\in [j_k]$,
   \item $\sum_{k\in E}\sum_{s=1}^{j_k}i_{k,\ell,s}=j_\ell$ for all $\ell\in E$ and
   \item $\sum_{\ell\in E}\sum_{s=1}^{j_\ell}i_{k,\ell,s}=j_k$ for all $k\in E$.
\end{itemize}
\end{minipage}
\end{equation}
In particular, the support of $\phi_j$ is not concentrated on diagonal tensors. For $E=\{1,2,3\}$, an example of a transition step with $j=(j_1,j_2,j_3)=(4,5,7)$ and corresponding tensor
\begin{equation} \label{tensorexample}
T=\left(
\begin{array}{ccc}
   (1,0,0,0) & (0,1,1,0) & (0,0,0,1)\\
   (1,0,0,0,0) & (0,0,0,0,0) & (0,1,1,1,1)\\
   (1,1,0,0,0,0,0) & (0,0,1,1,1,0,0) & (0,0,0,0,0,1,1)
\end{array}
\right)
\end{equation}
is illustrated in Figure \ref{figure2}.
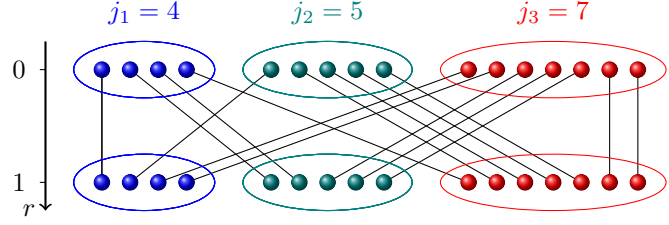
\begin{figure}[htpb]
   \caption[]{A transition step of the limiting process $\Pi$ for $E=\{1,2,3\}$ and $j=(4,5,7)$ corresponding to the tensor $T$ in (\ref{tensorexample})}
   \centering
   \definecolor{myblue}{HTML}{92dcec}
\begin{tikzpicture}[scale=0.75]
   \def \N {16} 
   \def \r {1}
   \def \radius {0.15} 
   
   \draw[->,thick] (-0.5,2*\r+0.5) -- (-0.5,0-0.5) node[below,left] {$r$};
   
   \foreach \y in {0,...,\r}
      \draw (-0.6,2*\r-2*\y) -- (-0.5,2*\r-2*\y) node[left=4pt] {\y};
   
   \draw[semithick] (0.5,0) -- (0.5,2);
   \draw[thin] (1,0) -- (3.5,2);
   \draw[thin] (1.5,0) -- (7,2);
   \draw[thin] (2,0) -- (7.5,2);
   \draw[thin] (3.5,0) -- (1,2);
   \draw[thin] (4,0) -- (1.5,2);
   \draw[thin] (4.5,0) -- (8,2);
   \draw[thin] (5,0) -- (8.5,2);
   \draw[thin] (5.5,0) -- (9,2);
   \draw[thin] (7,0) -- (2,2);
   \draw[thin] (7.5,0) -- (4,2);
   \draw[thin] (8,0) -- (4.5,2);
   \draw[thin] (8.5,0) -- (5,2);
   \draw[thin] (9,0) -- (5.5,2);
   \draw[thin] (9.5,0) -- (9.5,2);
   \draw[thin] (10,0) -- (10,2);
   
   \draw[color=blue] (1.25,2*\r+1) node {\centering $j_1=4$};
   \foreach \n in {0,...,\r} \foreach \i in {1,...,4}
      {
      \draw[color=blue] (1.25,2*\n) ellipse (1.25cm and 5mm);
      \shade[ball color=blue] (\i/2,2*\n) circle (\radius);
      }
   \draw[color=teal] (4.5,2*\r+1) node {\centering $j_2=5$};
   \foreach \n in {0,...,\r} \foreach \i in {1,...,5}
      {
      \draw[color=teal] (4.5,2*\n) ellipse (1.5cm and 5mm);
      \shade[ball color=teal] (3+\i/2,2*\n) circle (\radius);
      }
   \draw[color=red] (8.5,2*\r+1) node {\centering $j_3=7$};
   \foreach \n in {0,...,\r} 
      {
      \draw[color=red] (8.5,2*\n) ellipse(2cm and 5mm);
      
      \foreach \i in {1,...,7} \shade[ball color=red] (6.5+\i/2,2*\n) circle (\radius);
      }
\end{tikzpicture}
   \label{figure2}
\end{figure}

Having the expansion $P_N=A+c_NB+o(c_N)$ in mind, the ancestral structure can be analysed in more detail as follows. By \cite[Theorem 1]{Moehle1998}, the finite-dimensional distributions of the time-scaled ancestral process $({\cal A}_{\lfloor t/c_N\rfloor}^{(n,N)})_{t\ge 0}$ converge as $N_{\rm min}\to\infty$ to those of a continuous-time limiting Markov process $(\widetilde{\Pi}_t^{(n)})_{t\ge 0}$  with state space ${\cal P}_{n,E}$
and transition matrix $Ae^{tG}$, $t>0$, with infinitesimal generator $G:=ABA$. For example, if $n=2$ and $d=2$, then $N_{\rm min}=2M$,
\[
P_N\ =\ \left(
   \begin{array}{cc|cccc}
      \frac{1}{2} & \frac{1}{2} & 0 & 0 & 0 & 0\\
      \frac{1}{2} & \frac{1}{2} & 0 & 0 & 0 & 0\\
      \hline
      \frac{M-1}{4M(2M-1)} & \frac{M-1}{4M(2M-1)} & \frac{M-1}{4M} & \frac{M}{2(2M-1)} & \frac{M}{2(2M-1)} & \frac{M-1}{4M}\\
      \frac{1}{8M} & \frac{1}{8M} & \frac{2M-1}{8M} & \frac{1}{4} & \frac{1}{4}& \frac{2M-1}{8M}\\
      \frac{1}{8M} & \frac{1}{8M} & \frac{2M-1}{8M} & \frac{1}{4} & \frac{1}{4} & \frac{2M-1}{8M}\\
      \frac{M-1}{4M(2M-1)} & \frac{M-1}{4M(2M-1)} & \frac{M-1}{4M} & \frac{M}{2(2M-1)} & \frac{M}{2(2M-1)} & \frac{M-1}{4M}\\
   \end{array}
\right).
\]
and $P_N=A+c_NB+o(c_N)$ as $N_{\rm min}\to\infty$, where
\[
A\ :=\ \left(
   \begin{array}{cc|cccc}
      \frac{1}{2} & \frac{1}{2} & 0 & 0 & 0 & 0\\
      \frac{1}{2} & \frac{1}{2} & 0 & 0 & 0 & 0\\
      \hline
      0 & 0 & \frac{1}{4} & \frac{1}{4} & \frac{1}{4} & \frac{1}{4}\\
      0 & 0 & \frac{1}{4} & \frac{1}{4} & \frac{1}{4} & \frac{1}{4}\\
      0 & 0 & \frac{1}{4} & \frac{1}{4} & \frac{1}{4} & \frac{1}{4}\\
      0 & 0 & \frac{1}{4} & \frac{1}{4} & \frac{1}{4} & \frac{1}{4}\\
   \end{array}
\right)
\quad\mbox{and}\quad
B\ :=\
\left(
   \begin{array}{cc|cccc}
      0 & 0 & 0 & 0 & 0 & 0\\
      0 & 0 & 0 & 0 & 0 & 0\\
      \hline
      \frac{1}{4} & \frac{1}{4} & -\frac{1}{2} & \frac{1}{4} & \frac{1}{4} & -\frac{1}{2}\\
      \frac{1}{4} & \frac{1}{4} & -\frac{1}{4} & 0 & 0 & -\frac{1}{4}\\
      \frac{1}{4} & \frac{1}{4} & -\frac{1}{4} & 0 & 0 & -\frac{1}{4}\\
      \frac{1}{4} & \frac{1}{4} & -\frac{1}{2} & \frac{1}{4} & \frac{1}{4} & -\frac{1}{2}\\
   \end{array}
\right).
\]
We refer the reader to H\"ossjer \cite[Theorem 4.1]{Hoessjer2011} for a similar convergence result for the process counting the total number of ancestral lineages. Related convergence statements and proof methods based on Lemma 1 of \cite{Moehle1998} can be found in Kaj et al. \cite{KajKroneLascoux2001}, Nordborg and Krone \cite{NordborgKrone2002} and Sagitov and Jagers \cite{SagitovJagers2005}. We leave similar convergence results for the associated multi-type block counting process for the interested reader.

One may modify the multi-type Wright--Fisher model by assuming in addition that the $N_{k,\ell}$ are random. For example, one may assume that, for each $\ell\in E$, the vector $(N_{k,\ell})_{k\in E}$ has a multinomial distribution with parameters $N_\ell$ and $(u_{k,\ell})_{k\in E}$ for some given parameters $u_{k,\ell}$ and that these vectors are independent over $\ell\in E$. We leave the technical analysis of this modified Wright--Fisher model for the interested reader.

\subsubsection{A multi-type mutation model without merging events} \label{mutationmodel}
Suppose that in each time step a given number $N_{k,\ell}\in\nz_0$ of individuals of type $k\in E$ mutate to type $\ell\in E$, where the numbers $N_{k,\ell}$, $k,\ell\in E$, satisfy the constrains
\begin{equation} \label{mutation_constrains}
   \sum_{\ell\in E}N_{k,\ell}\ =\ N_k
   \ =\ \sum_{\ell\in E}N_{\ell,k}\ \in\ \nz,\qquad k\in E.
\end{equation}
This mutation model can be formulated in terms of a model with offspring sizes $\nu_{k,\ell,i}$ as described in Section \ref{intro} using the interpretation that `mutation to type $\ell$' is equivalent to `having one offspring of type $\ell$'. Since, with this interpretation, each individual has exactly one offspring, the family sizes $\nu_{k,\ell,i}$ satisfy
\[
\sum_{\ell\in E}\nu_{k,\ell,i}\ =\ 1,\qquad k\in E,i\in[N_k].
\]
In particular, each $\nu_{k,\ell,i}$ takes values in $\{0,1\}$. In contrast to the Wright--Fisher model, for arbitrary but fixed $k\in E$ and $i\in[N_k]$, the offspring sizes $\nu_{k,\ell,i}$, $\ell\in E$, are not independent. The joint distribution of the offspring sizes is obtained as follows. For $k\in E$ define $\nu_k:=(\nu_{k,\ell,i})_{i\in[N_k],\ell\in E}$. It is readily seen that the $\nu_k$, $k\in E$, are independent, where each $\nu_k$ is uniformly distributed on the set $\Omega_k$ consisting of all $(n_{k,\ell,i})_{\ell\in E,i\in[N_k]}$ satisfying
\begin{itemize}
   \item $n_{k,\ell,i}\in\{0,1\}$ for all $\ell\in E$ and $i\in[N_k]$,
   \item $\sum_{i=1}^{N_k}n_{k,\ell,i}=N_{k,\ell}$ for all $\ell\in E$, and
   \item $\sum_{\ell\in E}n_{k,\ell,i}=1$ for all $i\in[N_k]$.
\end{itemize}
Note that $|\Omega_k|=N_k!/\prod_{\ell\in E} N_{k,\ell}!$. Thus, the joint offspring distribution is given by
\begin{equation} \label{mutationjoint}
   \pr\bigg(\bigcap_{k,\ell\in E}\bigcap_{i=1}^{N_k}
   \{\nu_{k,\ell,i}=n_{k,\ell,i}\}\bigg)
   \ =\ \prod_{k\in E}\frac{1}{|\Omega_k|}
   \ =\ \prod_{k\in E}\frac{\prod_{\ell\in E}N_{k,\ell}!}{N_k!}
\end{equation}
for all $n_{k,\ell,i}\in\{0,1\}$, $k,\ell\in E$, $i\in[N_k]$, satisfying $\sum_{i=1}^{N_k}n_{k,\ell,i}=N_{k,\ell}$ for all $k,\ell\in E$ and $\sum_{\ell\in E}n_{k,\ell,i}=1$ for all $k\in E$ and $i\in [N_k]$. In particular, Assumptions (A), (A1) and (A2) hold.

Looking backwards in time no merging events are possible. Thus, a transition of the discrete ancestral process from $\pi\in{\cal P}_{n,E}$ to $\pi'\in{\cal P}_{n,E}$ is only possible if $\pi'$ has the same blocks as $\pi$ (types of the blocks disregarded here). In this case, by (\ref{rtrans2}),
\begin{eqnarray}
   p_{\pi,\pi'}
   & = & \prod_{k\in E}\bigg(\frac{(N_k)_{j_k}}{(N_k)_{i_k}}
         \me\bigg(
            \prod_{\ell\in E}\prod_{s=1}^{j_k}(\nu_{k,\ell,s})_{i_{k,\ell,s}}
         \bigg)\bigg)\nonumber\\
   & = & \prod_{k\in E}\bigg(\frac{(N_k)_{j_k}}{(N_k)_{i_k}}
         \pr\bigg(
            \bigcap_{\ell\in E}
            \bigcap_{\substack{s=1\\ i_{k,\ell,s}=1}}^{j_k}\{\nu_{k,\ell,s}=1\}
         \bigg)\bigg)
   \ = \ \prod_{k\in E}\bigg(\frac{(N_k)_{j_k}}{(N_k)_{i_k}}
         \frac{\frac{(N_k-j_k)!}{\prod_{\ell\in E}(N_{k,\ell}-i_{k,\ell})!}}
         {|\Omega_k|}\bigg)\nonumber\\
   & = & \prod_{k\in E}
         \frac{\prod_{\ell\in E}(N_{k,\ell})_{i_{k,\ell}}}{(N_k)_{i_k}}
   \ = \ \prod_{\ell\in E}
         \frac{\prod_{k\in E}(N_{k,\ell})_{i_{k,\ell}}}{(N_\ell)_{i_\ell}},
         \label{mutation_trans}
\end{eqnarray}
where, for $k,\ell\in E$, $i_{k,\ell}:=\sum_{s=1}^{j_k}i_{k,\ell,s}$ denotes the number of blocks being a $k$-block of $\pi'$ and an $\ell$-block of $\pi$. Note that $i_\ell:=\sum_{k\in E}i_{k,\ell}$ is the number of $\ell$-blocks of $\pi$, $\ell\in E$, and that $j_k:=\sum_{\ell\in E}i_{k,\ell}$ is the number of $k$-blocks of $\pi'$. In particular, the transition matrix of the ancestral process has diagonal entries
\begin{equation}
   p_{\pi,\pi}\ =\ \sum_{k\in E}\frac{(N_{k,k})_{i_k}}{(N_k)_{i_k}},\qquad \pi\in{\cal P}_{n,E}.
\end{equation}
This model is a reformulation of the migration step considered around Figure 2 on p.~105 of \cite{Moehle2024} in terms of a multi-type Cannings model with offspring distribution (\ref{mutationjoint}). A migration from colony $k$ to colony $\ell$ in the model in \cite{Moehle2024} is interpreted as a mutation from type $k$ to type $\ell$. Note that (\ref{mutation_trans}) is in agreement with \cite[Eq.~(12)]{Moehle2024}. Figure 2 of \cite{Moehle2024} shows a graphical representation of one generation step of this model for $E=\{1,2,3\}$, subpopulation sizes $N_1:=4$, $N_2:=5$ and $N_3:=7$, and mutation numbers $N_{1,1}:=1$, $N_{1,2}:=2$, $N_{1,3}:=1$, $N_{2,1}:=1$, $N_{2,2}:=0$, $N_{2,3}:=4$, $N_{3,1}:=2$, $N_{3,2}:=3$ and $N_{3,3}:=2$.

Clearly, the transition probabilities (\ref{mutation_trans}) satisfy $\sum_{\pi'\in{\cal P}_{n,E}}p_{\pi,\pi'}=1$ for all $\pi\in{\cal P}_{n,E}$, which can be also formally verified as follows. Fix $\pi\in{\cal P}_{n,E}$. For $\ell\in E$ let $i_\ell$ denote the number of $\ell$-blocks of $\pi$. For any given matrix $A:=(i_{k,\ell})_{k,\ell\in E}$ with nonnegative integer entries $i_{k,\ell}\in\nz_0$ satisfying $\sum_{k\in E}i_{k,\ell}=i_\ell$ for all $\ell\in E$ let us denote with ${\cal P}(A)$ the subset of all $\pi'\in{\cal P}_{n,E}$ having the same (un-labeled) blocks as $\pi$ and with the property that the number of blocks being a $k$-block of $\pi'$ and an $\ell$-block of $\pi$ is equal to $i_{k,\ell}$. Note that ${\cal P}(A)$ depends on $n,E$ and $\pi$, but this dependence is suppressed in our notation, since it is not important in the following. By (\ref{mutation_trans}),
\[
\sum_{\pi'\in{\cal P}_{n,E}}p_{\pi,\pi'}
\ =\ \sum_A\sum_{\pi'\in {\cal P}(A)} p_{\pi,\pi'}
\ =\ \sum_A |{\cal P}(A)|\prod_{\ell\in E}
     \frac{\prod_{k\in E}(N_{k,\ell})_{i_{k,\ell}}}{(N_\ell)_{i_\ell}},
\]
where the sum $\sum_A$ extents over all matrices $A=(i_{k,\ell})_{k,\ell\in E}$ as described above. Since $|{\cal P}(A)|=\prod_{\ell\in E}\frac{i_\ell!}{\prod_{k\in E}i_{k,\ell}!}$, it follows that
\[
\sum_{\pi'\in{\cal P}_{n,E}}p_{\pi,\pi'}
\ = \ \sum_A \prod_{\ell\in E}
      \frac{\prod_{k\in E}\binom{N_{k,\ell}}{i_{k,\ell}}}{\binom{N_\ell}{i_\ell}}
\ =\ \prod_{\ell\in E} \bigg(
     \sum_{\substack{(i_{k,\ell})_{k\in E}\in\nz_0^E\\ \sum_{k\in E}i_{k,\ell}=i_\ell}}
       \frac{\prod_{k\in E}\binom{N_{k,\ell}}{i_{k,\ell}}}{\binom{N_\ell}{i_\ell}}
    \bigg)
   \ =\ 1,
\]
since the sum inside the last brackets is equal to $1$ (total mass of a multi-hypergeometric distribution).
%
%
%
%

For every $j=(j_k)_{k\in E}\in\nz_0^E$, the support of the function $\Phi_j$ is concentrated on tensors $T$ satisfying the constrains (\ref{tensorconstrains}). Due to these constrains, the general consistency equations in Proposition \ref{consisprop} simplify to
\[
\Phi_j(T)\ =\ \Phi_{j+e_\ell}(T(\ell,\ell))+\sum_{s=1}^{j_\ell}\Phi_j(T(\ell,\ell,s)),
\qquad\ell\in E.
\]
These consistency equations are, except that we are in the discrete model before taking any limit, of the reduced form (\ref{consisdiag}), although the support of the functions $\Phi_j$ is not concentrated on diagonal tensors.

If, for all $k,\ell\in E$, $N_{k,\ell}/N_\ell\to\rho_{k,\ell}$ as $N_{\rm min}\to\infty$ for some constants $\rho_{k,\ell}$, then Theorem \ref{main1} is applicable, since the transition probability in (\ref{mutation_trans}) converges to $\prod_{k,\ell\in E}\rho_{k,\ell}=:a_{\pi,\pi'}$ as $N_{\rm min}\to\infty$. The limiting process $(\Pi_r^{(n)})_{r\in\nz_0}$ in Theorem \ref{main1} has transition probabilities $a_{\pi,\pi'}$ whenever $\pi$ and $\pi'$ have the same blocks (types of the blocks disregarded here).

Since all coalescence probabilities (\ref{coal1}) and (\ref{coal2}) are equal to zero in this model, Theorem \ref{main2} is not applicable with the standard choice (\ref{scaling}) for the time-scaling $c_N$.

\subsection{Proofs} \label{proofs}
We start this section with the proof of the formula (\ref{rtrans2}) for the transition probability $p_{\pi,\pi'}$.
\begin{proof}[Proof of Eq.~(\ref{rtrans2})]
   Let ${\cal G}$ denote the $\sigma$-algebra generated by all the offspring sizes $\nu_{k,\ell,i}$, $k,\ell\in E$, $i\in[N_k]$. Since, independently for each type $\ell\in E$, individuals of type $\ell$ in the child generation are randomly assigned to parental offspring lineages of type $\ell$, it follows conditional on the offspring sizes that
   \[
   \pr({\cal A}_r=\pi'\,|\,{\cal A}_{r-1}=\pi,{\cal G})
   \ =\ \sum_n\prod_{\ell\in E}
   \frac{\prod_{k\in E}\prod_{s=1}^{j_k} (\nu_{k,\ell,n(k,s)})_{i_{k,\ell,s}}}
   {(N_\ell)_{i_\ell}},
   \]
   where the sum $\sum_n$ extents over all $n:=(n(k,s))_{k\in E,s\in[j_k]}$ with $n(k,s)\in[N_k]$ for all $k\in E$ and $s\in[j_k]$ and $n(k,s)\ne n(k,r)$ for all $k\in E$ and $s,r\in[j_k]$ with $s\ne r$. Taking expectation yields
   \[
   p_{\pi,\pi'}\ =\ \frac{1}{\prod_{\ell\in E}(N_\ell)_{i_\ell}}
   \sum_n\me\bigg(\prod_{k,\ell\in E}
   \prod_{s=1}^{j_k} (\nu_{k,\ell,n(k,s)})_{i_{k,\ell,s}}
   \bigg).
   \]
   Exploiting the joint exchangeability (A) it follows that
   \[
   p_{\pi,\pi'}\ =\ \frac{1}{\prod_{\ell\in E}(N_\ell)_{i_\ell}}
   \sum_n\me\bigg(\prod_{k,\ell\in E}
   \prod_{s=1}^{j_k} (\nu_{k,\ell,s})_{i_{k,\ell,s}}
   \bigg).
   \]
   The last expectation does not depend on $n$. Thus, (\ref{rtrans2}) follows from $\sum_n 1=\prod_{k\in E}(N_k)_{j_k}$.
\end{proof}
Next we provide a short and elegant proof of the crucial consistency relation (\ref{consis}).
\begin{proof}[Proof of Proposition \ref{consisprop}]
   Define the random variable $X:=\prod_{k,\ell\in E}\prod_{s\in[j_k]}(\nu_{k,\ell,s})_{i_{k,\ell,s}}$ and the two constants $A:=\prod_{k\in E}(N_k)_{j_k}$ and $B:=\prod_{\ell\in E}(N_\ell)_{i_\ell}$. Now fix $\ell\in E$ with $i_\ell<N_\ell$. For all $k\in E$ it follows from $N_{k,\ell}=\sum_{s=1}^{N_k}\nu_{k,\ell,s}$ and with the notation
   $i_{k,\ell}:=\sum_{s=1}^{j_k}i_{k,\ell,s}$ that
   \begin{eqnarray*}
      \me\big(X(N_{k,\ell}-i_{k,\ell})\big)
      & = & \me\bigg(X\sum_{s=j_k+1}^{N_k}\nu_{k,\ell,s}\bigg)
            + \sum_{s=1}^{j_k}\me\big(X(\nu_{k,\ell,s}-i_{k,\ell,s})\big)\\
      & = & (N_k-j_k)\me(X\nu_{k,\ell,j_k+1}) +   \sum_{s=1}^{j_k}\me\big(X(\nu_{k,\ell,s}-i_{k,\ell,s})\big)
   \end{eqnarray*}
   by the joint exchangeability (A). Summation over all $k\in E$ yields
   \[
   (N_\ell-i_\ell)\me(X)\ =\
   \sum_{k\in E}(N_k-j_k)\me(X\nu_{k,\ell,j_k+1})
   + \sum_{k\in E}\sum_{s=1}^{j_k}\me\big(X(\nu_{k,\ell,s}-i_{k,\ell,s})\big).
   \]
   Multiplication of both sides of this equation with $A/((N_\ell-i_\ell)B)$ $(>0)$ shows that
   \[
   \frac{A}{B}\me(X)\ =\
   \sum_{k\in E}\frac{(N_k-j_k)A}{(N_\ell-i_\ell)B}\me(X\nu_{k,\ell,j_k+1})
   + \sum_{k\in E}\sum_{s=1}^{j_k}
   \frac{A}{(N_\ell-i_\ell)B}\me\big(X(\nu_{k,\ell,s}-i_{k,\ell,s})\big),
   \]
   which is the desired Eq.~(\ref{consis}), since $\frac{A}{B}\me(X)=\Phi_j(T)$ by the definition (\ref{Phi}) of $\Phi_j$, and, similarly, $\frac{(N_k-j_k)A}{(N_\ell-i_\ell)B}\me(X\nu_{k,\ell,j_k+1})=\Phi_{j+e_k}\big(T(k,\ell)\big)$ and $\frac{A}{(N_\ell-i_\ell)B}\me\big(X(\nu_{k,\ell,s}-i_{k,\ell,s})\big)
   =\Phi_j\big(T(k,\ell,s)\big)$.
\end{proof}
\begin{proof}[Proof of Corollary \ref{monotonecorollary}]
   For $j=j'$, (\ref{monotone}) follows from the consistency equation (\ref{consis}). The result thus follows by induction on $\sum_{k\in E}(j'_k-j_k)$ and exploiting in each induction step the consistency equation (\ref{consis}) again.
\end{proof}
\begin{proof}[Proof of Corollary \ref{naturalcoupling}]
   The proof exploits ideas going back at least to Burke and Rosenblatt \cite{BurkeRosenblatt1958}. Since $\varrho_{n,m}=\varrho_{m+1,m}\circ\cdots\circ\varrho_{n,n-1}$, we can and do assume without loss of generality that $m=n-1$. Define $f:=\varrho_{n,m}$. Fix $\pi\in{\cal P}_{n,E}$ and $\tau'\in{\cal P}_{m,E}$. In the following an expression for
   \begin{equation} \label{localsum}
      \pr(f\circ{\cal A}_r^{(n)}=\tau'\,|\,{\cal A}_{r-1}^{(n)}=\pi)\ =\ \sum_{\pi'\in f^{-1}(\tau')}p_{\pi,\pi'}
   \end{equation}
   is derived, which in particular shows that (\ref{localsum}) depends on $\pi$ only via $\tau:=f(\pi)$.

   If $\tau\not\subseteq\tau'$, then $\pi\not\subseteq\pi'$, i.e., $p_{\pi,\pi'}=0$ for all $\pi'\in f^{-1}(\tau')$ and (\ref{localsum}) is equal to zero. Assume now that $\tau\subseteq\tau'$.

   Clearly, the partition $\tau'$ is of the form $\tau'=\{(B_1,k_1),\ldots,(B_a,k_a)\}$ with $a\in[m]$ blocks having types $k_1,\ldots,k_a\in E$ respectively. For $k\in E$ let $j_k:=|\{\alpha\in[a]:k_\alpha=k\}|$ denote the number of $k$-blocks of $\tau'$.

   Since $\tau\subseteq\tau'$, the partition $\tau$ has blocks $B_{\alpha\beta}$, $\alpha\in[a]$, $\beta\in[b_\alpha]$, satisfying $B_\alpha=\bigcup_{\beta=1}^{b_\alpha} B_{\alpha\beta}$ for all $\alpha\in[a]$. Moreover, each block $B_{\alpha\beta}$ of $\tau$ is labeled with some type $\ell_{\alpha\beta}\in E$. For $\ell\in E$ let $i_\ell:=|\{(\alpha,\beta):\ell_{\alpha\beta}=\ell\}|$ denote the number of $\ell$-blocks of $\tau$. Furthermore, for $k,\ell\in E$ and $s\in[j_k]$, let $i_{k,\ell,s}$ denote the groups size of $\ell$-blocks of $\tau$ merging to the $s$-th $k$-block of $\tau'$ and define the tensor $T:=(t_{k,\ell})_{k,\ell\in E}$ via $t_{k,\ell}:=(i_{k,\ell,s})_{s\in[j_k]}$ for all $k,\ell\in E$.

   For $k\in E$ define $\pi_0'(k):=\{(B_1,k_1),\ldots,(B_a,k_a),(\{n\},k)\}\in{\cal P}_{n,E}$ and for $\alpha\in[a]$ define $\pi_\alpha':=\{(B_1,k_1),\ldots,(B_\alpha\cup\{n\},k_\alpha),\ldots,(B_a,k_a)\}\in{\cal P}_{n,E}$.

   If $\{n\}$ is not a block of $\pi$, then there exist $\alpha\in[a]$ and $\beta\in[b_\alpha]$ such that $B_{\alpha\beta}\cup\{n\}$ is a block of $\pi$. Then, $\pi':=\pi_\alpha'$ is the only partition satisfying $\pi\subseteq\pi'$ and $f(\pi')=\tau'$. Hence (\ref{localsum}) is in this case equal to $p_{\pi,\pi_\alpha'}=\Phi_j(T)$.

   Assume now that $\{n\}$ is a block of $\pi$, i.e., there exists $\ell\in E$ such that $(\{n\},\ell)$ is a labeled block of $\pi$. In this case exactly the partitions $\pi'\in\{\pi_0'(k):k\in E\}\cup\{\pi_1',\ldots,\pi_a'\}$ satisfy $\pi\subseteq\pi'$ and $f(\pi')=\tau'$. Therefore,
   \begin{eqnarray*}
      &   & \hspace{-15mm}
            \pr(f\circ{\cal A}_r^{(n)}=\tau'\,|\,{\cal A}_{r-1}^{(n)}=\pi)
      \ = \ \sum_{\pi'\in f^{-1}(\tau')} p_{\pi,\pi'}
      \ = \ \sum_{k\in E} p_{\pi,\pi_0'(k)}
         + \sum_{\alpha=1}^a p_{\pi,\pi_\alpha'}\nonumber\\
   & = & \sum_{k\in E}\Phi_{j+e_k}(T(k,\ell)) + \sum_{k\in E}\sum_{s=1}^{j_k} \Phi_j(T(k,\ell,s))
   \ = \ \Phi_j(T),
   \end{eqnarray*}
   where the last equality holds by Lemma \ref{consis}, the tensor $T(k,\ell)$ is obtained from $T$ by replacing the vector $t_{k,\ell}=(i_{k,\ell,1},\ldots,i_{k,\ell,j_k})$ by the vector $(i_{k,\ell,1},\ldots,i_{k,\ell,j_k},1)$ and the vector $t_{k,\ell'}=(i_{k,\ell',1},\ldots,i_{k,\ell',j_k})$ by the vector $(i_{k,\ell',1},\ldots,i_{k,\ell',j_k},0)$ for all $\ell'\ne\ell$ and the tensor $T(k,\ell,s)$ is obtained from $T$ by replacing the single entry $i_{k,\ell,s}$ by $i_{k,\ell,s}+1$. Thus,
   \begin{equation} \label{con1}
      \pr(f\circ{\cal A}_r^{(n)}=\tau'\,|\,{\cal A}_{r-1}^{(n)}=\pi)
      \ =\ \left\{
      \begin{array}{cl}
         \Phi_j(T) & \mbox{if $f(\pi)\subseteq\tau'$,}\\
         0 & \mbox{otherwise.}
      \end{array}
      \right.
   \end{equation}
   In particular, (\ref{con1}) depends on $\pi$ only via $\tau:=f(\pi)$. Now, since $\{f\circ{\cal A}_{r-1}^{(n)}=\tau\}\ =\ \bigcup_{\pi\in f^{-1}(\tau)}\{{\cal A}_{r-1}^{(n)}=\pi\}$, an elementary calculation shows that (\ref{con1}) implies
   \[
   \pr(f\circ{\cal A}_r^{(n)}=\tau'\,|\,f\circ{\cal A}_{r-1}^{(n)}=\tau)
   \ =\ \left\{
   \begin{array}{cl}
      \Phi_j(T) & \mbox{if $\tau\subseteq\tau'$,}\\
      0 & \mbox{otherwise.}
   \end{array}
   \right.
   \]
   Since $({\cal A}_r^{(n)})_{r\in\nz_0}$ is Markovian, all these calculations remain valid if in the condition the event $\{f\circ{\cal A}_{r-1}^{(n)}=\tau\}$ is replaced by $\{f\circ{\cal A}_{r-1}^{(n)}=\tau,f\circ{\cal A}_{r-2}^{(n)}=\tau_{r-2},\ldots,f\circ{\cal A}_0^{(n)}=\tau_0\}$. Thus, $(f\circ{\cal A}_r^{(n)})_{r\in\nz_0}$ is a Markov chain with the same transition probabilities (and the same initial state) as $({\cal A}_r^{(m)})_{r\in\nz_0}$.
\end{proof}
Let us now turn to the proofs concerning symmetry and exchangeability issues. For the proof of Lemma \ref{exchangelemma} the following result is needed.
\begin{lemma} \label{technical}
   For all $\pi,\pi'\in{\cal P}_{n,E}$ and all $\sigma\in S_n$ we have $p_{\pi,\pi'}=p_{\sigma(\pi),\sigma(\pi')}$.
\end{lemma}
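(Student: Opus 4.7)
The plan is to read off $p_{\pi,\pi'}$ from the explicit formula (\ref{rtrans2}) (equivalently (\ref{Phi})) as $p_{\pi,\pi'}=\Phi_j(T)$, and then compare the combinatorial data $(j,T)$ attached to $(\pi,\pi')$ with the data $(j^\sigma,T^\sigma)$ attached to $(\sigma(\pi),\sigma(\pi'))$. First I would dispose of the degenerate case: since $\sigma$ acts by relabeling individuals and does not alter which blocks of $\pi$ are subsets of which blocks of $\pi'$, we have $\pi\subseteq\pi'$ if and only if $\sigma(\pi)\subseteq\sigma(\pi')$. If this fails, both sides of the identity are $0$ and we are done.

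Assuming $\pi\subseteq\pi'$, the bulk of the argument is to verify that $(j^\sigma,T^\sigma)$ coincides with $(j,T)$ up to an inner permutation of the $s$-index within each $k\in E$. The key observations are that $\sigma$ maps each block $B$ of $\pi$ or $\pi'$ bijectively to a block $\sigma(B)$ of $\sigma(\pi)$ or $\sigma(\pi')$, that the attached type labels are unchanged by definition of the action $\sigma(\{(B_1,k_1),\ldots,(B_j,k_j)\})=\{(\sigma(B_1),k_1),\ldots,(\sigma(B_j),k_j)\}$, and that $B\subseteq C$ iff $\sigma(B)\subseteq\sigma(C)$. Hence the number $j_k$ of $k$-blocks of $\pi'$, the number $i_\ell$ of $\ell$-blocks of $\pi$, and the multiset of group sizes $\{i_{k,\ell,s}\}_{s\in[j_k]}$ of $\ell$-blocks merging into the $s$-th $k$-block of $\pi'$ are all preserved. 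The only thing that may change is the particular enumeration of the $k$-blocks of $\pi'$ (e.g.\ by order of appearance of the minimal element), so $T^\sigma=\tau(T)$ for some tuple $\tau=(\tau_{k,\ell})_{k,\ell\in E}$ with $\tau_{k,\ell}\in S_{j_k}$ acting only on the inner index.

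The conclusion then follows directly from the symmetry property (\ref{phisym}): since $\Phi_j(T)=\Phi_j(\tau(T))$ for every such $\tau$, we obtain
\[
p_{\pi,\pi'}\ =\ \Phi_j(T)\ =\ \Phi_{j^\sigma}(T^\sigma)\ =\ p_{\sigma(\pi),\sigma(\pi')}.
\]

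The step that requires the most care — but is not a genuine obstacle — is the bookkeeping verification that $\sigma$ induces \emph{only} an inner permutation of the $s$-index and not a change in the type-level structure of $T$. Once one is careful to distinguish the intrinsic data (block types, group sizes) from the non-intrinsic data (the chosen ordering of the $k$-blocks of $\pi'$), the identification is transparent, and the symmetry of $\Phi_j$ coming from assumption (A) finishes the proof.
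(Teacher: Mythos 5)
Your proposal is correct and follows essentially the same route as the paper's proof: case-split on whether $\pi\subseteq\pi'$, observe that $\sigma$ preserves block inclusions and type labels so that the tensor attached to $(\sigma(\pi),\sigma(\pi'))$ agrees with $T$ up to permutations of the inner index $s$ within each type $k$, and conclude via the symmetry (\ref{phisym}) of $\Phi_j$ coming from assumption (A). No gaps.
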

\begin{proof}
   Two cases are distinguished. Assume first that $\pi\not\subseteq\pi'$. Then there exists some block $B\in\pi'$ that is not a union of blocks of $\pi$. Thus, for all blocks $A_1,\ldots,A_m$ of $\pi$, where $m\le|\pi|$ ($|\pi|$ denotes the number of blocks of $\pi$) we have $\bigcup_{i=1}^m A_i\ne B$. Thus, also $\bigcup_{i=1}^m \sigma(A_i)\ne\sigma(B)$. Since every block of $\sigma(\pi)$ is of the form $\sigma(A)$ for some block $A$ of $\pi$, it follows that $\sigma(\pi)\not\subseteq\sigma(\pi')$. Thus, $p_{\pi,\pi'}=0=p_{\sigma(\pi),\sigma(\pi')}$.

   Assume now that $\pi\subseteq\pi'$. Let $T=((i_{k,\ell,s})_{s\in[j_k]})_{k,\ell\in E}$, where $j_k$ is the number of $k$-blocks of $\pi'$ and $i_{k,\ell,s}$ denotes the number of $\ell$-blocks of $\pi$ merging to the $s$-th $k$-block of $\pi'$. Note that the number $i_\ell$ of $\ell$-blocks of $\pi$ can be recovered from $T$ via $i_\ell=\sum_{k\in E}\sum_{s=1}^{j_k}i_{k,\ell,s}$. Thus, $p_{\pi,\pi'}=\Phi_j(T)$ depends only via $T$ on $\pi$ and $\pi'$.

   Fix $k,\ell\in E$. For $s\in[j_k]$ let $(A_1,\ell),\ldots,(A_{i_{k,\ell,s}},\ell)$ denote the $\ell$-blocks merging to the $s$-th $k$-block $(B,k)$ of $\pi'$. Then $(\sigma(B),k)$ is a $k$-block of $\sigma(\pi')$ and the $\ell$-blocks of $\sigma(\pi)$ merging to $(\sigma(B),k)$ are exactly $(\sigma(A_1),\ell),\ldots,(\sigma(A_{i_{k,\ell,s}}),\ell)$, since it is straightforward to check that $\sigma(A_m)\subseteq\sigma(B)$ and $A\cap\sigma(B)=\emptyset$ for all $\ell$-blocks $(A,\ell)$ of $\sigma(\pi)$, where $A\ne A_m$ for all $m\in[i_{k,\ell,s}]$. Thus, we have exactly $i_{k,\ell,s}$ $\ell$-blocks of $\sigma(\pi)$ merging to this $k$-block of $\sigma(\pi')$. Firstly, this shows that $\sigma(\pi)\subseteq\sigma(\pi')$, since every $k$-block of $\sigma(\pi')$ is of the form $(\sigma(B),k)$ for some $k$-block $(B,k)$ of $\pi'$.

   In the same way as above, let $T'=((a_{k,\ell,s})_{s\in[j_k]})_{k,\ell\in E}$, where $a_{k,\ell,s}$ denotes the number of $\ell$-blocks of $\sigma(\pi)$ merging to the $s$-th $k$-block of $\sigma(\pi')$. 
   The above secondly shows that $(a_{k,\ell,s})_{s\in[j_k]}$ is a permutation of $(i_{k,\ell,s})_{s\in[j_k]}$. By assumption (A) and writing $j=(j_k)_{k\in E}$, we thus obtain $p_{\pi,\pi'}=\Phi_j(T)=\Phi_j(T')=p_{\sigma(\pi),\sigma(\pi')}$.
\end{proof}
Thanks to Lemma \ref{technical} the proof of Lemma \ref{exchangelemma} is now straightforward and works as follows.
\begin{proof}[Proof of Lemma \ref{exchangelemma}]
   We use induction on $r\in\nz_0$. By assumption ${\cal A}_0$ is exchangeable. The induction step from $r-1$ to $r\in\nz$ works as follows. For all $\pi'\in{\cal P}_{n,E}$ and $\sigma\in S_n$,
   \[
   \pr({\cal A}_r=\pi')
   \ =\ \sum_{\pi\in{\cal P}_{n,E}} p_{\pi,\pi'}\pr({\cal A}_{r-1}=\pi)
   \ =\ \sum_{\pi\in{\cal P}_{n,E}} p_{\pi,\pi'}\pr\big({\cal A}_{r-1}=\sigma(\pi)\big)
   \]
   by the induction hypothesis. Lemma \ref{technical} therefore yields
   \[
   \pr({\cal A}_r=\pi')
   \ =\ \sum_{\pi\in{\cal P}_{n,E}}
         p_{\sigma(\pi),\sigma(\pi')}\pr\big({\cal A}_{r-1}=\sigma(\pi)\big).
   \]
   With $\pi$ also $\tau:=\sigma(\pi)$ extents over all labeled partitions in ${\cal P}_{n,E}$. Therefore,
   \[
   \pr({\cal A}_r=\pi')
   \ =\ \sum_{\tau\in{\cal P}_{n,E}}p_{\tau,\sigma(\pi')}\pr({\cal A}_{r-1}=\tau)
   \ =\ \pr\big({\cal A}_r=\sigma(\pi')\big).
   \]
   Thus, ${\cal A}_r$ is exchangeable.
\end{proof}
We now turn to the proofs concerning asymptotic considerations as $N_{\rm min}:=\min_{k\in E}N_k$ tends to infinity.
\begin{proof}[Proof of Lemma \ref{identitylemma}]
   Since $n\ge 2$, the transition matrix $P_N$ has as entries in particular all the mean backward mutation probabilities $\me(N_{k,\ell})/N_\ell$, $k,\ell\in E$, and as well all coalescence probabilities (\ref{coal1}) and (\ref{coal2}). Exploiting the monotonicity (Corollary \ref{monotonecorollary}) it follows that $P_N\to I$ as $N_{\rm min}\to\infty$ if and only if $\frac{N_k}{N_\ell}\me(\nu_{k,\ell,1})=\me(N_{k,\ell})/N_{\ell}\to\delta_{k,\ell}$ (Kronecker symbol) for all $k,\ell\in E$ and if all coalescence probabilities tend to zero as $N_{\rm min}\to\infty$. From
   \[
   c_{k,k}(N_k,N_k)\ =\ \frac{\me(\nu_{k,k,1}^2)-\me(\nu_{k,k,1})}{N_k-1}
   \]
   and the bounds in Lemma \ref{coalbounds} in the appendix for the coalescence probabilities it therefore follows that $P_N\to I$ is equivalent to
   \begin{equation} \label{equivalent1}
      \frac{\me(N_{k,\ell})}{N_\ell}\ \to\ \delta_{k,\ell}
      \quad\mbox{and}\quad \frac{\me(\nu_{k,k,1}^2)}{N_k-1}\ \to\ 0
      \qquad\mbox{for all $k,\ell\in E$.}
   \end{equation}
   Thus, it suffices to verify that (\ref{equivalent1}) is equivalent to
   \begin{equation} \label{equivalent2}
      \me(\nu_{k,k,1})\ \to\ 1\quad\mbox{and}\quad\me(\nu_{k,k,1}\nu_{k,k,2})\ \to\ 1
      \qquad\mbox{for all $k\in E$.}
   \end{equation}
   Assume first that (\ref{equivalent1}) holds. Then, in particular $\me(\nu_{k,k,1})=\me(N_{k,k})/N_k\to 1$. Moreover, since $N_{k,k}/N_k\le 1$, we obtain for the second moment of $N_{k,k}/N_k$ the upper bound
   \begin{equation} \label{upperbound}
      \frac{\me(N_{k,k}^2)}{N_k^2}\ \le\ \frac{\me(N_{k,k})}{N_k}
      \ \to\ 1
   \end{equation}
   and, by Jensen's inequality, the lower bound
   \begin{equation} \label{lowerbound}
      \frac{\me(N_{k,k}^2)}{N_k^2}\ \ge\ \bigg(\frac{\me(N_{k,k})}{N_k}\bigg)^2
      \ \to\ 1.
   \end{equation}
   Thus, $\me(N_{k,k}^2)/(N_k)_2\to 1$ and, hence,
   \[
   \me(\nu_{k,k,1}\nu_{k,k,2})\ =\ \frac{\me(N_{k,k}^2)}{(N_k)_2}
   - \frac{\me(\nu_{k,k,1}^2)}{N_k-1}
   \ \to\ 1-1\ =\ 0.
   \]
   Thus, (\ref{equivalent2}) holds.

   Conversely, assume that (\ref{equivalent2}) holds. Then $\me(N_{k,k})/N_k=\me(\nu_{k,k,1})\to 1$ for all $k\in E$. Thus, for all $\ell\in E$,
   \[
   \me\bigg(\sum_{k\ne\ell}\frac{N_{k,\ell}}{N_\ell}\bigg)
   \ =\ \me\bigg(1-\frac{N_{\ell,\ell}}{N_\ell}\bigg)
   \ =\ 1-\frac{\me(N_{\ell,\ell})}{N_\ell}\ \to\ 1-1\ =\ 0.
   \]
   In particular, $\me(N_{k,\ell})/N_\ell\to 0$ for all $k,\ell\in E$ with $k\ne\ell$. For the second moment of $N_{k,k}/N_k$ the bounds (\ref{upperbound}) and (\ref{lowerbound}) are still valid. Thus, again $\me(N_{k,k}^2)/(N_k)_2\to 1$ and, hence,
   \[
   \frac{\me(\nu_{k,k,1}^2)}{N_k-1}
   \ =\ \frac{\me(N_{k,k}^2)}{(N_k)_2} - \me(\nu_{k,k,1}\nu_{k,k,2})
   \ \to\ 1-1\ =\ 0.
   \]
   Therefore, (\ref{equivalent1}) holds.

   To verify the last statement let $k,\ell\in E$ with $k\ne\ell$ and assume that $P_N\to I$. Then, as seen before, $\me(N_{k,\ell})/N_\ell\to 0$ and, hence,
   \[
   \frac{(N_k)_2}{(N_\ell)_2}\me(\nu_{k,\ell,1}\nu_{k,\ell,2})
   \ =\ \frac{N_k}{(N_\ell)_2}\me\bigg(\nu_{k,\ell,1}\sum_{i=2}^{N_k}\nu_{k,\ell,i}\bigg)
   \ \le\ \frac{N_k}{N_\ell-1}\me(\nu_{k,\ell,1})
   \ =\ \frac{\me(N_{k,\ell})}{N_\ell-1}\ \to\ 0.
   \]
   Thus, $(N_k/N_\ell)^2\me(\nu_{k,\ell,1}\nu_{k,\ell,2})\to 0$ for all $k,\ell\in E$ with $k\ne\ell$.
\end{proof}
Let us now turn to the proofs of the two convergence results, Theorem  \ref{main1} and Theorem \ref{main2}, respectively. Although the two proofs have much in common, we provide these proofs separately, since the proof of Theorem \ref{main1} is less technical and does not rely on any time-scaling.
\begin{proof}[Proof of Theorem \ref{main1}]
   Let $\pi,\pi'\in{\cal P}_{n,E}$. 
   If $\pi\not\subseteq\pi'$, then $p_{\pi,\pi'}^{(N)}=0=:a_{\pi,\pi'}$. Assume now that $\pi\subseteq\pi'$, i.e., each block of $\pi'$ is a union of some blocks of $\pi$ (types of the blocks disregarded here). Let $i_\ell$ and $j_k$ denote the number of $\ell$-blocks of $\pi$ and $k$-blocks of $\pi'$ respectively and let $i_{k,\ell,s}$, $k,\ell\in E$, $s\in[j_k]$, denote the group sizes of $\ell$-blocks of $\pi$ merging to the $s$-th $k$-block of $\pi'$. Furthermore, let $T=(t_{k,\ell})_{k,\ell\in E}$ denote the tensor with entries $t_{k,\ell}:=(i_{k,\ell,s})_{s\in[j_k]}$, $k,\ell\in E$. Then, $p_{\pi,\pi'}^{(N)}=\Phi_j^{(N)}(T)$. If $T\ne\mathbf{1}_j$, then, by assumption, $p_{\pi,\pi'}^{(N)}=\Phi_j^{(N)}(T)\to\phi_j(T)=:a_{\pi,\pi'}$ as $N_{\rm min}\to\infty$. If $T=\mathbf{1}_j$, then $\pi=\pi'$ and
   \[
   p_{\pi,\pi}^{(N)}
   \ =\ 1-\sum_{\pi'\ne\pi}p_{\pi,\pi'}^{(N)}
   \ \to\ 1-\sum_{\pi'\ne\pi}a_{\pi,\pi'}
   \ =:\ a_{\pi,\pi},\qquad N_{\rm min}\to\infty.
   \]
   Thus, the transition matrix $P_N=(p_{\pi,\pi'}^{(N)})_{\pi,\pi'\in{\cal P}_{n,E}}$ of the ancestral process converges to $A$ as $N_{\rm min}\to\infty$, with the matrix $A$ as defined in the statement of the theorem. The convergence of the finite-dimensional distributions of the ancestral process $({\cal A}_r^{(n,N)})_{r\in\nz_0}$ and the convergence in $D_{{\cal P}_{n,E}}(\nz_0)$ to a Markov chain $\Pi^{(n)}=(\Pi_r^{(n)})_{r\in \nz_0}$ with transition matrix $A$ now follows immediately.
\end{proof}
\begin{proof}[Proof of Theorem \ref{main2}]
   Let $\pi,\pi'\in{\cal P}_{n,E}$. 
   If $\pi\not\subseteq\pi'$, then $p_{\pi,\pi'}^{(N)}=0$ and, hence, $p_{\pi,\pi'}^{(N)}/c_N=0=:q_{\pi,\pi'}$. Assume now that $\pi\subseteq\pi'$, i.e., each block of $\pi'$ is a union of some blocks of $\pi$ (types of the blocks disregarded here). Let $i_\ell$ and $j_k$ denote the number of $\ell$-blocks of $\pi$ and $k$-blocks of $\pi'$ respectively and let $i_{k,\ell,s}$, $k,\ell\in E$, $s\in[j_k]$, denote the group sizes of $\ell$-blocks of $\pi$ merging to the $s$-th $k$-block of $\pi'$. Furthermore, let $T=(t_{k,\ell})_{k,\ell\in E}$ denote the tensor with entries $t_{k,\ell}:=(i_{k,\ell,s})_{s\in[j_k]}$, $k,\ell\in E$. Then, $p_{\pi,\pi'}^{(N)}=\Phi_j^{(N)}(T)$. If $T\ne\mathbf{1}_j$, then, by assumption, $p_{\pi,\pi'}^{(N)}/c_N=\Phi_j^{(N)}(T)/c_N\to \phi_j(T)=:q_{\pi,\pi'}$ as $N_{\rm min}\to\infty$. If $T=\mathbf{1}_j$, then $\pi=\pi'$ and
   \[
   \frac{1-p_{\pi,\pi}^{(N)}}{c_N}
   \ =\ \sum_{\pi'\ne\pi}\frac{p_{\pi,\pi'}^{(N)}}{c_N}
   \ \to\ \sum_{\pi'\ne\pi}q_{\pi,\pi'}
   \ =:\ -q_{\pi,\pi},\qquad N_{\rm min}\to\infty.
   \]
   Thus, the transition matrix $P_N=(p_{\pi,\pi'}^{(N)})_{\pi,\pi'\in{\cal P}_{n,E}}$ of the ancestral process $({\cal A}_r^{(n,N)})_{r\in\nz_0}$ satisfies $P_N=I+c_NQ+o(c_N)$ as $N_{\rm min}\to\infty$, with the matrix $Q$ as defined in the statement of the theorem. The convergence of the finite-dimensional distributions of the time-scaled ancestral process $({\cal A}_{\lfloor t/c_N\lfloor}^{(n,N)})_{t\ge 0}$ as $N_{\rm min}\to\infty$ and the convergence in $D_{{\cal P}_{n,E}}([0,\infty))$ to a Markov process $\Pi^{(n)}=(\Pi_t^{(n)})_{t\ge 0}$ with generator $Q$ now follows as in the proof of Theorem 1 of \cite{Moehle2024} by applying Ethier and Kurtz \cite[p.~168, Theorem 2.6]{EthierKurtz1986}.
\end{proof}
We now turn to the proofs concerning integral representations.
\begin{proof}[Proof of Proposition \ref{int1prop}]
   We proceed as in the proof of \cite[Lemma 3.1]{MoehleSagitov2001}. Fix $j=(j_k)_{k\in E}\in\nz_0^E$. The particular diagonal tensor $(t_{k,\ell})_{k,\ell\in E}\in{\cal T}_j$ with diagonal entries $t_{k,k}:=(2,\ldots,2)\in\rz^{j_k}$ for all $k\in E$ is denoted by $\mathbf{2}_j$. If $\phi_j(\mathbf{2}_j)=0$, then the statement holds with $\Lambda_j$ being the zero measure on $\Delta_j$. Assume now that $\phi_j(\textbf{2}_j)>0$. Then, $\me\big(\prod_{k\in E}\prod_{s=1}^{j_k}(\nu_{k,k,s})_2\big)>0$ for all sufficiently large values of $N_{\rm min}:=\min_{k\in E}N_k$. Let $Y_{k,s}$, $k\in E$, $s\in[j_k]$, be random variables with distribution
   \[
   \pr\bigg(\bigcap_{k\in E}\bigcap_{s=1}^{j_k}\{Y_{k,s}=i_{k,s}\}\bigg)
   \ :=\
   \frac{\prod_{k\in E}\prod_{s=1}^{j_k}(i_{k,s})_2}
   {\me\big(\prod_{k\in E}\prod_{s=1}^{j_k}(\nu_{k,k,s})_2\big)}
   \pr\bigg(\bigcap_{k\in E}\bigcap_{s=1}^{j_k}\{\nu_{k,k,s}=i_{k,s}\}\bigg).
   \]
   Note that each $Y_{k,s}$ in general depends on $j=(j_k)_{k\in E}$, but this dependence is suppressed for convenience in our notation. The random variables $Y_{k,s}$ are a size biased modification of the offspring variables $\nu_{k,k,s}$. Note that $Y_{k,k,s}$ takes values in $\{2,\ldots,N_k\}$ almost surely. For all $i_{k,s}\in\nz_0$, $k\in E$, $s\in[j_k]$, we have
   \[
   \me\bigg(
      \prod_{k\in E}\prod_{s=1}^{j_k}Y_{k,s}^{i_{k,s}}
   \bigg)
   \ =\
   \frac{\me\big(
           \prod_{k\in E}\prod_{s=1}^{j_k}
           (\nu_{k,k,s}^{i_{k,s}+2}-\nu_{k,k,s}^{i_{k,s}+1})
        \big)}
   {\me\big(\prod_{k\in E}\prod_{s=1}^{j_k}(\nu_{k,k,s})_2\big)}.
   \]
   Using that $t^i=\sum_{m=0}^i (t)_mS(i,m)$ for all $t\in\rz$ and $i\in\nz_0$, where the $S(.,.)$ denote the Stirling numbers of the second kind, it follows from the assumptions of Theorem \ref{main2} that
   \begin{equation} \label{mom}
      \me\bigg(\prod_{k\in E}\prod_{s=1}^{j_k}
      \bigg(\frac{Y_{k,s}}{N_k}\bigg)^{i_{k,s}}\bigg)
      \ \to\ \frac{\phi_j(T+\mathbf{2}_j)}{\phi_j(\mathbf{2}_j)}
   \end{equation}
   as $N_{\rm min}\to\infty$, where $T=(t_{k,\ell})_{k,\ell\in E}$ denotes the diagonal tensor with diagonal entries $t_{k,k}:=(i_{k,s})_{s\in[j_k]}$, $k\in E$. Note that $(Y_{k,s}/N_k)_{k\in E,s\in[j_k]}$ is concentrated on $\Delta_j$. Since $\Delta_j$ is compact, the convergence of all the moments (\ref{mom}) implies the existence of a probability measure $P_j$ on $\Delta_j$ such that $(Y_{k,s}/N_k)_{k\in E,s\in[j_k]}$ converges in distribution to $P_j$ as $N_{\rm min}\to\infty$. Thus, (\ref{int1}) holds with $Q_j:=\phi_j(\mathbf{2}_j)P_j$. The measure $Q_j$ is uniquely determined, since the limiting moments (\ref{mom}) fully determine $P_j$. For all $j,j'\in\nz_0^E$ with $j\le j'$ it follows from the monotonicity property (\ref{phimon}) that $Q_j(\Delta_j)=\phi_j(\mathbf{2}_j)\ge\phi_{j'}(\mathbf{2}_{j'})=Q_{j'}(\Delta_{j'})$.
\end{proof}

\subsection{Proof of Theorem \ref{main3}} \label{proofmain3}
   We extend the proof of the `only if' part of Schweinsberg \cite[Theorem 2]{Schweinsberg2000} to the multi-type setting. Let $\Pi=(\Pi_t)_{t\ge 0}$ denote the multi-type exchangeable coalescent with rates $\phi_j(T)$, $j=(j_k)_{k\in E}\in\nz_0^E$, $T\in{\cal T}_j$. We furthermore denote with $\Pi_t^{(n)}:=\varrho_n\circ\Pi_t$ the restriction of $\Pi_t$ to $[n]$ and by $\Pi^{(n)}:=(\Pi_t^{(n)})_{t\ge 0}$ the corresponding $n$-coalescent. Define the stopping time
   \[
   S\ :=\ \inf\{t>0:\mbox{$1$ and $2$ are in the same block of $\Pi_t$}\}
   \]
   and the events
   \[
   E_n\ :=\ \{1,\ldots,n\mbox{ are in distinct blocks of $\Pi_{S-}$}\},
   \quad n\in\nz.
   \]
   As in \cite{Schweinsberg2000} it follows that $\pr(E_n)>0$. For $n\in\nz$ let $\Theta_n$ be a random labeled partition of $[n]$ whose distribution is the conditional distribution of $\Pi_S$ given $E_n$. We claim that there exists a random labeled partition $\Theta$ of $\nz$ such that $\varrho_n\circ\Theta$ has the same distribution as $\Theta_n$ for all $n\in\nz$. To prove this, let us first verify that $\varrho_{n,m}\circ\Theta_n$ has the same distribution as $\Theta_m$ for all $m,n\in\nz$ with $m<n$. Fix $m<n$. Let $\theta\in{\cal P}_{m,E}$ such that $1$ and $2$ are in the same block of $\theta$. For $k\in E$ let $j_k$ denote the number of $k$-blocks of $\theta$ and $i_{k,s}\in\nz$, $s\in[j_k]$, the corresponding block sizes. Define $\phi_1(\mathbf{2}):=\sum_{k\in E}\phi_{e_k}(\mathbf{2}_k)$, where $\mathbf{2}_k\in{\cal T}_{e_k}$ is the tensor with $i_{k,1}=2$. Note that $\phi_1(\mathbf{2})<\infty$. Furthermore, let $T$ denote the diagonal tensor with diagonal entries $t_{k,k}:=(i_{k,s})_{s\in[j_k]}$, $k\in E$. If $\Pi_t^{(m)}$ has $m$ singleton blocks, then these blocks merge to the blocks of $\theta$ at the rate $\phi_j(T)$. The total rate of all coalescence events involving $1$ and $2$ is $\phi_1(\mathbf{2})$. Thus,
   \begin{equation} \label{thetadist}
      \pr(\Theta_m=\theta)\ =\ \frac{\phi_j(T)}{\phi_1(\mathbf{2})}.
   \end{equation}
   In the same way, if $\Pi^{(n)}_t$ consists of $n$ singletons, then the total rate of all coalescence events of $[n]$, such that their restriction to $[m]$ merges to the blocks of $\theta$, is also $\phi_j(T)$. Therefore, $\pr(\varrho_{n,m}\circ\Theta_n=\theta)=\phi_j(T)/\phi_1(\mathbf{2})$. Thus, $\varrho_{n,m}\circ\Theta_n$ has the same distribution as $\Theta_m$. A standard application of the Daniell--Kolmogorov theorem yields the existence of $\Theta$ as the projective limit of the sequence $(\Theta_n)_{n\in\nz}$.

   Now, denote by $\Theta'$ the restriction of $\Theta$ to $\{3,4,\ldots\}$. Since $\Pi$ is exchangeable, so is $\Theta'$. Since exchangeability in the single-type and multi-type case coincide, copying the proof of Aldous \cite[Section 11, p.~84 ff.]{Aldous1985}, we obtain the existence of the limiting frequencies of the blocks of $\Theta'$, equipped with a (random) label, the label of the respective block. Write $(P_1,K_1),(P_2,K_2),\ldots$ for the pairs of limiting frequencies and their labels, where the $P_i$ are ordered decreasingly and $K_i$ is the label of $P_i$, $i\in\nz$.
   We have $P_n=0$ if $\Theta'$ has fewer than $n$ blocks with non-zero limiting frequencies. Let $P_0:=1-\sum_{j=1}^\infty P_j$. Note that the blocks of $\Theta$ also have limiting relative frequencies that coincide with those of $\Theta'$. Write $B_1,B_2,\ldots$ for the blocks of $\Theta$, such that block $B_i$ has limiting frequency $P_i$ on $\{P_i>0\}$ and blocks with the same limiting relative frequencies are ordered at random, independently of $\Theta$. The block $B_i$ is undefined on $\{P_i=0\}$.

   Let $(x_\ell)_{\ell\in E}$ be some sequence of distinct real numbers in $[0,1)$. Define a sequence of random variables $Z_1,Z_2,\ldots$ via $Z_m:=i+x_\ell$ on the event $\{m\in B_i,K_i=\ell\}$ and $Z_m:=0$ on the complement of $\bigcup_{i\in\nz}\bigcup_{\ell\in E}\{m\in B_i,K_i=\ell\}$, i.e., if the block containing $m$ has limiting frequency $0$. Furthermore, let ${\cal G}$ denote the $\sigma$-algebra generated by $(P,K)$, where
   $P:=(P_i)_{i\in\nz}$ and $K:=(K_i)_{i\in\nz}$, and let $(x,x):=\sum_{i=1}^\infty x_i^2$ for $x=(x_i)_{i\in\nz}\in\Delta$.
\begin{lemma} \label{local}
  For all $m\in\{3,4,\ldots\}$, $i\in\nz$ and $\ell\in E$ we have $\pr(Z_m=i+x_\ell\,|\,{\cal G})=1_{\{K_i=\ell\}}P_i$ almost surely and $\pr(Z_m=0\,|\,{\cal G})=P_0$ almost surely. Furthermore, $\pr(Z_1=i+x_\ell\,|\,{\cal G})=1_{\{K_i=\ell\}}P_i^2/(P,P)$ almost surely on $\{P_1>0\}$. Moreover, $Z_1,Z_3,Z_4,\ldots$ are conditionally independent given ${\cal G}$.
\end{lemma}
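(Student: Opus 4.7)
The plan is to reduce everything to a labeled version of Kingman's paintbox representation for exchangeable partitions, building on the Aldous-style argument already invoked above to produce the pairs $(P_i,K_i)_{i\in\nz}$. The blocks of $\Theta$ (viewed as a labeled partition of $\nz$) have the same set of asymptotic frequencies and labels as those of $\Theta'$, so they are completely described by $(P,K)$ up to the choice of which specific integers lie in which block. The key is to show that, conditional on ${\cal G}$, the mechanism placing integers into blocks is an independent paintbox for $\{3,4,\ldots\}$ together with a single coagulated placement forcing $1$ and $2$ into a common block.

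For $m\ge 3$ I would work directly with $\Theta'$, which is an exchangeable labeled partition of $\{3,4,\ldots\}$. By the labeled paintbox representation (Aldous \cite[Section 11]{Aldous1985}, with labels attached as in the paragraph preceding the lemma), conditional on ${\cal G}$ each $m\in\{3,4,\ldots\}$ is independently placed into $B_i$ with probability $P_i$, inheriting the label $K_i$, or into the dust with probability $P_0:=1-\sum_{i\in\nz}P_i$. This immediately yields $\pr(Z_m=i+x_\ell\,|\,{\cal G})=1_{\{K_i=\ell\}}P_i$ and $\pr(Z_m=0\,|\,{\cal G})=P_0$, and moreover the conditional independence of $Z_3,Z_4,\ldots$ given ${\cal G}$.

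For $Z_1$, by the definition of $S$ the integers $1$ and $2$ lie in the same block of $\Theta$, so $Z_1$ records this common block. I would obtain its conditional law by the following size-biasing argument: in an unconditioned paintbox with frequencies $(P_i)$, an independent pair of samples would both land in $B_i$ with ${\cal G}$-conditional probability $P_i^2$; conditioning on the event $F:=\{1\mbox{ and }2\mbox{ lie in the same non-dust block}\}$, which has ${\cal G}$-conditional probability $\sum_{i\in\nz}P_i^2=(P,P)>0$ on $\{P_1>0\}$, produces the size-biased weight $P_i^2/(P,P)$. Incorporating the ${\cal G}$-measurable label $K_i$ of $B_i$ gives the claimed identity for $Z_1$. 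The joint conditional independence of $Z_1,Z_3,Z_4,\ldots$ given ${\cal G}$ then follows, since in the paintbox all placements are conditionally independent given ${\cal G}$ and the extra conditioning through $F$ constrains only the placements of $1$ and $2$, leaving the placements of $3,4,\ldots$ untouched.

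The main obstacle is to make rigorous the labeled paintbox representation of $\Theta$ as a partition of $\nz$, including the constraint that the specific indices $1$ and $2$ occupy the same block, and to verify that conditioning on this constraint produces precisely the size-biased law $P_i^2/(P,P)$. Concretely, one needs the conditional law of $\Theta$ given ${\cal G}$ to factor as an independent paintbox placement for $\{3,4,\ldots\}$ combined with a single coagulated placement for $\{1,2\}$. This factorization is intuitively clear from the exchangeability of $\Theta$ together with the definition of $S$ (which entails $1$ and $2$ merge at time $S$ almost surely, so the constraint $F$ is built in), but its formal justification, matching the level of rigor of Schweinsberg \cite{Schweinsberg2000} in the single-type case, is the technical core of the argument; once established, the four conclusions of the lemma are read off directly.
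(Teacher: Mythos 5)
Your handling of the easy parts is fine: for $m\ge 3$ the marginal laws $\pr(Z_m=i+x_\ell\,|\,{\cal G})=1_{\{K_i=\ell\}}P_i$, the dust probability $P_0$, and the conditional independence of $Z_3,Z_4,\ldots$ do follow from the (labeled) paintbox structure of the exchangeable partition $\Theta'$, which is essentially how the paper argues (it uses the empirical-frequency identity of Schweinsberg's Lemma 38 rather than invoking the paintbox wholesale). But the heart of the lemma -- the conditional law of $Z_1$ and its conditional independence from $Z_3,Z_4,\ldots$ -- is not proved in your proposal. You reduce it to the assertion that, given ${\cal G}$, the law of $\Theta$ factors as an independent paintbox placement of $\{3,4,\ldots\}$ together with a coagulated placement of $\{1,2\}$ obtained by conditioning two independent paintbox samples to land in the same non-dust block, and you call this "intuitively clear from the exchangeability of $\Theta$". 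That is precisely the statement to be proved, and it does not follow from exchangeability of $\Theta$: $\Theta$ is \emph{not} exchangeable (only its restriction $\Theta'$ to $\{3,4,\ldots\}$ is), and it is not constructed as a paintbox conditioned on the event that $1$ and $2$ share a block. It is the projective limit of the conditional laws of $\Pi_S$ given $E_n$, where $S$ is the first merger time of $1$ and $2$; the indices $1,2$ are singled out by the stopping time, so the claim that their common block is a $P_i^2/(P,P)$ size-biased pick is a statement about the coalescent dynamics at the merger time, not about conditioning an iid paintbox pair to coincide. A priori, $\pr(Z_1=i+x_\ell\,|\,{\cal G})$ could be any ${\cal G}$-measurable weight; nothing in your argument pins it down.

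The paper supplies exactly this missing ingredient by a dynamic exchangeability argument: for each $k\ge 4$ it compares the merger of $\{1,2\}$ at time $S$ with the merger of $\{k-1,k\}$ at the stopping time $S_k$, using exchangeability of the process $\Pi$ (not of $\Theta$) to get $\pr(\Theta_k\in M_{1,\ell}\,|\,{\cal G})=\pr(\Theta_k\in M_{2,\ell}\,|\,{\cal G})$ after restricting, conditioning on $E_n$, and a martingale passage from ${\cal G}_{k+1,n}$ to ${\cal G}$. Expanding both sides with the already-established marginals and conditional independence of $Z_3,Z_4,\ldots$ yields the family of identities $\sum_{i\ge 1}P_i^{k-4}1_{\{K_i=\ell\}}\big(P_i^2-Q_{i,\ell}(P,P)\big)=0$ for all $k$, where $Q_{i,\ell}:=\pr(Z_1=i+x_\ell\,|\,{\cal G})$, and Schweinsberg's Lemma 20 (plus the normalization $\sum_{j,\ell}Q_{j,\ell}=1$ on $\{P_1>0\}$) then forces $Q_{i,\ell}=1_{\{K_i=\ell\}}P_i^2/(P,P)$. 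The conditional independence of $Z_1$ from $Z_3,Z_4,\ldots$ is likewise obtained from invariance of $(Z_1,Z_3,Z_4,\ldots)$ under finite permutations of $\{3,4,\ldots\}$ together with Schweinsberg's Lemma 39, not from the unproven factorization. Without an argument of this type (or a genuine substitute), your proof has a gap exactly at the lemma's main claim.
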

\begin{proof}
   Since $\Theta'$ is exchangeable, also $Z_3,Z_4,\ldots$ are exchangeable. The $\sigma$-field generated by its limiting empirical distribution is ${\cal G}$. Therefore, by \cite[Lemma 38]{Schweinsberg2000}, for all $m\in\{3,4,\ldots\}$, $i\in\nz$ and $\ell\in E$,
   \begin{eqnarray*}
      \pr(Z_m=i+x_\ell\,|\,{\cal G})
      & = & \lim_{n\to\infty}\frac{1}{n}\sum_{m=1}^n 1_{\{Z_m=i+x_\ell\}}
      \ = \ \lim_{n\to\infty}
            \frac{1}{n}\sum_{m=1}^n 1_{\{m\in B_i,K_i=\ell\}}\\
      & = & 1_{\{K_i=\ell\}}\lim_{n\to\infty}\frac{1}{n}\sum_{m=1}^n
            1_{\{m\in B_i\}}
      \ = \ 1_{\{K_i=\ell\}}P_i\quad\mbox{almost surely.}
   \end{eqnarray*}
%
%
   Moreover,
   \begin{eqnarray*}
      \pr(Z_m=0\,|\,{\cal G})
      & = & 1-\sum_{i=1}^\infty\sum_{\ell\in E}\pr(Z_m=i+x_\ell\,|\,{\cal G})\\
      & = & 1-\sum_{i=1}^\infty\sum_{\ell\in E}P_i1_{\{K_i=\ell\}}
   \ =\ 1-\sum_{i=1}^\infty P_i\ =\ P_0\quad\mbox{almost surely.}
   \end{eqnarray*}
   Since $\Pi$ is exchangeable, the sequences $(Z_1,Z_3,Z_4,\ldots)$ and $(Z_1,Z_{\sigma(3)},Z_{\sigma(4)},\ldots)$ have the same distribution for all finite permutations $\sigma$ of $\{3,4,\ldots\}$. Be Lemma 39 in Appendix A of \cite{Schweinsberg2000}, $Z_1,Z_3,Z_4,\ldots$ are conditionally independent given ${\cal G}$. It remains to show that $\pr(Z_1=i+x_\ell\,|\,{\cal G})=1_{\{K_i=\ell\}}P_i^2/(P,P)$ almost surely on $\{P_1>0\}$. Fix $n,k\in\nz$ with $4\le k<n$ and define the stopping time
   \[
   S_k\ :=\ \inf\{t>0:\mbox{$k-1$ and $k$ are in the same block of $\Pi_t$}\}
   \]
   and the event
   \[
   E_{n,k}\ :=\ \{\mbox{$1,\ldots,n$ are in distinct blocks of $\Pi_{{S_k}-}$}\}.
   \]
   Furthermore, for all $\ell,m\in E$ let
   \begin{eqnarray*}
      \theta_{k,1,m,\ell} & := & \{(\{1,2\},m),(\{3,\ldots,k\},\ell)\}\ \in\ {\cal P}_{k,E},\\
      \theta_{k,2,\ell,m} & := & \{(\{1,\ldots,k-2\},\ell),(\{k-1,k\},m)\}\ \in\ {\cal P}_{k,E},\\
      M_{1,\ell} & := & \{\theta_{k,1,m,\ell}:m\in E\},\\
      M_{2,\ell} & := & \{\theta_{k,2,\ell,m}:m\in E\}.
   \end{eqnarray*}
   Now, let $\Pi^{(k+1,n)}$ denote the restriction of $\Pi$ to $\{k+1,\ldots,n\}$ and let $\pi\in{\cal P}_{\{k+1,\ldots,n\},E}$. The exchangeability of $\Pi$ implies
   \[
   \pr(\Pi_S^{(k)}=\theta_{k,1,m,\ell},\Pi_S^{(k+1,n)}=\pi)
   \ =\ \pr(\Pi_{S_k}^{(k)}=\theta_{k,2,\ell,m},\Pi_{S_k}^{(k+1,n)}=\pi)
   \]
   and therefore, by summation over all $m\in E$,
   \[
   \pr(\Pi_S^{(k)}\in M_{1,\ell},\Pi_S^{(k+1,n)}=\pi)
   \ =\ \pr(\Pi_{S_k}^{(k)}\in M_{2,\ell},\Pi_{S_k}^{(k+1,n)}=\pi).
   \]
   Since $S=S_k$ on $\{\Pi_{S_k}^{(k)}\in M_{2,\ell}\}\cap E_{n,k}$ and
   $\{\Pi_S^{(k)}\in M_{2,\ell}\}\cap E_n$, we obtain
   \[
   \pr(\{\Pi_S^{(k)}\in M_{1,\ell}\}\cap E_n\cap\{\Pi_S^{(k+1,n)}=\pi\})
   \ =\ \pr(\{\Pi_S^{(k)}\in M_{2,\ell}\}\cap E_n\cap \{\Pi_{S_k}^{(k+1,n)}=\pi\}).
   \]
   Therefore,
   \[
   \pr(\Pi_S^{(k)}\in M_{1,\ell},\Pi_S^{(k+1,n)}=\pi\,|\,E_n)
   \ =\ \pr(\Pi_S^{(k)}\in M_{2,\ell},\Pi_{S_k}^{(k+1,n)}=\pi\,|\,E_n).
   \]
   Let $\Theta_{k+1,n}$ denote the restriction of $\Theta$ to $\{k+1,\ldots,n\}$ and ${\cal G}_{k+1,n}$ its generated $\sigma$-field.
   Since $\Theta_n$ is by definition the conditional distribution of $\Pi_S$ given $E_n$, it follows that
   \[
   \pr(\Theta_k\in M_{1,\ell},\Theta_{k+1,n}=\pi)\ =\ \pr(\Theta_k\in M_{2,\ell},\Theta_{k+1,n}=\pi).
   \]
   Because this holds for arbitrary $\pi\in{\cal P}_{\{k+1,\ldots,n\},E}$ we obtain
   \[
   \pr(\Theta_k\in M_{1,\ell}\,|\,{\cal G}_{k+1,n})
   \ =\ \pr(\Theta_k\in M_{2,\ell}\,|\,{\cal G}_{k+1,n})
   \quad\mbox{almost surely.}
   \]
   Now, let ${\cal G}_{k+1}$ denote the $\sigma$-field generated by the restriction of $\Theta'$ to $\{k+1,k+2,\ldots\}$. Because $\bigcup_{n\in\nz}{\cal G}_{k+1,n}={\cal G}_{k+1}$, a standard martingale argument yields
   \[
   \pr(\Theta_k\in M_{1,\ell}\,|\,{\cal G}_{k+1})
   \ =\ \pr(\Theta_k\in M_{2,\ell}\,|\,{\cal G}_{k+1})\quad\mbox{almost surely.}
   \]
   The limiting relative frequencies and (random) colors of blocks can be recovered from the restriction of $\Theta'$ to $\{k+1,k+2,\ldots\}$, so $(P_i,K_i)_{i\in\nz}$ is ${\cal G}_{k+1}$-measurable. Therefore, ${\cal G}\subseteq{\cal G}_{k+1}$ and by the tower property of conditional expectations
   \begin{equation} \label{help}
      \pr(\Theta_k\in M_{1,\ell}\,|\,{\cal G})
      \ =\ \pr(\Theta_k\in M_{2,\ell}\,|\,{\cal G})\quad\mbox{almost surely.}
   \end{equation}
   In the following it is shown that (\ref{help}) leads to the statement of the lemma: By the definition of $Z_1,Z_2,\ldots$ it follows by conditional independence that
   \begin{eqnarray*}
      \pr(\Theta_k\in M_{1,\ell}\,|\,{\cal G})
      & = & \sum_{i=1}^\infty
            \pr(Z_3=\cdots=Z_k=i+x_\ell,Z_1=Z_2\ne i+x_\ell\,|\,{\cal G})\\
      & = & \sum_{i=1}^\infty \pr(Z_3=\cdots=Z_k=i+x_\ell\,|\,{\cal G})
             \pr(Z_1\ne i+x_\ell\,|\,{\cal G})\\
      & = & \sum_{i=1}^\infty P_i^{k-2}1_{\{K_i=\ell\}}(1-Q_{i,\ell})
   \end{eqnarray*}
   almost surely, where $Q_{i,\ell}:=\pr(Z_1=i+x_\ell\,|\,{\cal G})$ for all $i\in\nz$ and $\ell\in E$. Similarly,
   \begin{eqnarray*}
      &   & \hspace{-10mm}\pr(\Theta_k\in M_{2,\ell}\,|\,{\cal G})\\
      & = & \sum_{i\ne j}\sum_{m\in E}
            \pr(Z_1=Z_2=i+x_\ell,Z_3=\cdots=Z_{k-2}=i+x_\ell,Z_{k-1}=Z_k=j+x_m|{\cal G})\\
      & = & \sum_{i\ne j}\sum_{m\in E}
            \pr(Z_1=i+x_\ell\,|\,{\cal G})
            \pr(Z_3=\cdots=Z_{k-2}=i+x_\ell\,|\,{\cal G})
            \pr(Z_{k-1}=Z_k=j+x_m\,|\,{\cal G})\\
      & = & \sum_{i\ne j}\sum_{m\in E}
            Q_{i,\ell}P_i^{k-4}1_{\{K_i=\ell\}}
            P_j^21_{\{K_j=m\}}\\
      & = & \sum_{i=1}^\infty Q_{i,\ell}P_i^{k-4}1_{\{K_i=\ell\}}
            \sum_{j\ne i}P_j^2\sum_{m\in E}1_{\{K_j=m\}}\\
      & = & \sum_{i=1}^\infty Q_{i,\ell}P_i^{k-4}1_{\{K_i=\ell\}}
            \sum_{j\ne i}P_j^2\quad\mbox{almost surely}.
   \end{eqnarray*}
   It follows from (\ref{help}) that
   \begin{eqnarray*}
      0
      & = & \pr(\Theta_k\in M_{1,\ell}\,|\,{\cal G})
            - \pr(\Theta_k\in M_{2,\ell}\,|\,{\cal G})\\
      & = & \sum_{i=1}^\infty \bigg(
               P_i^{k-2}1_{\{K_i=\ell\}}(1-Q_{i,\ell})
               -Q_{i,\ell}P_i^{k-4}1_{\{K_i=\ell\}\sum_{j\ne i}P_j^2}
            \bigg)\\
      & = & \sum_{i=1}^\infty P_i^{k-4}1_{\{K_i=\ell\}}
            \bigg(
               P_i^2-P_i^2Q_{i,\ell}-Q_{i,\ell}\sum_{j\ne i}P_j^2
            \bigg)\\
      & = & \sum_{i=1}^\infty P_i^{k-4}1_{\{K_i=\ell\}}(P_i^2-Q_{i,\ell}(P,P))
      \quad\mbox{almost surely.}
   \end{eqnarray*}
   From \cite[Lemma 20]{Schweinsberg2000} we obtain $Q_{i,\ell}=P_i^2/(P,P)$ on
   $\{P_i>0,K_i=\ell\}$. Thus,
   \begin{equation} \label{nearly}
      Q_{i,\ell}\ =\ 1_{\{K_i=\ell\}}\frac{P_i^2}{(P,P)}
   \end{equation}
   on $\{P_i>0\}$. On $\{P_1>0\}$ we have
   $\sum_{j\in\nz}\sum_{\ell\in E}Q_{j,\ell}=1$ almost surely by (\ref{nearly}). Thus, $Q_{i,\ell}=0$ almost surely on $\{P_1>0,P_i=0\}$. It follows from the paragraph after (39) of \cite{Schweinsberg2000} that (\ref{nearly}) holds also on $\{P_1>0\}$ for all $i\in\nz$.
\end{proof}
   Thanks to Lemma \ref{local}, the proof of Theorem \ref{main3} is now completed as follows. Let $j=(j_k)_{k\in E}\in\nz_0^E$ and $T$ be a diagonal tensor with diagonal entries $t_{k,k}:=(i_{k,s})_{s\in[j_k]}\in\{2,3,\ldots\}^{j_k}$. Define $|T|:=\sum_{k\in E}\sum_{s=1}^{j_k}i_{k,s}$. Let $\theta\in{\cal P}_{|T|,E}$ with $j_k$ $k$-blocks $B_{k,1},\ldots,B_{k,j_k}$ of sizes $i_{k,1},\ldots,i_{k,j_k}$ respectively, and such that $1$ and $2$ are in the same block of $\theta$. It is already shown in (\ref{thetadist}) that
   \[
   \phi_j(T)\ =\ \phi_1(\mathbf{2})\pr(\Theta_{|T|}=\theta).
   \]
   Lemma 40 of \cite{Schweinsberg2000} carries over to the multi-type setting and implies that almost surely every block of $\Theta'$ having limiting relative frequency zero is a singleton. So if $i,j\ge 3$, $i$ and $j$ are in the same block if and only if $Z_i=Z_j\ne 0$ almost surely. On $\{P_1>0\}$, Lemma \ref{local} implies $Z_1=Z_2>0$ almost surely, so $i,j\in\nz$ are in the same block if and only if $Z_i=Z_j\ne 0$. This shows in particular that, on $\{P_1>0\}$, $K_i$ is the label of $P_i$ also of $\Theta$. Therefore, on $\{P_1>0\}$, the event $\{\Theta_{|T|}=\theta\}$ coincides, up to a null set, with the event that there exist pairwise distinct $m_{k,s}$, $k\in E$, $s\in[j_k]$, satisfying
   \[
   Z_m\ =\ m_{k,s}+x_k\quad\mbox{for all $k\in E$, $s\in[j_k]$ and $m\in B_{k,s}$}.
   \]
   Let $\ell_0\in E$ with $1\in D_{1,\ell_0}$. Then, by Lemma \ref{local}, for all $k\ne\ell_0$ and all $s\in[j_k]$,
   \[
   \pr(Z_m=m_{k,s}+x_k\mbox{ for all }m\in B_{k,s}\,|\,{\cal G})
   \ =\ P_{m_{k,s}}^{i_{k,s}}1_{\{K_{m_{k,s}}=k\}}
   \]
   and, furthermore, for all $s\in\{2,\ldots,j_{\ell_0}\}$,
   \[
   \pr(Z_m=m_{\ell_0,s}+x_{\ell_0}\mbox{ for all }m\in B_{\ell_0,s}\,|\,{\cal G})
   \ =\ P_{m_{\ell_0},s}^{i_{\ell_0,s}}1_{\{K_{m_{\ell_0,s}}=k\}}
   \]
   and
   \begin{eqnarray*}
      &   & \hspace{-15mm}
            \pr(Z_m=m_{\ell_0,1}+x_{\ell_0}\mbox{ for all }m\in B_{\ell_0,1}
            \,|\,{\cal G})\\
      & = & P_{m_{\ell_0},1}^{i_{\ell_0,1}}
            1_{\{K_{m_{\ell_0,s}}=\ell_0\}}^{i_{\ell_0,1}-2}
            \pr(Z_1=m_{\ell_0,1}+x_{\ell_0}\,|\,{\cal G})\\
      & = & P_{m_{\ell_0,1}}^{i_{\ell_0,1}}
            1_{\{K_{m_{\ell_0,s}}=\ell_0\}}^{i_{\ell_0,1}-1}/(P,P)
      \ = \ P_{m_{\ell_0,1}}^{i_{\ell_0,1}}
            1_{\{K_{m_{\ell_0,s}}=\ell_0\}}/(P,P)
   \end{eqnarray*}
   almost surely on $\{P_1>0\}$, where the last equality holds since $i_{\ell_0,1}\ge 2$. Summation over all possible values of $m_{k,s}$, $k\in E$, $s\in[j_k]$, yields
   \[
   \pr(\Theta_{|T|}=\theta|{\cal G})
   \ =\ \frac{1}{(P,P)}\sum_{m_{k,s}}\prod_{k\in E}\prod_{s=1}^{j_k}
   P_{m_{k,s}}^{i_{k,s}}1_{\{k\}}(K_{m_{k,s}})
   \]
   almost surely on $\{P_1>0\}$. On $\{P_1=0\}$, Lemma \ref{local} implies that $\pr(Z_m=0\,|\,{\cal G})=P_0=1$ almost surely and, therefore, $Z_m=0$ almost surely for all $m\in\{3,4,\ldots\}$, so $\Theta'$ has only singletons almost surely. The exchangeability of $\Pi$ implies that, conditional on the event that $\Theta'$ has only singletons, the probability that $1,2$ and $q$ are in the same block of $\Theta$ is the same for all $q\in\{3,4,\ldots\}$ and must hence be zero. It follows that, on $\{P_1=0\}$, $\Theta_{|T|}$ takes values in the set of partitions of the form $\pi_{k_1,\ldots,k_{|T|-1}}:=((\{1,2\},\ell_1),(\{3\},\ell_2)\ldots,(\{|T|\},\ell_{|T|-1}))$ for some $\ell_1,\ldots,\ell_{|T|-1}\in E$ almost surely. In other words, $\Theta_{|T|}=((\{1,2\},L_1),(\{3\},L_2),\ldots,(\{|T|\},L_{|T|-1}))$ almost surely on $\{P_1=0\}$ for some $E$-valued random variables $L_1,\ldots,L_{|T|-1}$. Since all the blocks of $\theta\in{\cal P}_{|T|,E}$ have size at least $2$, it follows that
   $\pr(\Theta_{|T|}=\theta|{\cal G})=1_{\{L_1=k\}}$ almost surely on $\{P_1=0\}$ if $j=e_k$ and $i_{k,1}=2$ for some $k\in E$ and $\pr(\Theta_{|T|}=\theta|{\cal G})=0$ almost surely on $\{P_1=0\}$ otherwise. Combining this, we obtain
   \begin{eqnarray} \label{localdist}
      \pr(\Theta_{|T|}=\theta|{\cal G})
      & = & 1_{\{P_1=0\}}\sum_{k\in E}
            1_{\{L_1=k\}}
            1_{\{j=\mathbf{e}_k,i_{k,1}=2\}}\nonumber\\
      &   & \hspace{1cm} + 1_{\{P_1>0\}}
              \frac{1}{(P,P)}\sum_{m_{k,s}}\prod_{k\in E}\prod_{s=1}^{j_k}
              P_{m_{k,s}}^{i_{k,s}}1_{\{k\}}(K_{m_{k,s}})
   \end{eqnarray}
   almost surely. Analogous to Aldous \cite{Aldous1985} it follows that $(P,K)$ takes values in $\Delta\times E^\nz$ almost surely. Let $Q$ denote the distribution of $(P,K)$ restricted to $(\Delta\setminus\{0\})\times E^\nz$. Furthermore, define
   $\alpha_k:=\pr(P_1=0,L_1=k)$ for all $k\in E$.
   Taking the expectation in (\ref{localdist}) yields
   \[
   \pr(\Theta_{|T|}=\theta)
   \ =\
   \sum_{k\in E}\alpha_k1_{\{j=\mathbf{e}_k,i_{k,1}=2\}}
            + \int_{(\Delta\setminus\{\mathbf{0}\})\times E^\nz}
            \sum_{m_{k,s}}\prod_{k\in E}\prod_{s=1}^{j_k}
              x_{m_{k,s}}^{i_{k,s}}1_{\{k\}}(y_{m_{k,s}})
              \,\frac{Q({\rm d}(x,y))}{(x,x)}.
   \]
   Since $\phi_j(T)=c\pr(\Theta_{|T|}=\theta)$ with $c:=\phi_j(\mathbf{2})$, the statement of Theorem \ref{main3} follows with $\Xi:=cQ$ and  $a_k:=c\alpha_k$ for all $k\in E$.


\vfill\eject

\subsection{Appendix} \label{appendix}
This appendix collects some basic definitions, illustrations and results used in the article.
\subsubsection{Some formulas concerning the number of offspring}
We provide some straightforward formulas and bounds for certain (joint) moments of numbers of offspring.
\begin{lemma}
   Under (A),
   \begin{equation} \label{mean1}
      \me(\nu_{k,\ell,i})\ =\
      \frac{\me(N_{k,\ell})}{N_k},
      \qquad k,\ell\in E,i\in[N_k],
   \end{equation}
   and
   \begin{equation} \label{mean2}
      \me(\nu_{k,\ell,1}\nu_{k,\ell,2})
      \ =\ \frac{{\rm Var}(N_{k,\ell})}{(N_k)_2}
      + \bigg(\frac{\me(N_{k,\ell})}{N_k}\bigg)^2
      - \frac{{\rm Var}(\nu_{k,\ell,1})}{N_k-1},\qquad k,\ell\in E,
   \end{equation}
   provided that $N_k>1$.
\end{lemma}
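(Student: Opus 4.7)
The proof is a direct computation that rests entirely on two facts: the joint exchangeability assumption (A), which makes all $\nu_{k,\ell,i}$ ($i\in[N_k]$) identically distributed and all ordered pairs $(\nu_{k,\ell,i},\nu_{k,\ell,j})$ with $i\ne j$ identically distributed, and the defining identity $N_{k,\ell}=\sum_{i=1}^{N_k}\nu_{k,\ell,i}$.

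For (\ref{mean1}), the plan is to take expectation in $N_{k,\ell}=\sum_{i=1}^{N_k}\nu_{k,\ell,i}$ and to use (A) to replace each $\me(\nu_{k,\ell,i})$ by the common value $\me(\nu_{k,\ell,1})$, giving $\me(N_{k,\ell})=N_k\me(\nu_{k,\ell,1})$, from which (\ref{mean1}) follows by division.

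For (\ref{mean2}), I would square the sum and take expectation,
\[
\me(N_{k,\ell}^2)\ =\ \sum_{i=1}^{N_k}\me(\nu_{k,\ell,i}^2)
+\sum_{\substack{i,j=1\\ i\ne j}}^{N_k}\me(\nu_{k,\ell,i}\nu_{k,\ell,j}),
\]
then apply (A) to reduce the right-hand side to $N_k\me(\nu_{k,\ell,1}^2)+(N_k)_2\me(\nu_{k,\ell,1}\nu_{k,\ell,2})$. Solving for $\me(\nu_{k,\ell,1}\nu_{k,\ell,2})$ yields
\[
\me(\nu_{k,\ell,1}\nu_{k,\ell,2})\ =\ \frac{\me(N_{k,\ell}^2)-N_k\me(\nu_{k,\ell,1}^2)}{(N_k)_2}.
\]
It remains to rewrite the numerator in terms of variances. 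Using $\me(N_{k,\ell}^2)=\mathrm{Var}(N_{k,\ell})+\me(N_{k,\ell})^2$ and $\me(\nu_{k,\ell,1}^2)=\mathrm{Var}(\nu_{k,\ell,1})+(\me(N_{k,\ell})/N_k)^2$ (by (\ref{mean1})), the numerator becomes
\[
\mathrm{Var}(N_{k,\ell})+\me(N_{k,\ell})^2\frac{N_k-1}{N_k}-N_k\mathrm{Var}(\nu_{k,\ell,1}).
\]
Division by $(N_k)_2=N_k(N_k-1)$ then gives (\ref{mean2}) after collecting terms.

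There is no real obstacle here: once (A) is invoked to pass from sums over $i$ to single representative random variables, the rest is purely algebraic bookkeeping with the identities $\mathrm{Var}(X)=\me(X^2)-\me(X)^2$. The only mild care required is verifying that the $\me(N_{k,\ell})^2$ contributions from $\me(N_{k,\ell}^2)$ and from $N_k\me(\nu_{k,\ell,1}^2)$ combine (with the factor $(N_k-1)/N_k$) to reproduce exactly the $(\me(N_{k,\ell})/N_k)^2$ term in the stated formula.
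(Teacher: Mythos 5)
Your proposal is correct and follows essentially the same route as the paper's proof: take expectations of $N_{k,\ell}=\sum_i\nu_{k,\ell,i}$ and of its square, use (A) to reduce to $\me(\nu_{k,\ell,1})$, $\me(\nu_{k,\ell,1}^2)$ and $\me(\nu_{k,\ell,1}\nu_{k,\ell,2})$, solve for the mixed moment, and rewrite via the variance identities. The only difference is cosmetic bookkeeping in how the numerator is rearranged before dividing by $(N_k)_2$.
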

\begin{proof}
   Fix $k,\ell\in E$. Eq.~(\ref{mean1}) follows from $\me(N_{k,\ell})=\me(\sum_{i=1}^{N_k}\nu_{k,\ell,i})=N_k\me(\nu_{k,\ell,1})$. Moreover, $\me(N_{k,\ell}^2)=\me(\sum_{i=1}^{N_k}\nu_{k,\ell,i}\sum_{j=1}^{N_k}\nu_{k,\ell,j})=N_k\me((\nu_{k,\ell,1})^2)+(N_k)_2\me(\nu_{k,\ell,1}\nu_{k,\ell,2})$. Assume now that $N_k>1$. Solving for $\me(\nu_{k,\ell,1}\nu_{k,\ell,2})$ yields
   \begin{eqnarray*}
      &   & \hspace{-11mm}\me(\nu_{k,\ell,1}\nu_{k,\ell,2})
      \ = \ \frac{\me(N_{k,\ell}^2)-N_k\me(\nu_{k,\ell,1}^2)}
      {(N_k)_2}
      \ = \ \frac{\me(N_{k,\ell}^2)-N_k(\me(\nu_{k,\ell,1}))^2-N_k{\rm Var}(\nu_{k,\ell,1})}{(N_k)_2}\\
      & = & \frac{\me(N_{k,\ell}^2) -N_k(\frac{\me(N_{k,\ell})}{N_k})^2}{(N_k)_2}
             -\frac{{\rm Var}(\nu_{k,\ell,1})}{N_k-1}
      \ = \ \frac{{\rm Var}(N_{k,\ell})}{(N_k)_2}+\bigg(\frac{\me(N_{k,\ell})}{N_k}\bigg)^2
               - \frac{{\rm Var}(\nu_{k,\ell,1})}{N_k-1},
   \end{eqnarray*}
   which is (\ref{mean2}).
\end{proof}
The following lemma provides upper bounds for the coalescence probabilities defined in (\ref{coal1}) and (\ref{coal2}), respectively.
\begin{lemma} \label{coalbounds}
   For all $k,\ell\in E$,
   \[
   c_{k,\ell}(N_k,N_\ell)
   \ \le\ \frac{\me((N_{k,\ell})_2)}{(N_\ell)_2}
   \ \le\ \frac{\me(N_{k,\ell})}{N_\ell}
   \]
   and, for all $k,\ell_1,\ell_2\in E$ with $\ell_1\ne\ell_2$,
   \[
   c_{k,\ell_1,\ell_2}(N_k,N_{\ell_1},N_{\ell_2})
   \ \le\ \frac{\me(N_{k,\ell_1}N_{k,\ell_2})}{N_{\ell_1}N_{\ell_2}}
   \ \le\ \min\bigg(
   \frac{\me(N_{k,\ell_1})}{N_{\ell_1}},\frac{\me(N_{k,\ell_2})}{N_{\ell_2}}
   \bigg).
   \]
\end{lemma}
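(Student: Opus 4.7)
The plan is to reduce each coalescence probability to an expectation of a product of offspring variables $\nu_{k,\ell,i}$ and then compare this to the corresponding moment of the aggregate variable $N_{k,\ell} = \sum_{i=1}^{N_k}\nu_{k,\ell,i}$, exploiting the fact that the aggregate moment naturally decomposes into a ``diagonal'' piece (same index $i$) and a nonnegative ``off-diagonal'' piece. Throughout the proof I will use assumption (A), which guarantees that the various expectations appearing in the expansions are symmetric in the offspring indices.

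For the first chain of inequalities, I would start from the identity
\[
   (N_{k,\ell})_2
   \ =\ N_{k,\ell}(N_{k,\ell}-1)
   \ =\ \sum_{i=1}^{N_k}(\nu_{k,\ell,i})_2 + \sum_{\substack{i,j\in[N_k]\\ i\ne j}}\nu_{k,\ell,i}\nu_{k,\ell,j},
\]
take expectations and apply (A) to obtain $\me((N_{k,\ell})_2) = N_k\me((\nu_{k,\ell,1})_2) + (N_k)_2\me(\nu_{k,\ell,1}\nu_{k,\ell,2})$. Since the second term is nonnegative, $N_k\me((\nu_{k,\ell,1})_2)\le \me((N_{k,\ell})_2)$, and dividing by $(N_\ell)_2$ yields the first inequality after recalling the definition (\ref{coal1}) of $c_{k,\ell}(N_k,N_\ell)$. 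For the second inequality, I would use the pointwise bound $N_{k,\ell}\le N_\ell$ (a consequence of (\ref{size1})) to conclude $(N_{k,\ell})_2\le N_{k,\ell}(N_\ell-1)$, take expectations, and divide by $(N_\ell)_2 = N_\ell(N_\ell-1)$.

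For the off-diagonal bounds, I would proceed analogously, starting from
\[
   N_{k,\ell_1}N_{k,\ell_2}
   \ =\ \sum_{i=1}^{N_k}\nu_{k,\ell_1,i}\nu_{k,\ell_2,i}
   + \sum_{\substack{i,j\in[N_k]\\ i\ne j}}\nu_{k,\ell_1,i}\nu_{k,\ell_2,j},
\]
taking expectations and applying (A) to obtain $\me(N_{k,\ell_1}N_{k,\ell_2}) = N_k\me(\nu_{k,\ell_1,1}\nu_{k,\ell_2,1}) + (N_k)_2\me(\nu_{k,\ell_1,1}\nu_{k,\ell_2,2})$. Dropping the nonnegative second summand and dividing by $N_{\ell_1}N_{\ell_2}$ gives the first inequality via (\ref{coal2}). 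The final inequality follows from two applications of the pointwise bound $N_{k,\ell_i}\le N_{\ell_i}$: bounding $N_{k,\ell_1}\le N_{\ell_1}$ yields the upper bound $\me(N_{k,\ell_2})/N_{\ell_2}$, and bounding $N_{k,\ell_2}\le N_{\ell_2}$ yields $\me(N_{k,\ell_1})/N_{\ell_1}$, so taking the minimum completes the argument.

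There is no serious obstacle here; the proof is a matter of expanding aggregates into sums of offspring variables, separating diagonal from off-diagonal contributions, and invoking (A) together with the sub-population size constraint (\ref{size1}). The only mild care is in choosing the right factorization ($(N_{k,\ell})_2$ versus $N_{k,\ell}^2$) so that the diagonal part reproduces exactly the quantity appearing in the coalescence probability.
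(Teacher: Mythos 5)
Your proposal is correct and is essentially the paper's own argument: the key inequalities are, in both cases, the nonnegativity of the cross terms $\sum_{i\ne j}\nu_{k,\ell,i}\nu_{k,\ell,j}$ (resp.\ $\sum_{i\ne j}\nu_{k,\ell_1,i}\nu_{k,\ell_2,j}$) and the pointwise bound $N_{k,\ell}\le N_\ell$ from (\ref{size1}). The paper phrases the first step as the pointwise majorization $\nu_{k,\ell,i}-1\le N_{k,\ell}-1$ (resp.\ $\nu_{k,\ell_2,i}\le N_{k,\ell_2}$) inside the expectation rather than via your explicit diagonal/off-diagonal decomposition, but the two computations are identical.
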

\begin{proof}
   For all $k,\ell\in E$, by (\ref{coal1}),
   \begin{eqnarray*}
      c_{k,\ell}(N_k,N_\ell)
      & = & \frac{1}{(N_\ell)_2}
            \me\bigg(
               \sum_{i=1}^{N_k}\nu_{k,\ell,i}(\nu_{k,\ell,i}-1)
            \bigg)
      \ \le \ \frac{1}{(N_\ell)_2}
            \me\bigg(
               \sum_{i=1}^{N_k}\nu_{k,\ell,i}(N_{k,\ell}-1)
            \bigg)\\
      & = & \frac{\me((N_{k,\ell})_2)}{(N_\ell)_2}
      \ \le \ \frac{\me(N_{k,\ell}(N_\ell-1))}{(N_\ell)_2}
      \ =\ \frac{\me(N_{k,\ell})}{N_\ell}.
   \end{eqnarray*}
   Similarly, for all $k,\ell_1,\ell_2\in E$ with $\ell_1\ne\ell_2$, by (\ref{coal2}),
   \begin{eqnarray*}
      c_{k,\ell_1,\ell_2}(N_k,N_{\ell_1},N_{\ell_2})
      & = & \frac{1}{N_{\ell_1}N_{\ell_2}}
         \me\bigg(\sum_{i=1}^{N_k}\nu_{k,\ell_1,i}\nu_{k,\ell_2,i}\bigg)\\
      & \le & \frac{1}{N_{\ell_1}N_{\ell_2}}
           \me\bigg(\sum_{i=1}^{N_k}\nu_{k,\ell_1,i}N_{k,\ell_2}\bigg)
      \ = \ \frac{\me(N_{k,\ell_1}N_{k,\ell_2})}{N_{\ell_1}N_{\ell_2}}.
   \end{eqnarray*}
   The last inequality follows from $N_{k,\ell_1}/N_{\ell_1}\le 1$ and $N_{k,\ell_2}/N_{\ell_2}\le 1$.
\end{proof}

\subsubsection{Block structure of the backward transition matrix} \label{structure}
If the states $\pi$ of ${\cal P}_{n,E}$ are ordered with respect to the number $|\pi|$ of blocks of $\pi$, then the transition matrix $P$ of the ancestral process $({\cal A}_r^{(n,N)})_{r\in\nz_0}$ is a left lower triangular block matrix of the form $P=(P_{i,j})_{1\le i,j\le n}$, where each $P_{i,j}$ is a $(d^iS(n,i)\times d^jS(n,j))$-matrix with $P_{i,j}=0$ for $i<j$, where $S(.,.)$ denote the Stirling numbers of the second kind. For all $i\ge j$ the block matrix $P_{i,j}$ contains all the transition probabilities of transitions from states $\pi$ having $i$ blocks to states $\pi'$ having $j$ blocks. In the following this block structure is provided in detail for sample sizes $n=1$ and $n=2$. It is also assumed that $d:=|E|<\infty$ for simplicity.
For $n=1$ the state space ${\cal P}_{1,E}$ of the ancestral process has size $|{\cal P}_{1,E}|=d$ and consists of all labeled partitions of the form $\pi_k:=\{(\{1\},k)\}$, $k\in E$. By (\ref{rtrans2}), $p_{\pi_k,\pi_\ell}=\me(N_{\ell,k})/N_k$, $k,\ell\in E$. Thus, for $n=1$ the transition matrix $P=(p_{\pi,\pi'})_{\pi,\pi'\in{\cal P}_{1,E}}$ of the ancestral process coincides with the mean backward mutation matrix $M:=(m_{k,\ell})_{k,\ell\in E}$, where $m_{k,\ell}:=\me(N_{\ell,k})/N_k$ is the mean backward mutation probability, i.e. the mean proportion of the individuals in subpopulation $k$ after the reproduction step, who were born in subpopulation $\ell$.

For $n=2$,
$P=\left(
    \begin{array}{cc}
       M & 0\\
       C & D\\
    \end{array}
\right)$
where $M=(m_{k,\ell})_{k,\ell\in E}$ is again the mean backward mutation matrix, $C$ contains the coalescence probabilities and $D$ the remaining transition probabilities $p_{\pi,\pi'}$ for all states $\pi$ and $\pi'$ having two blocks, which can be calculated using (\ref{rtrans2}). For illustration, for $E:=\{1,2\}$, and the state space ordered by
\[
\pi_1\ :=\ \{(\{1,2\},1)\},\qquad\pi_2\ :=\ \{(\{1,2\},2)\},
\]
and
\[
\pi_3\ :=\ \{(\{1\},1),(\{2\},1)\},\qquad\pi_4\ :=\ \{(\{1\},1),(\{2\},2)\},
\]
\[
\pi_5\ :=\ \{(\{1\},2),(\{2\},1)\},\qquad\pi_6\ :=\ \{(\{1\},2),(\{2\},2)\},
\]
the transition matrix $P=(p_{\pi_i,\pi_j})_{1\le i,j\le 6}$ of the ancestral process is provided in Figure \ref{figure3}.
\begin{landscape}
\begin{figure}[p] 
   \caption[]{Transition matrix $P$ of the ancestral process for sample size $n=2$ and $E=\{1,2\}$}
   \centering
   \[
   P\ =\
   \left(
   \begin{array}{cc|cccc}
      \me(\nu_{1,1,1}) & \frac{N_2\me(\nu_{2,1,1})}{N_1} & 0 & 0 & 0 & 0\\
      \frac{N_1\me(\nu_{1,2,1})}{N_2} & \me(\nu_{2,2,1}) & 0 & 0 & 0 & 0\\
      \hline
      \frac{\me((\nu_{1,1,1})_2)}{N_1-1} & \frac{N_2\me((\nu_{2,1,1})_2)}{(N_1)_2} & \me(\nu_{1,1,1}\nu_{1,1,2}) & \frac{N_2\me(\nu_{1,1,1}\nu_{2,1,1})}{N_1-1} & \frac{N_2\me(\nu_{1,1,1}\nu_{2,1,1})}{N_1-1} & \frac{(N_2)_2\me(\nu_{2,1,1}\nu_{2,1,2})}{(N_1)_2}\\
      \frac{\me(\nu_{1,1,1}\nu_{1,2,1})}{N_2} & \frac{\me(\nu_{2,1,1}\nu_{2,2,1})}{N_1} & \frac{N_1-1}{N_2}\me(\nu_{1,1,1}\nu_{1,2,1}) & \me(\nu_{1,1,1}\nu_{2,2,1}) & \me(\nu_{1,2,1}\nu_{2,1,1}) & \frac{N_2-1}{N_1}\me(\nu_{2,1,1}\nu_{2,2,2})\\
      \frac{\me(\nu_{1,1,1}\nu_{1,2,1})}{N_2} & \frac{\me(\nu_{2,1,1}\nu_{2,2,1})}{N_1} & \frac{N_1-1}{N_2}\me(\nu_{1,1,2}\nu_{1,2,1}) & \me(\nu_{1,2,1}\nu_{2,1,1}) & \me(\nu_{1,1,1}\nu_{2,2,1}) & \frac{N_2-1}{N_1}\me(\nu_{2,1,1}\nu_{2,2,1})\\
      \frac{N_1\me((\nu_{1,2,1})_2)}{(N_2)_2} & \frac{\me((\nu_{2,2,1})_2)}{N_2-1} & \frac{(N_1)_2}{(N_2)_2}\me(\nu_{1,2,1}\nu_{1,2,2}) & \frac{N_1}{N_2-1}\me(\nu_{1,2,1}\nu_{2,2,1}) & \frac{N_1}{N_2-1}\me(\nu_{1,2,1}\nu_{2,2,1}) & \me(\nu_{2,2,1}\nu_{2,2,2})\\
   \end{array}
   \right)
   \]
   \label{figure3}
\end{figure}
\end{landscape}
\subsubsection{Multi-type exchangeable partition probability function} \label{MEPPF}
Let $E$ be some at most countable set. We call $E$ the type space. For $j=(j_k)_{k\in E}\in\nz_0^E$ let ${\cal T}_j$ denote the set of all tensors $T=(t_{k,\ell})_{k,\ell\in E}$ with vector entries $t_{k,\ell}\in\nz_0^{j_k}$ for all $k,\ell\in E$. Note that if $j_k=0$ then $t_{k,\ell}=()=:\mathbf{0}\in\rz^0$ is the empty vector for all $\ell\in E$. Furthermore, define ${\cal T}:=\bigcup_{j\in\nz_0^E}{\cal T}_j$. The particular (and only) tensor in ${\cal T}_0$ with all its entries $t_{k,\ell}=()$ being the empty vector is denoted by $T_0$. The following definitions are inspired from the properties of the transition functions $\Phi_j$, $j\in\nz_0^E$, of multi-type Cannings models studied in Section \ref{consistency}.
\begin{definition}[normalization]
   A function $p:{\cal T}\to[0,1]$ is called normalized, if $p(T_0)=1$.
\end{definition}
\begin{definition}[consistency]
   A function $p:{\cal T}\to[0,1]$ is called consistent, if for all $j=(j_k)_{k\in E}\in\nz_0^E$ and all tensors $T\in{\cal T}_j$ the equality
   \begin{equation} \label{con}
      p(T)\ =\ \sum_{k\in E} p(T(k,\ell))
      + \sum_{k\in E}\sum_{s=1}^{j_k}p(T(k,\ell,s))
   \end{equation}
   holds for each $\ell\in E$, where the tensor $T(k,\ell)$ is obtained from $T=(t_{k,\ell})_{k,\ell\in E}$ by replacing the (possibly empty) vector $t_{k,\ell}=(i_{k,\ell,1},\ldots,i_{k,\ell,j_k})$ by $(i_{k,\ell,1},\ldots,i_{k,\ell,j_k},1)$ and the (possibly empty) vector $t_{k,\ell'}=(i_{k,\ell',1},\ldots,i_{k,\ell',j_k})$ by $(i_{k,\ell',1},\ldots,i_{k,\ell',j_k},0)$ for all $\ell'\ne\ell$, and the vector $T(k,\ell,s)$ is obtained from $T$ by replacing the single entry $i_{k,\ell,s}$ by $i_{k,\ell,s}+1$.
\end{definition}
\begin{definition}[symmetry]
   A function $p:{\cal T}\to[0,1]$ is called symmetric, if for all $j=(j_k)_{k\in E}\in\nz_0^E$, all tensors $T=(t_{k,\ell})_{k,\ell\in E}\in{\cal T}_j$ and all permutations $\sigma_{k,\ell}\in S_{j_k}$, $k,\ell\in E$, the symmetry relation
   \begin{equation} \label{sym}
      p(T)\ =\ p(\sigma(T))
   \end{equation}
   holds, where $\sigma(T):=(\sigma_{k,\ell}t_{k,\ell})_{k,\ell\in E}$.
\end{definition}
A function $p:{\cal T}\to[0,1]$ is called a \emph{multi-type partition probability function} (M-PPF), if $p$ is normalized and consistent. If $p$ is in addition symmetric, then $p$ is called a \emph{multi-type exchangeable partition probability function} (M-EPPF). For the single-type case $|E|=1$, this terminology is in agreement with the notion of the exchangeable partition probability function (EPPF) of Pitman \cite{Pitman1995}.




\bibliographystyle{plain}
%


\footnotesize

\end{document}